\documentclass[a4paper,11pt,reqno,smallextended]{amsart} 


\usepackage{latexsym}
\usepackage{amssymb,amsmath,amsopn}
\usepackage{graphics} 

\usepackage{url}



\newtheorem{theorem}{Theorem}[section]
\newtheorem{lemma}[theorem]{Lemma}

\newtheorem{corollary}[theorem]{Corollary}

\newtheorem{proposition}[theorem]{Proposition}

\theoremstyle{definition}
\newtheorem{definition}[theorem]{Definition}

\newtheorem*{example}{Example}

\newcommand{\scup}{\hskip0.5pt\cup\hskip0.5pt}

\DeclareMathOperator{\GL}{GL}
\DeclareMathOperator{\SL}{SL}
\DeclareMathOperator{\sgn}{sgn}

\DeclareMathOperator{\Sym}{Sym}
\DeclareMathOperator{\Inf}{Inf}

\renewcommand{\theta}{\vartheta}

\newcommand{\N}{\mathbb{N}}

\newcommand{\Q}{\mathbb{Q}}

\newcommand{\Ind}{\big\uparrow}
\newcommand{\Res}{\big\downarrow}
\newcommand{\ind}{\!\!\uparrow}






\linespread{1.19}

\newcommand{\oa}{1_{\raisebox{0pt}{$\scriptscriptstyle 1$}}}
\newcommand{\ob}{1_{\raisebox{0pt}{$\scriptscriptstyle 2$}}}
\newcommand{\oc}{1_{\raisebox{0pt}{$\scriptscriptstyle 3$}}}
\newcommand{\od}{1_{\raisebox{0pt}{$\scriptscriptstyle 4$}}}

\newcommand{\pa}{2_{\raisebox{0pt}{$\scriptscriptstyle 1$}}}
\newcommand{\pb}{2_{\raisebox{0pt}{$\scriptscriptstyle 2$}}}

\newcommand{\qa}{3_{\raisebox{0pt}{$\scriptscriptstyle 1$}}}

\newcommand{\ra}{4_{\raisebox{0pt}{$\scriptscriptstyle 1$}}}

\newcommand{\multibinom}[2]{\left(\!\middle(\genfrac{}{}{0pt}{}{#1}{#2}\middle)\!\right)}


\newcounter{thmlistcnt}
\newenvironment{thmlist}%
	{\setcounter{thmlistcnt}{0}%
	\begin{list}{\emph{(\roman{thmlistcnt})}}{%
		\usecounter{thmlistcnt}%
		\setlength{\topsep}{0pt}%
		\setlength{\leftmargin}{17pt}%
		\setlength{\itemsep}{0pt}%
		\setlength{\labelwidth}{12pt}
		\setlength{\itemindent}{0pt}}%
	}%
	{\end{list}}%
	
\newcounter{defnlistcnt}
\newenvironment{defnlist}%
	{\setcounter{defnlistcnt}{0}%
	\begin{list}{(\roman{defnlistcnt})}{%
		\usecounter{defnlistcnt}%
		\setlength{\topsep}{0pt}%
		\setlength{\leftmargin}{18.5pt}%
		\setlength{\itemsep}{0pt}%
		\setlength{\labelwidth}{18.5pt}
		\setlength{\itemindent}{0pt}}%
	}%
	{\end{list}}%

\let\oldotimes\otimes
\renewcommand{\otimes}{{}\oldotimes{}} 
\newcommand{\cotimes}{\!\otimes}       

\subjclass[2010]{20C30; Secondary: 20C15, 05E05}
\keywords{twisted Foulkes module, Specht module, module homomorphism, set family, multiset family}

\begin{document}

\thispagestyle{empty}

\title[Constituents of Foulkes characters]{Minimal and maximal constituents of twisted Foulkes characters}

\author{Rowena Paget and Mark Wildon}

\date{September 2014}
\maketitle

\begin{abstract}
We prove combinatorial rules that give the minimal and maximal partitions labelling the irreducible constituents
of a family of characters for the symmetric group that generalize Foulkes permutation characters. 
Restated in the language of symmetric functions, our results determine all minimal and maximal
partitions that label Schur functions appearing in the plethysms $s_\nu \circ s_{(m)}$. As a corollary we prove
two conjectures of Agaoka on the lexicographically least 
constituents of the plethysms $s_\nu \circ s_{(m)}$ and
$s_\nu \circ s_{(1^m)}$.
\end{abstract}

\thispagestyle{empty}

\section{Introduction}\label{sec:intro}

Fix $m$, $n \in \N$ and let $S_m \wr S_n \le S_{mn}$ be the transitive imprimitive wreath product of the 
symmetric groups $S_m$ and $S_n$. 
The \emph{Foulkes character} $\phi^{(m^n)}$ is the permutation character arising from the action of 
$S_{mn}$ 
on the cosets of \hbox{$S_m \wr S_n$}. 
Finding the decomposition of $\phi^{(m^n)}$ into irreducible characters of $S_{mn}$ is a long-standing open
problem that spans representation theory and algebraic combinatorics; a solution to this problem
would also solve Foulkes' Conjecture (see \cite[end~\S 1]{Foulkes}). 
Equivalently, one may ask for the decomposition of $\Sym^n(\Sym^m \! E)$ into irreducible $\GL(E)$-modules, where~$E$ is a finite-dimensional 
rational vector space, or, taking formal characters, for the decomposition of
the plethysm $s_{(n)} \circ s_{(m)}$ as an integral linear combination of Schur functions. 
The problem of
finding a clearly positive combinatorial rule for these coefficients was identified by Stanley in 
Problem 9 of~\cite{StanleyPositivity}
as one of the key open problems in algebraic combinatorics. 
We survey the existing results in Section~\ref{sec:background} below.

In this paper we study a generalization of Foulkes characters.
Let $\nu$ be a partition of $n$. 
Let $\Inf_{S_n}^{S_m \wr S_n} \chi^\nu$ denote
the character of $S_m \wr S_n$ inflated from the irreducible character of $\chi^\nu$
of $S_n$ using the canonical quotient map
$S_m \wr S_n \rightarrow S_n$. Let 
\[ \phi^{(m^n)}_\nu = 
(\Inf_{S_n}^{S_m \wr S_n} \!\chi^\nu)\Ind_{S_m \wr S_n}^{S_{mn}}.\]
We call these characters \emph{twisted
Foulkes characters}. The corresponding polynomial representation of $\mathrm{GL}(E)$ is
$\mathrm{Sym}^\nu\bigl( \mathrm{Sym}^m E \bigr)$, and the corresponding plethysm is $s_\nu \circ s_{(m)}$.

The two main results of this paper give combinatorial rules that determine the minimal partitions and the maximal partitions in the dominance order that label the irreducible constituents of these characters. 
As a corollary, we prove two conjectures of Agaoka \cite{Agaoka} on the lexicographically least
constituents
of the plethysms $s_\nu \circ s_{(m)}$ and $s_\nu \circ s_{(1^m)}$.

To state our main results we need the following  definitions. Let $\lambda'$ denote
the conjugate partition to a partition $\lambda$ and let $\unlhd$ denote the dominance order
on partitions.

\begin{definition}{\ }
\begin{defnlist}
\item  
 A \emph{set family} $\mathcal{P}$ of \emph{shape} $(m^n)$ is a collection of $n$ distinct $m$-subsets of~$\N$. The \emph{type} of the set family $\mathcal{P}$, if defined, is the partition $\lambda$ such
 that the number of sets in $\mathcal{P}$ that contain $i$ is $\lambda_i'$.

\item 
Let $\mathcal{P}_1, \ldots \mathcal{P}_c$ be set families. Then $(\mathcal{P}_1,\ldots,\mathcal{P}_c)$ is called a \emph{set family tuple}. The \emph{type} of the  set family tuple $(\mathcal{P}_1,\ldots,\mathcal{P}_c)$,  if defined, is the partition $\lambda$ 
such that the total number of sets in the set families $\mathcal{P}_1, \ldots, \mathcal{P}_c$
that contain $i$ is $\lambda_i'$.
\end{defnlist}
\end{definition}

Not all set family tuples possess a type, but we shall be primarily concerned with those that do. 
A set family $\mathcal{P}$ of type $\lambda$ is \emph{minimal} if there is no set family $\mathcal{R}$ 
of type $\mu \lhd \lambda$ that has the same shape as $\mathcal{P}$. 
A set family tuple $(\mathcal{P}_1,\ldots,\mathcal{P}_c)$ of type $\lambda$ is called \emph{minimal} if there is no set family tuple $(\mathcal{R}_1,\ldots,\mathcal{R}_c)$ 
of type $\mu \lhd \lambda$ such that 
each $\mathcal{R}_i$ has the same shape as~$\mathcal{P}_i$.  

We now make a similar definition replacing sets by multisets.

\begin{definition}{\ }
\begin{defnlist}
\item 
 A \emph{multiset family} $\mathcal{Q}$ of \emph{shape} $(m^n)$ is a collection of $n$ distinct multisets each of cardinality $m$ 
 having elements in $\N$. The \emph{type} of the multiset family $\mathcal{Q}$, if defined, is the partition $\lambda$ such
 that $\lambda_i'$ is the total number of occurrences of $i$ in the multisets contained in     
 $\mathcal{Q}$.
\item 
Let $\mathcal{Q}_1, \ldots \mathcal{Q}_c$ be multiset families. Then $(\mathcal{Q}_1,\ldots,\mathcal{Q}_c)$ is called a \emph{multiset family tuple}. The \emph{type} of the multiset family tuple  $(\mathcal{Q}_1,\ldots,\mathcal{Q}_c)$,  if defined, is the partition $\lambda$ 
such that $\lambda_i'$ is the total number of occurrences of $i$ in the multisets contained in     
 $\mathcal{Q}_1, \ldots, \mathcal{Q}_c$.
\end{defnlist}
\end{definition}
Minimal multiset family tuples are then defined in the same way as  minimal set family tuples.

Given a character $\psi$ of $S_{r}$ and a partition $\lambda$ of $r \in \N$, we say
that $\chi^\lambda$ is a \emph{minimal constituent} of $\psi$ if $\langle \psi,
\chi^\lambda\rangle \ge 1$, and $\lambda$ is minimal in the dominance order on partitions of $r$
with this property. The definition of \emph{maximal constituent} is analogous.

Our two main results are as follows.

\begin{theorem}\label{thm:min}
Let $\nu$ be a partition of $n$ and let $\lambda$ be a partition of $mn$.  Set $\kappa = \nu$ if $m$ is even and $\kappa = \nu'$ 
if $m$ is odd. Let $k$ be the first part of~$\kappa$.
Then $\chi^\lambda$ is a minimal constituent
of $\phi_\nu^{(m^n)}$ if and only if there is a minimal set family tuple $(\mathcal{P}_1, \ldots, \mathcal{P}_k)$ of type $\lambda$ such that
each $\mathcal{P}_j$ has shape~$(m^{\kappa_j'})$.
\end{theorem}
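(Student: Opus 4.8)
The plan is to leave the world of characters for that of symmetric functions, use the ring involution $\omega$ to turn the assertion about minimal constituents into one about maximal constituents, and then recognize the resulting \emph{leading} Schur function as a generating function for set family tuples. Let $\mathrm{ch}$ denote the characteristic isometry, so that $\mathrm{ch}(\chi^\lambda)=s_\lambda$, $\mathrm{ch}(\phi^{(m^n)}_\nu)=s_\nu\circ s_{(m)}$ and $\mathrm{ch}(\psi\otimes\sgn)=\omega\,\mathrm{ch}(\psi)$. Since conjugating partitions reverses the dominance order, $\chi^\lambda$ is a minimal constituent of $\phi^{(m^n)}_\nu$ if and only if $\lambda'$ is dominance-maximal among the partitions $\mu$ with $\langle \omega(s_\nu\circ s_{(m)}),\,s_\mu\rangle \neq 0$. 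Applying the identity $\omega(f\circ g)=(\omega^{\deg g}f)\circ(\omega g)$ for homogeneous $g$ — where $\omega^{\deg g}$ is $\omega$ if $\deg g$ is odd and the identity if it is even — together with $\omega s_{(m)}=s_{(1^m)}=e_m$, gives $\omega(s_\nu\circ s_{(m)})=s_\kappa\circ e_m$ with $\kappa$ exactly the partition in the statement; this is where the parity of $m$, and hence the passage from $\nu$ to $\nu'$, enters. It therefore remains to prove that the dominance-maximal Schur constituents of $s_\kappa\circ e_m$ are precisely the conjugates of the types of the minimal set family tuples of shapes $(m^{\kappa_1'}),\dots,(m^{\kappa_k'})$.

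The next step is to identify the combinatorial side with a plethysm. In variables $x_1,x_2,\dots$ assign to an $m$-set $S\subseteq\N$ the squarefree monomial $x_S=\prod_{i\in S}x_i$; the monomials of $e_m$ are exactly these $x_S$, each occurring once, so by definition of plethysm $e_c\circ e_m$ is $e_c$ evaluated on the alphabet $\{x_S:|S|=m\}$. Hence the coefficient in $e_c\circ e_m$ of $x_1^{\mu_1'}x_2^{\mu_2'}\cdots$ counts the set families of shape $(m^c)$ and type $\mu$. Because $f\mapsto f\circ e_m$ is a ring homomorphism and $\kappa'=(\kappa_1',\dots,\kappa_k')$, one has $\prod_{j=1}^{k}(e_{\kappa_j'}\circ e_m)=e_{\kappa'}\circ e_m$, and the coefficient of $x_1^{\mu_1'}x_2^{\mu_2'}\cdots$ here counts the set family tuples of type $\mu$ whose $j$-th part has shape $(m^{\kappa_j'})$. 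Consequently a minimal such tuple has type $\lambda$ exactly when $\lambda'$ is dominance-maximal among the exponent partitions occurring in $e_{\kappa'}\circ e_m$; and since $s_\mu = m_\mu+\sum_{\rho\lhd\mu}K_{\mu\rho}m_\rho$ is unitriangular, this happens exactly when $\lambda'$ is a dominance-maximal Schur constituent of $e_{\kappa'}\circ e_m$. (A single set family suffices precisely when $\kappa$ has one column — for instance the ordinary Foulkes character with $m$ odd — and the tuple, indexed by the $k=\kappa_1$ columns of $\kappa$, is forced by this column decomposition.)

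Combining the two steps, the theorem reduces to the single assertion
\[
\{\text{dominance-maximal Schur constituents of }s_\kappa\circ e_m\} = \{\text{dominance-maximal Schur constituents of }e_{\kappa'}\circ e_m\}.
\]
The dual Jacobi--Trudi expansion $e_{\kappa'}=s_\kappa+\sum_{\rho\lhd\kappa}K_{\rho'\kappa'}s_\rho$ has nonnegative coefficients, so $e_{\kappa'}\circ e_m = s_\kappa\circ e_m+\sum_{\rho\lhd\kappa}K_{\rho'\kappa'}(s_\rho\circ e_m)$ is Schur-positive term by term; hence every maximal constituent of $s_\kappa\circ e_m$ is dominated by one of $e_{\kappa'}\circ e_m$, and the two sets coincide as soon as it is shown that for every $\rho\lhd\kappa$ each dominance-maximal constituent of $s_\rho\circ e_m$ is dominated by a constituent of $s_\kappa\circ e_m$. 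This monotonicity of the dominance-maximal constituents of $s_\bullet\circ e_m$ in the outer partition is the crux, and I expect it to be the main obstacle: it asks for rather precise control of the leading term of a plethysm. The natural way to secure it is to prove first the paper's companion result, which describes the maximal constituents of $\phi^{(m^n)}_\nu$ — equivalently of $s_\kappa\circ e_m$ — explicitly in terms of multiset families, the required comparison for $\rho\lhd\kappa$ then being visible from that description.

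An alternative, more representation-theoretic route, suggested by the prominence of set families, would compute $\Hom_{S_{mn}}(S^\lambda, H^{(m^n)}_\nu)$ for dominance-minimal $\lambda$ directly: exhibit explicit homomorphisms indexed by set family tuples of the stated shapes, and prove a straightening (unitriangularity) lemma showing that, for such $\lambda$, these account for the whole Hom space and are nonzero exactly when a minimal set family tuple of type $\lambda$ exists. On this route the straightening lemma plays the part of the monotonicity above and is again the hardest step, while the symmetric-function reductions — the $\omega$-duality and the monomial count — remain routine.
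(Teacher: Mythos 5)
Your reductions are sound as far as they go: the $\omega$-twist giving $\omega(s_\nu\circ s_{(m)})=s_\kappa\circ e_m$ is the paper's Equation~\eqref{eqn:omega}, the identification of the exponent partitions of $e_{\kappa'}\circ e_m=\prod_j(e_{\kappa_j'}\circ e_m)$ with the types of set family tuples of shapes $(m^{\kappa_j'})$ is correct, and Kostka unitriangularity legitimately converts ``dominance-maximal exponent partition'' into ``dominance-maximal Schur constituent'' for Schur-positive functions. But this machinery delivers only the easy half of the theorem: since $e_{\kappa'}-s_\kappa$ is Schur-positive, every Schur constituent of $s_\kappa\circ e_m$ contributes a monomial of $e_{\kappa'}\circ e_m$, i.e.\ every constituent of $\phi^{(m^n)}_\nu$ has its type realized by some set family tuple --- this is the content of Proposition~\ref{prop:characterToSetFamily}. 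The substantive half --- that a \emph{minimal} set family tuple type, equivalently a dominance-maximal exponent partition of $e_{\kappa'}\circ e_m$, actually survives in the smaller plethysm $s_\kappa\circ e_m$ --- is exactly the ``monotonicity'' you flag and leave unproven, and nothing in your proposal supplies it. Note also that without it \emph{neither} implication of the theorem follows: even to show that a minimal constituent $\chi^\lambda$ comes from a minimal tuple, one passes to a minimal tuple of type $\mu\unlhd\lambda$ and must know that $\chi^\mu$ occurs in $\phi^{(m^n)}_\nu$ to force $\mu=\lambda$. Deferring the crux to Theorem~\ref{thm:max} is moreover circular in effect: by the sign twist \eqref{eqn:sign_twist}, the maximal constituents of $s_\rho\circ e_m$ for $\rho\lhd\kappa$ are the conjugates of the minimal constituents of another twisted Foulkes character, so the input you would need is an instance of Theorem~\ref{thm:min} itself; and your ``alternative route'' is the paper's route with its key lemma left as a black box (it also asks for more than is needed --- one does not have to exhaust the Hom space, only to produce a single nonzero homomorphism).

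The paper closes precisely this gap constructively. For each closed set family tuple it defines $f_{(\mathcal{P}_1,\ldots,\mathcal{P}_k)}:\widetilde{M}^\lambda\to P(\widetilde{M}^\nu)$ by $|t_\lambda|\mapsto|T|b_{t_\lambda}$, kills the $t_\lambda$-Garnir elements by a sign-reversing involution on $C(t_\lambda)$ that exploits closedness of the families and the parity of $m$ (Proposition~\ref{prop:evenHoms}; the odd case uses set-tabloids and rows instead, Section~\ref{sec:oddProof}), and shows the induced map $S^\lambda\to P(S^\nu)$ is nonzero by extracting the coefficient of the set-tabloid $\{T\}$ (Lemma~\ref{lemma:evenHoms}); since minimal tuples are closed (Section~\ref{subsec:closed}), this yields Corollary~\ref{cor:even} and hence the positivity your argument is missing. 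Until you prove an analogue of this construction, or an independent proof of your monotonicity statement, the proposal is a correct reframing plus an acknowledged gap at the theorem's hard core.
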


\begin{theorem}\label{thm:max}
Let $\nu$ be a partition of $n$ with first part $\ell$ and let $\lambda$ be a partition of $mn$.  
Then $\chi^\lambda$ is a maximal constituent
of $\phi_\nu^{(m^n)}$ if and only if there is a minimal multiset family tuple
$(\mathcal{Q}_1, \ldots, \mathcal{Q}_\ell)$ of type $\lambda'$ such that
each $\mathcal{Q}_j$ has shape $(m^{\nu_j'})$.
\end{theorem}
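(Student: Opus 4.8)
The plan is to reduce Theorem~\ref{thm:max} to an exterior-power analogue of Theorem~\ref{thm:min}, by passing to the sign twist. Tensoring an irreducible $\chi^\mu$ of $S_{mn}$ with $\sgn = \chi^{(1^{mn})}$ gives $\chi^{\mu'}$, and conjugation of partitions reverses the dominance order; hence $\chi^\lambda$ is a maximal constituent of $\phi^{(m^n)}_\nu$ if and only if $\chi^{\lambda'}$ is a minimal constituent of $\phi^{(m^n)}_\nu \otimes \sgn$. So it suffices to describe the minimal constituents of $\phi^{(m^n)}_\nu \otimes \sgn$ and to check that, read through $\lambda \mapsto \lambda'$, this gives the condition in Theorem~\ref{thm:max}.

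First I would identify $\phi^{(m^n)}_\nu \otimes \sgn$. An element $\bigl((\sigma_1,\dots,\sigma_n);\pi\bigr)$ of $S_m \wr S_n$ acts on the $mn$ points with sign $\sgn(\sigma_1)\cdots\sgn(\sigma_n)\,\sgn(\pi)^m$, so $\sgn\Res_{S_m \wr S_n}$ is the linear character of $S_m \wr S_n$ that is $\sgn$ on each base factor $S_m$ and $\sgn^m$ on the top group $S_n$. Using this with the tensor identity for induced characters, and that $\sgn^m_{S_n}$ is trivial for $m$ even and is $\sgn_{S_n}$ for $m$ odd while $\chi^\nu \otimes \sgn_{S_n} = \chi^{\nu'}$, one obtains
\[
  \phi^{(m^n)}_\nu \otimes \sgn = \bigl(\Inf_{S_n}^{S_m \wr S_n}\!\chi^{\kappa} \otimes \varepsilon\bigr)\Ind_{S_m \wr S_n}^{S_{mn}},
\]
where $\kappa$ is exactly as in Theorem~\ref{thm:min} ($\kappa = \nu$ for $m$ even, $\kappa = \nu'$ for $m$ odd) and $\varepsilon$ is the character of $S_m \wr S_n$ equal to $\sgn$ on each base factor and trivial on the top group. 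Equivalently, $\phi^{(m^n)}_\nu \otimes \sgn$ corresponds to the plethysm $\omega(s_\nu \circ s_{(m)}) = s_\kappa \circ s_{(1^m)}$, the analogue of a twisted Foulkes character with $\Sym^m E$ replaced by $\Lambda^m E$.

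The core of the argument is an exterior-power analogue of Theorem~\ref{thm:min}, which I would prove by running the proof of Theorem~\ref{thm:min} again with $\Sym^m E$ replaced by $\Lambda^m E$ throughout. Writing $\alpha^{\circ} = \alpha$ for $m$ even and $\alpha^{\circ} = \alpha'$ for $m$ odd, and letting $k$ be the first part of $\alpha^{\circ}$: the character $\chi^\rho$ is a minimal constituent of $\bigl(\Inf_{S_n}^{S_m \wr S_n}\chi^\alpha \otimes \varepsilon\bigr)\Ind_{S_m \wr S_n}^{S_{mn}}$ if and only if there is a minimal \emph{multiset} family tuple $(\mathcal{Q}_1,\dots,\mathcal{Q}_k)$ of type $\rho$ with each $\mathcal{Q}_j$ of shape $(m^{(\alpha^\circ_j)'})$. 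The only structural change from Theorem~\ref{thm:min} is that set families become multiset families; heuristically this is because the minimal constituents here become, after the sign twist, the maximal constituents of $s_{\alpha^{\circ}}\circ s_{(m)}$, which are governed by the leading weight $x_1^m$ of $\Sym^m E$ — a genuine multiset — whereas in Theorem~\ref{thm:min} it is the squarefree leading weight $x_1 x_2 \cdots x_m$ of $\Lambda^m E$ that yields set families. I expect this to be the main obstacle: one must redo the construction of homomorphisms from Specht modules into the exterior Foulkes module together with the matching upper bound (no multiset family tuple of strictly smaller type, at the prescribed shapes, is realised), verify that the set/multiset interchange occurs exactly as claimed, and track the two parity cases throughout.

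Finally I would assemble the pieces. Taking $\alpha = \kappa$ in the exterior analogue: if $m$ is even then $\kappa = \nu$, so $\kappa^{\circ} = \kappa = \nu$; if $m$ is odd then $\kappa = \nu'$, so $\kappa^{\circ} = \kappa' = \nu$. Thus $\kappa^{\circ} = \nu$ in all cases, $k$ equals the first part $\ell$ of $\nu$, and $(\kappa^\circ_j)' = \nu_j'$. The exterior analogue, applied to $\phi^{(m^n)}_\nu \otimes \sgn$ in the form obtained above, therefore says that $\chi^{\lambda'}$ is a minimal constituent of $\phi^{(m^n)}_\nu \otimes \sgn$ if and only if there is a minimal multiset family tuple $(\mathcal{Q}_1,\dots,\mathcal{Q}_\ell)$ of type $\lambda'$ with each $\mathcal{Q}_j$ of shape $(m^{\nu_j'})$. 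Combined with the first paragraph, this is exactly Theorem~\ref{thm:max}. What remains is routine bookkeeping: confirming that the identification of $\phi^{(m^n)}_\nu \otimes \sgn$ with the exterior twisted Foulkes character respects the notion of type, and that the shapes $(m^{\nu_j'})$ are internally consistent, since $\sum_j m\nu_j' = mn$.
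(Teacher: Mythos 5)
Your route coincides with the paper's own. The sign-twist reduction in your first and last paragraphs is exactly Equation~\eqref{eqn:sign_twist} (together with~\eqref{eqn:Specht_sign_twist}): your character $\bigl(\Inf_{S_n}^{S_m\wr S_n}\chi^{\kappa}\otimes\varepsilon\bigr)\Ind_{S_m\wr S_n}^{S_{mn}}$ is the paper's $\psi^{(m^n)}_{\kappa}$, the character of $Q(S^{\kappa})$, and your ``exterior-power analogue of Theorem~\ref{thm:min}'' is verbatim the paper's Theorem~\ref{thm:psiMin}, from which Theorem~\ref{thm:max} is deduced by precisely the bookkeeping you carry out ($\kappa^{\circ}=\nu$ in both parities, $k=\ell$, shapes $(m^{\nu_j'})$). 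All of that is correct.

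The gap is that this analogue is the entire substance of the theorem (the paper's Section~\ref{sec:psi}), and you leave it at ``run the proof of Theorem~\ref{thm:min} again with multisets''. That rerun is not mechanical, and the paper explicitly notes that new arguments are needed at exactly the points you defer. First, the non-vanishing step cannot be transcribed: for a column multiset-tableau $T$ the column stabiliser $C(t_\lambda)$ contains nontrivial elements fixing $\{T\}$ (e.g.\ the transposition of the two indices of a repeated number inside a single entry, which is odd), so the signed sum $\{T\}b_{t_\lambda}$ can lose the leading term entirely; the paper's Lemma~\ref{lemma:psiHoms} instead computes the coefficient of $v\otimes\{T\}$ as the order of the subgroup of $C(t_\lambda)$ permuting indices within the entries of $T$, and this is positive only because the sign twist makes every such element contribute $+1$. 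Second, the Garnir-element involution of Proposition~\ref{prop:evenHoms} must be redesigned, since the entry containing $(i+1)_1$ may now also contain several symbols with number $i$; the cancelling partner in Proposition~\ref{prop:multisetHoms} is built from a pair of permutations $\theta\in S_{X\cup\{(i+1)_1\}}$ and $\pi\in C(t_\lambda)$, and again it is the twist by $v$ that lets the even and odd cases be treated uniformly. The converse direction (Proposition~\ref{prop:characterToMultisetFamily}) likewise needs its own adaptation, and is what makes the tuples \emph{minimal} rather than merely existent. You correctly flag this as ``the main obstacle'', but as written the proposal asserts the key lemma rather than proving it, so the argument is incomplete at its core; supplying these three adapted steps would make it exactly the paper's proof.
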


We pause to give a small example of these theorems.
This example is continued in Section~\ref{subsec:ex}.

\begin{example}
By Theorem~\ref{thm:min} the minimal constituents of $\phi^{(2^4)}_{(2,1,1)}$ are $\chi^{(4,2,1,1)}$ and $\chi^{(3,3,2)}$, 
corresponding to the minimal set family tuples
\[ \bigl( \bigl\{ \{1,2\}, \{1,3\}, \{1,4\} \bigr\}, \bigl\{\{1,2\}\bigr\} \bigr) \text{ and}\ 
\bigl( \bigl\{ \{1,2\}, \{1,3\}, \{2,3\} \bigr\}, \bigl\{\{1,2\}\bigr\} \bigr), \]
respectively. By Theorem~\ref{thm:max} the maximal constituents of $\phi^{(2^4)}_{(2,1,1)}$ are $\chi^{(6,1,1)}$ and $\chi^{(5,3)}$,
corresponding to the minimal multiset family tuples
\[ \bigl( \bigl\{ \{1,1\}, \{1,2\}, \{1,3\} \bigr\}, \bigl\{\{1,1\}\bigr\} \bigr) \text{ and}\
\bigl( \bigl\{ \{1,1\}, \{1,2\}, \{2,2\} \bigr\}, \bigl\{\{1,1\}\bigr\} \bigr), \]
respectively. 
\end{example}

To prove Theorem~\ref{thm:min} we construct an explicit module affording the character $\phi_\nu^{(m^n)}$, using
the plethysm functor from  representations of $S_{n}$ to representations of $S_{mn}$
defined in Section~\ref{subsec:P} below. 
We then define explicit
homomorphisms from Specht modules into this module. These constructions are of independent interest.
In Section~\ref{subsec:further} we show that
our homomorphisms give irreducible characters appearing in $\phi^{(m^n)}_\nu$ beyond
those predicted by our two main theorems. 

The maximal constituents of $\phi_\nu^{(m^n)}$ are in bijection with the minimal constituents of 
$\sgn_{S_{mn}} \!\times\, \phi_\nu^{(m^n)}$. To prove Theorem~\ref{thm:max} we define explicit modules
affording these characters and determine their 
minimal constituents by adapting the arguments used to prove Theorem~\ref{thm:min}. 

The outline of this paper is as follows. The common preliminary results we need are collected in Sections~\ref{sec:plethysm}
and~\ref{sec:closed}. We give a complete proof of Theorem~\ref{thm:min} when $m$ is even in Section~\ref{sec:evenProof},
and indicate in Section~\ref{sec:oddProof} the modifications required for odd $m$. By contrast, it is possible
to prove both cases of Theorem~\ref{thm:max} in an almost uniform way: we do this in Section~\ref{sec:psi}.
We end in Section~\ref{sec:corollaries} with a number of corollaries of the main theorems. In particular, we prove the two conjectures of Agaoka mentioned above by determining the lexicographically
least and greatest constituents of the characters~$\phi_\nu^{(m^n)}$. 
We also give a necessary and
sufficient condition for $\phi_\nu^{(m^n)}$ to have a unique
minimal or maximal constituent, and find an $\SL(E)$-invariant subspace
in the polynomial representation corresponding to certain twisted Foulkes characters. 
Finally we  
show that $\phi^{(2^n)}_{(1^n)}$ has the interesting property that all its constituents are both minimal
and maximal; we use
our two main theorems to give a new proof of the decomposition of this character into irreducible 
characters. 

We remark that
Theorem~2.6 in the authors' earlier paper \cite{PagetWildon} 
is the special case of Theorem~\ref{thm:min} when $m$ is odd and $\nu = (n)$. The authors recently learned of
work by Klivans and Reiner \cite[Proposition~5.10]{KlivansReiner} which gives a result equivalent to this special case.
The proofs in this paper use some similar ideas to \cite{PagetWildon}, 
but are considerably shorter, and give  more general results.

\section{Background on plethysms}\label{sec:background}

Let $\nu$ be a partition of $n$. 
Under the characteristic isomorphism $\phi^{(m^n)}_{\nu}$ is sent to the
plethysm of Schur functions \hbox{$s_\nu \circ s_{(m)}$}
(see \cite[I, Appendix A, (6.2)]{Macdonald}). 
The existing results on the characters $\phi^{(m^n)}_\nu$ are limited
 and have
mainly been obtained using the methods of symmetric polynomials.
We shall use this language throughout this section. 
The following plethysms of the form $s_\nu \circ s_{(m)}$ have a known 
decomposition into Schur functions:
\begin{itemize}
\item[(i)] $s_{(1^2)} \circ s_{(m)}$, $s_{(2)} \circ s_{(m)}$,
$s_{(n)} \circ s_{(1^2)}$ and $s_{(n)} \circ s_{(2)}$; see Littlewood \cite{LittlewoodCharacters},
\item[(ii)] $s_{(3)} \circ s_{(m)}$; see Thrall \cite[Theorem~5]{Thrall} or Dent and Siemons
\cite[Theorem~4.1]{DentSiemons},
\item[(iii)] $s_\nu \circ s_{(m)}$ when $\nu$ is a partition of
$4$;
see Theorem 27 of Foulkes \cite{FoulkesPlethysm} for an explicit decomposition in a special case and the remarks on the general case immediately following, 
\item[(iv)] $s_\nu \circ s_{(m)}$ when $\nu$ is a partition of $2$, $3$ or $4$;
see Howe \cite[Section 3.5 and Remark 3.6(b)]{Howe}. Howe's statements are usually more convenient than Foulkes'.
\end{itemize}

There are several further results which, like our two main theorems, give information 
 about constituents of a
special form.  The Cayley--Sylvester formula states that the multiplicity of $s_{(mn-r,r)}$ in
$s_{(n)} \circ s_{(m)}$ 
is equal to the number of partitions of $r$ whose Young diagram is contained
in the Young diagram of $(m^n)$.
A striking generalization due to Manivel \cite{ManivelCayleySylvester} 
states that the two-variable symmetric function $(s_{(n^k)} \circ s_{(m+k-1)})(x_1,x_2)$ 
is symmetric under any permutation of $m$, $n$ and $k$.
Taking $k=1$ and swapping $m$ and $n$ gives the
Cayley--Sylvester formula, while taking $k=1$ and swapping $k$ and~$n$ gives
$(s_{(n)} \circ s_{(m)})(x_1,x_2) = (s_{(1^n)} \circ s_{(m+n-1)})(x_1,x_2)$. 
In \cite{LangleyRemmel} Langley and Remmel used
symmetric functions methods to
determine the multiplicities in
$s_\nu \circ s_\mu$ of the Schur functions 
$s_{(mn-r,1^r)}$, $s_{(mn-r-s,s,1^{r})}$ and $s_{(mn-r-2t,2^t,1^r)}$,
for any partition $\mu$ of $m$.
Giannelli \cite[Theorem~1.2]{Giannelli} later used character-theoretic methods to
determine the multiplicities of a much
larger class of `near hook' constituents of $s_{(n)} \circ s_{(m)}$.

For sufficiently small partitions $\nu$ and $\mu$, the plethysm
$s_\nu \circ s_{\mu}$ 
can readily be calculated using any of the computer algebra systems {\sc Magma} \cite{Magma}, 
{\sc Gap}~\cite{GAP4}
or {\sc Symmetrica} \cite{Symmetrica}. 
A new algorithm for computing $s_{(n)} \circ s_{(m)}$ was given
in \cite[Proposition 5.1]{EvseevPagetWildon}, and used to verify Foulkes' Conjecture
(see \cite[end~\S 1]{Foulkes})
for all $m$ and $n$ such that $m + n \le 19$.

 Applying the $\omega$ involution (see \cite[Ch.~I, Equation~(2.7)]{Macdonald}) gives  further results for the plethysms $s_\nu \circ s_{(1^m)}$, via the relation 
 \begin{equation}\label{eqn:omega}
\omega(s_\nu \circ s_{(m)}) = \begin{cases} s_{\nu'} \circ s_{(1^m)} & \text{ if $m$ is odd} \\ s_{\nu} 
\circ s_{(1^m)} & \text{ if $m$ is even,} \end{cases} 
 \end{equation} 
 which follows from \cite[Ch.~I, Equation~(3.8) and \S 8, Example 1(a)]{Macdonald}. This equation
 is reformulated in terms of modules and characters in Section~\ref{subsec:Q}.

Finally we note that the lexicographically greatest 
 constituent of $s_\nu \circ s_\mu$ was
determined by Iijima in \cite[Theorem 4.2]{Iijima}, confirming a conjecture of Agaoka 
\cite[Conjecture 1.2]{Agaoka}. 
We give a short proof of the special cases of
Iijima's result when $\mu = (m)$ or $\mu=(1^m)$ in Section~\ref{sec:corollaries} below.

\section{Specht modules and plethysm}\label{sec:plethysm}

In this section we recall a standard construction of Specht modules as modules
defined by generators and relations. We then give a functorial interpretation of plethysm 
for
 categories of modules for symmetric groups. This leads to an explicit construction of
modules affording the characters $\phi^{(m^n)}_\nu$ and $\sgn_{S_{mn}} \!\times\, \phi^{(m^n)}_\nu$. 

\subsection{Garnir elements} \label{subsec:Garnir}
Let $\lambda$ be a partition of $r \in \N$.
We use the standard definition  \cite[Definition 4.3]{James} of the rational Specht module $S^\lambda$ 
as the $\Q S_r$-submodule of the Young permutation module $M^\lambda$ spanned by the
$\lambda$-polytabloids $e_t$ for~$t$ a $\lambda$-tableau. It is well known that 
$S^\lambda$ affords the irreducible character~$\chi^\lambda$.

Following Fulton (see \cite[Chapter 7, Section 4]{FultonYT}), we define a $\lambda$-\emph{column tabloid}
to be an equivalence class of $\lambda$-tableaux up to column equivalence. We denote 
the column
tabloid corresponding to a tableau $t$ by $|t|$ and represent it by omitting the
horizontal lines from the representative $t$.
The symmetric group acts in an obvious
way on the set of $\lambda$-column tabloids: let $U \cong M^{\lambda'}$ denote the corresponding
permutation module for $\Q S_r$. We define
$\widetilde{M}^\lambda = \sgn_{S_r} \cotimes U$. (This is equivalent to Fulton's definition using
orientations.) By a small abuse of notation we shall write $|t|$ for the basis element of $\widetilde{M}^\lambda$ 
corresponding to the $\lambda$-column tabloid $t$.
There is a canonical surjective
homomorphism
 of $\Q S_{mn}$-modules $\widetilde{M}^\lambda \rightarrow S^\lambda$ defined
by $|t| \mapsto e_t$.

It follows from the corollary on page 101 of \cite{FultonYT} and
the proof of Theorem~8.4 of \cite{James} 
that the kernel of the canonical
surjection $\widetilde{M}^\lambda \rightarrow S^\lambda$ is spanned by all elements of $\widetilde{M}^\lambda$ of the form
\begin{equation}\label{eq:kernelElts} |t| \sum_{\sigma \in S_{X \cup Y}}
\sigma \sgn(\sigma) 
\end{equation}
where $t$ is a $\lambda$-tableau,
$X$ is a subset of set of all entries in column $i$ of~$t$ and~$Y$ is a subset of the entries in column $i+1$ of $t$
such that $|X| + |Y| > \lambda_i'$. By
Exercise~16 on page 102 of \cite{FultonYT},
we need only consider the case when $Y$ is a singleton set; note that
this result requires that the ground field has characteristic zero.
An easy calculation now shows that, if $t$ is any fixed $\lambda$-tableau, 
then the kernel is generated, as a $\mathbb{Q}S_{mn}$-module, 
by the $t$-\emph{Garnir elements}
$|t| \sum_{\sigma \in S_{X \cup \{y\}}} \sigma \sgn(\sigma)$, where $X$ is the set of entries
in column $i$ of $t$ and $y$ is the entry at the top of column $i+1$ of $t$.
(This term is not standard, but will be convenient in this paper.)

\subsection{The plethysm functor $P$} \label{subsec:P}

Let $m$, $n \in \N$ and let~$\nu$ be a partition of~$n$. Let $S_{mn}$ act naturally on the set $\Omega$ of size $mn$.
Given a module $V$ for $\Q S_n$ we define
\[ P(V) = \bigl( \Inf_{S_n}^{S_m \wr S_n} V \bigr) \Ind_{S_m \wr S_n}^{S_{mn}}. \]
Since $P$ is the composition of an 
inflation and an induction functor, $P$ is an exact functor
from the category of $\Q S_n$-modules to the category of $\Q S_{mn}$-modules. 
By definition $P(S^\nu)$ affords the irreducible character $\phi^{(m^n)}_\nu$.

We now give an explicit model for the modules $P(M^\nu)$, $P(\widetilde{M}^\nu)$
and $P(S^\nu)$. These modules  have bases defined using tableaux, tabloids and column tabloids
with entries taken from the set of $m$-subsets of the set $\Omega$ of size~$mn$; 
we shall refer to these objects as \emph{set-tableaux}, \emph{set-tabloids} and \emph{column set-tabloids}. 
Let $S_m \wr S_n \le S_{mn}$ have $\{\Delta_1, \ldots, \Delta_n\}$ as a system of blocks of imprimitivity.
 As a concrete module isomorphic to 
 $\Inf_{S_n}^{S_m \wr S_n} M^\nu$, we take the rational vector space $W$ with basis the set of 
 set-tabloids of shape $\nu$ with entries from  
 $\{\Delta_1, \ldots, \Delta_n\}$. 
Let $W'$ denote the rational vector space with basis the set of all set-tabloids of shape $\nu$
such that the union of all the $m$-subsets
appearing in each set-tabloid is $\Omega$. Then $W'$
is a $\Q S_{mn}$-module of dimension $|S_{mn}|/|S_m \wr S_n| \dim W$, generated by
the $\Q(S_m \wr S_n)$-submodule~$W$. Hence $W' \cong W\ind_{S_m \wr S_n}^{S_{mn}}$
and so $W' \cong P(M^\nu)$. 
By the functoriality of $P$ the canonical inclusion map
$S^\nu \hookrightarrow M^\nu$ induces a canonical inclusion 
\[ P(S^\nu) \hookrightarrow P(M^\nu).\]
An entirely analogous construction with set-tableaux and column set-tabloids gives modules
isomorphic to $P(\Q S_n)$ and $P(\widetilde{M}^\nu)$, respectively, with canonical quotient maps
\[ 
P(\widetilde{M}^\nu) \twoheadrightarrow P(S^\nu). \] 
We illustrate this construction in Section~\ref{subsec:ex} below.

\subsection{The signed plethysm functor $Q$} \label{subsec:Q}

Let $\widetilde{\sgn}$ denote the unique $1$-dimensional module for $S_m \wr S_n$ that restricts 
to the module $\sgn \otimes \cdots \otimes \sgn$ of the base group 
$S_m \times \cdots \times S_m$ and on which the complement $S_n$ acts trivially. 
Given a module $V$ for $\Q S_n$ we define
\[ Q(V)= 
(\widetilde{\sgn} \otimes \Inf_{S_n}^{S_m \wr S_n} V)\Ind_{S_m \wr S_n}^{S_{mn}}. \] 
Again $Q$ is an exact functor from the category of $\Q S_n$-modules to the category of $\Q S_{mn}$-modules.

We define $\psi^{(m^n)}_\nu$ to be the character of $Q(S^\nu)$. 
%
The twisted Foulkes characters $\phi^{(m^n)}_\nu$ are related to the characters $\psi_\nu^{(m^n)}$ via a sign-twist. 
We have
\[
 \sgn_{S_{mn}} \otimes P(V)   =  
\left(  \sgn \Res^{S_{mn}}_{S_m \wr S_n} \otimes \Inf_{S_n}  V \right)\Ind_{S_m \wr S_n}^{S_{mn}}.
\]
The restriction of $\sgn_{S_{mn}}$ to $S_m \wr S_n$ is $\widetilde{\sgn}$ if $m$ is even and $\widetilde{\sgn}\, {} \otimes  {}
\Inf_{S_n}^{S_m \wr S_n} \sgn_{S_n}$ if $m$ is odd.
Therefore 
\begin{equation}\label{eqn:QP}
\sgn_{S_{mn}}\cotimes P(V)   \cong \begin{cases} Q(V) & \text{ if $m$ is even} \\
Q(\sgn_{S_n}\cotimes V) & \text{ if $m$ is odd.} \end{cases}\end{equation}
Using the isomorphism  
\begin{equation}\label{eqn:Specht_sign_twist}
\sgn_{S_n}  \cotimes S^{\nu}  \cong (S^{\nu'})^*
\end{equation}
(see, for example,
\cite[Theorem~6.7]{James}), and that Specht modules are self-dual over the rationals (see \cite[Theorem~4.12]{James}),
we obtain the reformulation of Equation~(\ref{eqn:omega}) for characters:
\begin{equation}\label{eqn:sign_twist}
  \sgn_{S_{mn}}\times \, \phi^{(m^n)}_\nu  = \begin{cases} \psi^{(m^n)}_\nu & \textrm{if $m$ is even}\\ 
 \psi^{(m^n)}_{\nu'} & \textrm{if $m$ is odd.} \end{cases} 
\end{equation}


\subsection{Connection with Schur functors}\label{subsec:Schur}
We remark very briefly on an alternative definition of these functors. 
Let $\Delta^\lambda$ be the Schur functor corresponding to the partition $\lambda$ (see \cite[page 76]{FultonHarris} or
 \cite[page 273]{Procesi}).
Let $E$ be a rational vector space
of dimension at least $mn$. If $F$ is the functor defined in \cite[Section 6.1]{Green}
from polynomial representations of $\GL(E)$
of degree $r$ to representations of $S_r$ then, by \cite[I, Appendix A, (6.2)]{Macdonald}, 
$F\bigl( \Delta^\nu (\Sym^m\! E) \bigr) = P(S^\nu)$, corresponding to the plethysm $s_\nu \circ s_{(m)}$,
and $F\bigl( \Delta^\nu (\bigwedge^m E) \bigr) = Q(S^\nu)$, corresponding to the plethysm $s_\nu \circ s_{(1^m)}$.
We use this interpretation of $P$ and $Q$ in Section~\ref{subsec:rectangular} below.

\section{Further preliminary results and an example}\label{sec:closed}

\subsection{Closed set families}\label{subsec:closed}
Let $A$ and $B$ be $m$-subsets of $\N$. Let $a_r$ be the $r$th smallest
element of $A$, and  let $b_r$ be the $r$th smallest element of $B$. 
We say that $B$ \emph{majorizes} $A$, and
write $A \preceq B$, if $a_r \le b_r$ for all $r$.
We say that a set family $\mathcal{P}$ of shape $(m^n)$ 
is \emph{closed} if whenever $B \in \mathcal{P}$ and $A$ is an $m$-subset of $\N$
such that $A \preceq B$, then $A \in \mathcal{P}$. 
We say that a set family tuple $(\mathcal{P}_1, \ldots, \mathcal{P}_k)$
is \emph{closed} if $\mathcal{P}_j$ is closed for each $j$.
It is easily seen that closed set families and closed set family tuples have well-defined types.

If  $(\mathcal{P}_1, \ldots, \mathcal{P}_k)$ is a minimal set family tuple then it is closed.  For if not
 there is a set family $\mathcal{P}_j$, a set $A \in \mathcal{P}_j$
and  an element $i+1 \in A$, such that  the set $B = A \backslash \{i+1\}\,\cup \{i\}$ is 
not in $\mathcal{P}_j$. A new set family tuple can be formed by replacing $A$ by $B$ in $\mathcal{P}_j$
and this process repeated until a closed set family tuple $(\mathcal{P}'_1, \ldots, \mathcal{P}'_k)$ is obtained: this 
 set family tuple
has a well-defined type. By construction, 
  $\mathcal{P}'_j$ has the same shape as $\mathcal{P}_j$ for each $j$, and the type of $(\mathcal{P}'_1, \ldots, \mathcal{P}'_k)$ is strictly dominated by the type of $(\mathcal{P}_1, \ldots, \mathcal{P}_k)$, 
  contradicting minimality.
 This argument also shows that if $(\mathcal{P}_1, \ldots, \mathcal{P}_k)$ is a minimal set family tuple then  each set family $\mathcal{P}_j$ is minimal.

 Closed  multiset family tuples are defined analogously and the same argument shows that 
 minimal  multiset family tuples   are closed.

\subsection{Symbols}\label{subsec:symbols}
When defining maps from $S^\lambda$ or from $\widetilde{M}^\lambda$,
it will be convenient to think of $S_{mn}$ as the symmetric group on the set
$\Omega^\lambda$ whose elements are the formal \emph{symbols}
$i_j$ for $i$ and $j$ such that $1 \le i \le \lambda_1$ and $1 \le j \le \lambda_i'$. 
We say that $i_j$ has \emph{number} $i$ and \emph{index} $j$. 
Let $t_\lambda$ be the $\lambda$-tableau such that column $i$ of $t_\lambda$ has entries $i_1, \ldots, i_{\lambda_i'}$
when read from top to bottom. 
Let $C(t_\lambda)$ be the column stabilising subgroup of $t_\lambda$; note
that $C(t_\lambda)$ permutes the indices of the symbols in $\Omega^\lambda$, 
while leaving the numbers fixed. 
Let $b_{t_\lambda} = \sum_{\sigma \in C(t_\lambda)} \sigma \sgn(\sigma)$.

\subsection{Example}\label{subsec:ex} This example illustrates the definitions so far,
and  
many of the ideas in the proofs of Theorem~\ref{thm:min} and Theorem~\ref{thm:max} to follow. Let $m=2$,
let $\nu = (2,1,1)$ and let
$\mathcal{P}=( \{ \{1,2\}, \{1,3\}, \{1,4\} \}, \{ \{1,2\} \} )$ be the minimal
set family tuple of type $\lambda=(4,2,1,1)$ seen in the introduction.
 We identify $S_8$ with the symmetric
group on the set $\Omega^{(4,2,1,1)} = \{ \oa, \ob, \oc, \od, \pa, \pb, \qa, \ra \}$
and choose $S_2 \wr S_4 \le S_8$ to 
have blocks of imprimitivity $\{\oa, \pa\}$, $\{\ob, \qa\}$,
$\{\oc, \ra\}$, $\{\od, \pb\}$. 
Let $T$ be the set-tableau
\[
\arraycolsep=2pt\arrayrulewidth=0.5pt
\newcommand{\rl}{\rule[-6pt]{0pt}{18pt}}
\quad
\begin{array}{|c|c|} 
\hline \{\oa,\pa\} & \{\od,\pb\}\rl \\ 
\hline \{\ob,\qa\}\rl \\ 
\cline{1-1} \{\oc,\ra\}\rl \\ 
\cline{1-1} 
\end{array} .\]
The column set-tabloid $|T|$ generates $\Inf_{S_4}^{S_2 \wr S_4} \widetilde{M}^\nu$ as a 
$\Q(S_2 \wr S_4)$-module and $P(\widetilde{M}^\nu)$ as a $\Q S_8$-module.  
For example
\[
|T|\hskip1pt (\oa,\ob) =
\arraycolsep=2pt\arrayrulewidth=0.8pt 
\begin{array}{|c|c|} 
\{\ob,\pa\} & \{\od,\pb\} \\ 
\{\oa,\qa\} \\ 
\{\oc,\ra\} 
\end{array} \; = - \;
\begin{array}{|c|c|} 
\{\oa,\qa\} & \{\od,\pb\} \\ 
\{\ob,\pa\} \\ 
\{\oc,\ra\} 
\end{array}.
\]
There is a unique 
homomorphism of $\Q S_8$-modules $\widetilde{M}^{(4,2,1,1)} \rightarrow 
P(\widetilde{M}^{(2,1,1)})$ sending $|t_{(4,2,1,1)}|$ to $|T|b_{t_{(4,2,1,1)}}$. 
We shall see in the proof of Proposition~\ref{prop:evenHoms} below that the kernel of 
this homomorphism contains all the $t_{(4,2,1,1)}$-Garnir elements. Hence
there is a well-defined homomorphism of $\Q S_8$-modules 
$S^{(4,2,1,1)} \rightarrow P(\widetilde{M}^{(2,1,1)})$
defined by $e_{t_{(4,2,1,1)}} \mapsto |T|b_{t_{(4,2,1,1)}}$.
After composition with the canonical surjection
$P(\widetilde{M}^{(2,1,1)}) \rightarrow P(S^{(2,1,1)})$ the image of $e_{t_{(4,2,1,1)}}$ is 
$e_T b_{t_\lambda} \in P(S^{(2,1,1)}) \subseteq P(M^{(2,1,1)})$. 
As we argue in Lemma~\ref{lemma:evenHoms} below,
the coefficient of the tabloid $\{ T \}$ in
$e_T b_{t_\lambda}$ is $1$, and so this map is non-zero.
Hence $\langle \phi^{(2^4)}_{(2,1,1)}, \chi^{(4,2,1,1)} \rangle \ge 1$. 

Observe that if $T$ is a set-tableau having 
an entry containing symbols~$i_j$ and $i_k$ with $j\not=k$ then $|T|(i_j,i_k) = 1$,
whereas $e_{t_\lambda}(i_j,i_k) = -1$. Thus the entries of $T$
must come from set families. (This remark is made precise in the proof of Proposition~\ref{prop:characterToSetFamily} below.)
We shall see in Section~\ref{sec:psi} that the maximal constituents of $\phi_{(2,1,1)}^{(2^4)}$ 
are determined by homomorphisms into $Q(S^{(2,1,1)}) \cong \sgn_{S_8} \cotimes P(S^{(2,1,1)})$.
In this setting, thanks to the sign-twist, the two signs agree. 
This gives one indication why set-tableaux
with entries given by multiset families, rather than set families, are relevant to maximal constituents.

\section{Proof of Theorem~\ref{thm:min} for $m$ even}\label{sec:evenProof}

Fix an even number $m$. Let $\nu$ be a partition of $n$ with first part $k$. 
The proof of Theorem~\ref{thm:min} for even $m$ has two steps. In the first we construct an explicit
homomorphism $S^\lambda \rightarrow P(S^\nu)$ for each closed set family tuple of type~$\lambda$.
We then use these homomorphisms to show that the minimal constituents of the character
$\phi^{(m^n)}_\nu$ are as claimed in the theorem. 
We must begin with one more definition.

Let $(\mathcal{P}_1, \ldots, \mathcal{P}_k)$ be a  set family tuple of type $\lambda$
such that $\mathcal{P}_j$ has shape $(m^{\nu_j'})$ for each $j$. 
Let $\mathcal{A}{(\mathcal{P}_1, \ldots, \mathcal{P}_k)}$ be the set of all ordered pairs 
$(j, B)$ such that $1 \le j \le k$ and $B \in \mathcal{P}_j$. We totally order 
$\mathcal{A}(\mathcal{P}_1, \ldots, \mathcal{P}_k)$ so that
$(i,A) \le (j,B)$ if and only if $i < j$ or $i = j$ and $A \le B$, where the final
inequality refers to the lexicographic order on sets.

\begin{definition}{ \ }\label{def:coltab}
\begin{defnlist}
\item
The \emph{column set-tableau} corresponding to $(\mathcal{P}_1, \ldots, \mathcal{P}_k)$
is the unique set-tableau $T$ of shape $\nu$ such that if $\mathcal{P}_j = \{A_1, \ldots, A_{\nu_j'} \}$ 
then the entries in column~$j$ of $T$ are obtained by
appending indices to the numbers in the sets $A_1, \ldots, A_{\nu_j'}$,
listing the sets in lexicographic order and
choosing indices in the order specified by the order
 on $\mathcal{A}{(\mathcal{P}_1, \ldots, \mathcal{P}_k)}$. 
 
\item The \emph{column set-tabloid} 
corresponding to $(\mathcal{P}_1, \ldots, \mathcal{P}_k)$ is 
 $|T| \in P(\widetilde{M}^{\nu})$,
 where $T$ is the column set-tableau corresponding to $(\mathcal{P}_1, \ldots, \mathcal{P}_k)$ .
 \end{defnlist}
\end{definition}
Observe that the union of the entries in the column set-tableau 
corresponding to $(\mathcal{P}_1, \ldots, \mathcal{P}_k)$ is the set $\Omega^\lambda$.
For example, the set-tableau $T$ in Section~\ref{subsec:ex} is the column set-tableau
corresponding to the set family tuple $\bigl( \bigl\{ \{1,2\}, \{1,3\}, \{1,4\} \bigr\},
\big\{ \{ 1,2\} \bigr\} \bigr)$.

Let $(\mathcal{P}_1, \ldots, \mathcal{P}_k)$ be a closed set family tuple of type $\lambda$
such that $\mathcal{P}_j$ has shape $(m^{\nu_j'})$ for each $j$. 
Let $|T| \in P(\widetilde{M}^\nu)$ be the column set-tabloid corresponding
to $(\mathcal{P}_1, \ldots, \mathcal{P}_k)$.
Let $t_\lambda$ be the $\lambda$-tableau defined in Section~\ref{subsec:symbols}, and let
\[f_{(\mathcal{P}_1, \ldots, \mathcal{P}_k)} : \widetilde{M}^\lambda \rightarrow
P(\widetilde{M}^\nu)\]
 be the unique 
$\Q S_{mn}$-homomorphism such that
\[ |t_\lambda|f_{(\mathcal{P}_1, \ldots, \mathcal{P}_k)} = |T| b_{t_\lambda}. \]

\begin{proposition}\label{prop:evenHoms}
The kernel of $f_{(\mathcal{P}_1, \ldots, \mathcal{P}_k)}$ contains every $t_\lambda$-Garnir
element.
\end{proposition}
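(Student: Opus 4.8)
The plan is to show that each $t_\lambda$-Garnir element lies in $\ker f_{(\mathcal{P}_1,\ldots,\mathcal{P}_k)}$ by computing its image directly and exploiting the closure hypothesis. Recall that a $t_\lambda$-Garnir element has the form $|t_\lambda|\,g$ where $g = \sum_{\sigma \in S_{X \cup \{y\}}} \sigma\,\sgn(\sigma)$, with $X$ the set of symbols in column $i$ of $t_\lambda$ (so $|X| = \lambda_i'$) and $y = (i+1)_1$ the symbol at the top of column $i+1$. Since $|t_\lambda| f_{(\mathcal{P}_1,\ldots,\mathcal{P}_k)} = |T| b_{t_\lambda}$, the image of the Garnir element is $|T| b_{t_\lambda} g$. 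First I would observe that, because $g$ is antisymmetric under $S_{X\cup\{y\}}$, it suffices to analyse how the column set-tabloid $|T| b_{t_\lambda}$ behaves under transpositions swapping $y$ with an element of $X$, or equivalently to decompose $g$ over coset representatives of $S_X$ in $S_{X\cup\{y\}}$ and show the resulting alternating sum collapses.

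The key mechanism is this: $|T| b_{t_\lambda}$ is a signed sum of column set-tabloids indexed by ways of permuting indices within columns of $T$; each such column set-tabloid is determined, up to sign, by an assignment of the symbols to the $m$-sets in the $\mathcal{P}_j$. Swapping $y = (i+1)_1$ past the symbols of $X$ amounts to moving the number $i+1$ into the sets of column $i$ in place of the number $i$. The crucial point is that, by closure of the set family tuple, whenever we replace an occurrence of $i$ by $i+1$ inside one of the $m$-sets in $\mathcal{P}_i$ — wait, more carefully: the relevant substitution replaces $i+1$ by $i$, lowering a set $B$ with $i+1 \in B$ to $B \setminus\{i+1\} \cup \{i\} \preceq B$; closure guarantees the resulting set still lies in the family, so the resulting column set-tabloid is again (a sign times) a basis element coming from a legitimate configuration. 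I would then match terms of $b_{t_\lambda} g$ in pairs so that each term is cancelled by another carrying the opposite sign — the pairing being induced by a transposition in $S_X$ acting on the index level, which $b_{t_\lambda}$ already antisymmetrises over. Since $|X| + |\{y\}| = \lambda_i' + 1 > \lambda_i'$, the pigeonhole principle forces at least two of the $|X\cup\{y\}|$ symbols being permuted by $g$ to land, for every term, in positions within a single column of $T$ (more precisely, among sets of a single $\mathcal{P}_i$ that after substitution coincide), producing a repeated column set-tabloid with cancelling signs.

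Concretely, the steps in order: (1) write the Garnir element's image as $|T| b_{t_\lambda} g$ and reduce $g$ to a sum over cosets $S_{X\cup\{y\}}/S_X$, i.e. to the $\lambda_i'+1$ choices of which symbol plays the role of the "displaced" one; (2) for each such choice, track the corresponding column set-tableau obtained from $T$ by the substitution $i+1 \leftrightarrow i$ in the appropriate entry, invoking closure of $(\mathcal{P}_1,\ldots,\mathcal{P}_k)$ to guarantee the substituted set lies in the same family, hence the resulting object is $\pm$ a column set-tabloid appearing in the model for $P(\widetilde{M}^\nu)$; (3) show that after summing against $b_{t_\lambda}$, which antisymmetrises over index permutations within each column of $t_\lambda$, every surviving term appears together with an equal term of opposite sign, so the total is zero. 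The bookkeeping of signs — keeping straight the sign from $\sgn(\sigma)$, the sign picked up from $b_{t_\lambda}$ when reordering indices, and the sign from the $\sgn_{S_{mn}}$-twist built into $\widetilde{M}^\lambda$ and $P(\widetilde{M}^\nu)$ — is where the argument is most delicate; this is the main obstacle, and I would handle it by fixing once and for all the lexicographic/index conventions of Definition~\ref{def:coltab} and checking the cancellation on a generic pair of terms, as illustrated by the computation in Section~\ref{subsec:ex}.
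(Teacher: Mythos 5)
Your overall strategy---compute the image $|T|\,b_{t_\lambda}G_{X\cup\{(i+1)_1\}}$ of a Garnir element and kill it by a sign-reversing cancellation that exploits closure---is the same as the paper's, and your reduction of $G_{X\cup\{(i+1)_1\}}$ modulo $S_X$ in step (1) is harmless. But the cancellation mechanism you describe in steps (2)--(3) does not work as stated. The pigeonhole claim is false: the $\lambda_i'+1$ symbols of $X\cup\{(i+1)_1\}$ are distributed one per entry containing $i$ or $i+1$, and nothing forces two of them into one entry (or one column) of $T$; the case where the entry $B$ containing $(i+1)_1$ contains no symbol of $X$ is the generic, main case. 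In that case the term $|T|\tau\,(i_x,(i+1)_1)$ is \emph{not} of the form $\pm|T|\tau'$ with $\tau'\in C(t_\lambda)$: moving $(i+1)_1$ into an ``$i$-slot'' changes the underlying sets, and the set that gains $i+1$ need not lie in the family (closure only goes downwards). So the terms of $|T|b_{t_\lambda}g$ do not simply pair off among themselves, and certainly not ``via a transposition in $S_X$''.

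What closure actually buys, and what the paper uses, is different: for $\tau\in C(t_\lambda)$ with $(i+1)_1$ in an entry $B_{(i+1)_1}$ of column $j$ containing no symbol of $X$, closure of $\mathcal{P}_j$ yields a companion entry $A_{(i+1)_1}$ in the \emph{same column} of $|T|\tau$, with the same numbers except $i$ in place of $i+1$. One then pairs $\tau$ with $\tau^\star=\tau\pi$, where $\pi\in C(t_\lambda)$ (not $S_X$) swaps the indices on the $m-1$ numbers common to $A$ and $B$. The signs close up only because $\pi$ is a product of $m-1$ transpositions and $m$ is \emph{even}, so $\sgn(\tau^\star)=-\sgn(\tau)$, while $\pi\,(i_u,(i+1)_1)$ interchanges two whole entries of column $j$ and hence negates the column set-tabloid; factoring $G_{X\cup\{(i+1)_1\}}$ through $1-(i_u,(i+1)_1)$ then kills the pair. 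Your proposal never invokes the parity of $m$, which is a genuine gap rather than bookkeeping: for odd $m$ this column construction fails outright, and the paper has to pass to row set-tabloids in Section~\ref{sec:oddProof}. Finally, the fixed-point case in which some entry contains both $i_x$ and $(i+1)_1$ must be treated separately (there $|T|\tau$ is invariant under $(i_x,(i+1)_1)$, so the Garnir sum annihilates that single term); your sketch does not address it.
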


\begin{proof}
Let $1 \le i < \lambda_1$ and let $X = \{i_1, \ldots, i_{\lambda_i'} \}$ 
be the set of entries in column~$i$ of $t_\lambda$.
We have
 \[ |t_\lambda| G_{X\scup \{(i+1)_1\}} f_{(\mathcal{P}_1, \ldots, \mathcal{P}_k)}
 = 
 |T| \sum_{\tau \in C(t_\lambda)} \tau  G_{X \scup \{(i+1)_1\}} \sgn(\tau). \]  
To prove that the right-hand side is zero we shall
construct an involution on $C(t_\lambda)$, denoted 
$\tau \mapsto \tau^\star$, with the following two properties:
\begin{itemize}
\item[(a)] if $\tau = \tau^\star$ then $|T| \tau G_{X \scup \{ (i+1)_1 \}} = 0$, 
\item[(b)] if $\tau \not= \tau^\star$ then $|T| \bigl(\tau \sgn(\tau) + 
\tau^\star \sgn(\tau^\star)\bigr) G_{X \scup \{ (i+1)_1 \} } = 0$.
\end{itemize}

 Let $\tau \in C(t_\lambda)$. Consider $|T| \tau$. If there exists a symbol  $i_x \in X$   such that there is an entry in $|T| \tau$ containing both $i_x$ and $(i+1)_1$, then we have
 $|T| \tau (1 - (i_x, (i+1)_1)) = 0$. Taking coset representatives for $\langle (i_x, (i+1)_1) \rangle \le S_{X \scup \{(i+1)_1\}}$, it follows that $|T| \tau G_{X \scup \{(i+1)_1\}} = 0$. Hence if we define $\tau^\star = \tau$ in this case then~(a) holds.
 
 Now suppose that no entry in $|T| \tau$ contains both $(i+1)_1$ and a symbol $i_x \in X$.
Let the entry of $|T|\tau$ containing $(i+1)_1$ be 
\begin{align*} 
 B_{(i+1)_1} &= \{c(1)_{b(1)}, \ldots, c(m-1)_{b(m-1)},  (i+1)_1 \}. 
\intertext{Suppose that $B_{(i+1)_1}$ lies in column $j$ of $|T|\tau$.
This column is defined using the set family $\mathcal{P}_j$. Since $\mathcal{P}_j$ is closed, there exists unique symbols $c(1)_{a(1)}, \ldots, c(m-1)_{a(m-1)}$ and $i_u$ such that the multiset}
A_{(i+1)_1} &= \{c(1)_{a(1)}, \ldots, c(m-1)_{a(m-1)},  i_u \} 
\end{align*} 
is also an entry in column $j$ of $|T|\tau$.
 Define 
 \[ \pi =  (c(1)_{a(1)}, c(1)_{b(1)}) \ldots (c(m-1)_{a(m-1)}, c(m-1)_{b(m-1)}) \in C(t_\lambda) \]  
 and define $\tau^\star = \tau \pi$. 
 Since the column set-tabloids $|T|\tau$ and $|T|\tau^\star$ differ only in indices attached to numbers other than $i$ and $i+1$, we have $\tau^{\star\star} = \tau$.
  Since~$m$ is even we have
  $\sgn(\tau) = -\sgn(\tau^\star)$ and since $\pi (i_u, (i+1)_1)$ swaps two entries in column $j$ of $|T|\tau$ we have \[ |T| \tau^\star (i_u, (i+1)_1) = |T| \tau  \pi (i_u, (i+1)_1) = - |T| \tau. \]
 Using this relation to eliminate $\tau^\star$ we obtain
 \[ \bigl(  |T| \tau \sgn(\tau) + |T| \tau^\star \sgn(\tau^\star) \bigr) \bigl( 1 - (i_u,(i+1)_1) \bigr)=0.\]
Hence $\bigl( |T| \tau \sgn(\tau) + |T| \tau^\star \sgn(\tau^\star) \bigr)
G_{X \scup \{(i+1)_1\}} = 0$, as required in (b).
\end{proof}

It now follows from Section~\ref{subsec:Garnir} that 
$f_{(\mathcal{P}_1, \ldots, \mathcal{P}_k)}$ induces a homomorphism
$S^\lambda \rightarrow P(\widetilde{M}^\nu)$. Let
\[\bar{f}_{(\mathcal{P}_1, \ldots, \mathcal{P}_k)} : S^\lambda \rightarrow P(S^\nu)\] 
denote the composition
of this homomorphism with the canonical quotient map $P(\widetilde{M}^\nu)
\rightarrow P(S^\nu)$. Thus $\bar{f}_{(\mathcal{P}_1, \ldots, \mathcal{P}_k)}$ is defined on
the generator $e_{t_\lambda}$ of $S^\lambda$ by
\[e_{t_\lambda} \bar{f}_{(\mathcal{P}_1, \ldots, \mathcal{P}_k)} =
e_T b_{t_\lambda}.\]

\begin{lemma}\label{lemma:evenHoms}
The homomorphism $\bar{f}_{(\mathcal{P}_1, \ldots, \mathcal{P}_k)} : S^\lambda \rightarrow P(S^\nu)$ 
is non-zero.
\end{lemma}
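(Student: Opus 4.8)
The plan is to show that the generator $e_{t_\lambda}$ of $S^\lambda$ has non-zero image, that is, that $e_T b_{t_\lambda}$ is a non-zero element of $P(S^\nu)\subseteq P(M^\nu)$. Since $P(M^\nu)$ has as a basis the set-tabloids of shape $\nu$ whose entries partition $\Omega^\lambda$, it suffices to exhibit one such set-tabloid occurring with non-zero coefficient in $e_T b_{t_\lambda}$, and the obvious candidate is $\{T\}$ itself. First I would record the expansion of $e_T$ in this basis: it is the signed sum $\sum_\pi \sgn(\pi)\{T^\pi\}$ over the permutations $\pi$ of the entries of $T$ that preserve the columns of $\nu$, where $\sgn(\pi)$ is the sign of $\pi$ regarded as a permutation of the $n$ entries (this is just the $P$-image of the usual polytabloid expansion of $e_{t_\nu}$). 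Because the entries of $T$ are pairwise distinct $m$-sets, $\{T^\pi\}=\{T\}$ forces $\pi=1$, so $\{T\}$ occurs in $e_T$ with coefficient $1$; hence the coefficient of $\{T\}$ in $e_T b_{t_\lambda}$ equals $\sum \sgn(\pi)\sgn(\tau)$, the sum over all pairs $(\pi,\tau)$ with $\pi$ a column permutation of $T$, $\tau\in C(t_\lambda)$, and $\{T^\pi\tau\}=\{T\}$.

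Next I would show that only the trivial column permutation can occur in such a pair. Every $\tau\in C(t_\lambda)$ fixes the number of each symbol, so $T^\pi\tau$ and $T^\pi$ carry the same multiset of number-sets in each row; moreover, within column $j$ of $T$ the entries carry the $\nu_j'$ \emph{distinct} sets of $\mathcal{P}_j$, placed in a fixed order governed by the lexicographic order on $\mathcal{P}_j$ (Definition~\ref{def:coltab}). Running through the sets of $\bigcup_j\mathcal{P}_j$ in increasing lexicographic order, one checks that a non-trivial $\pi$ must move the multiset of number-sets in some row, so no matching $\tau$ can exist. Therefore the coefficient of $\{T\}$ in $e_T b_{t_\lambda}$ is $\sum_{\tau\in H}\sgn(\tau)$, where $H=\{\tau\in C(t_\lambda):\{T\tau\}=\{T\}\}$ is a subgroup of $C(t_\lambda)$.

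Finally I would evaluate this sum. An element $\tau\in H$ induces a permutation of the entries of $T$ that fixes each row and preserves the number-set of each entry, and conversely every such permutation of entries lifts uniquely to a $\tau\in C(t_\lambda)$ (for each number, follow the induced permutation on the symbols of that number). If the induced permutation of entries is a single $k$-cycle on entries sharing a common number-set of size $m$, then $\tau$ is a product of $m$ disjoint $k$-cycles, so $\sgn(\tau)=(-1)^{m(k-1)}=1$ because $m$ is even; in general $\sgn(\tau)=1$ for every $\tau\in H$. Hence the coefficient of $\{T\}$ equals $|H|\ge 1$, so $e_T b_{t_\lambda}\neq 0$ and $\bar{f}_{(\mathcal{P}_1,\ldots,\mathcal{P}_k)}$ is non-zero.

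The step I expect to be the main obstacle is the claim in the second paragraph that a non-trivial column permutation of $T$ always disturbs the number-sets in some row; this is exactly where the precise index-attaching recipe of Definition~\ref{def:coltab} earns its keep, and I anticipate a short induction over the sets of $\bigcup_j\mathcal{P}_j$ taken in increasing lexicographic order. The sign computation in the last paragraph is the other essential point: it uses the hypothesis that $m$ is even, and the fact that $\sgn$ need not be trivial on $H$ when $m$ is odd is precisely what necessitates the modified argument (with $\kappa=\nu'$) in Section~\ref{sec:oddProof}.
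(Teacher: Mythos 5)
Your argument is correct and follows the same basic route as the paper: both proofs extract the coefficient of the set-tabloid $\{T\}$ in $e_T b_{t_\lambda}$ and show it is positive. The paper's proof, however, is only two sentences: it asserts that this coefficient equals the coefficient of $\{T\}$ in $\{T\}b_{t_\lambda}$, and that the latter is $1$, leaving implicit exactly the two points you make explicit. Your second paragraph (only the trivial column permutation $\pi$ of the entries of $T$ can be completed by some $\tau\in C(t_\lambda)$ so as to return to the tabloid $\{T\}$) is the justification the paper omits for the first assertion; the claim is true and your proposed induction does go through: process the underlying $m$-sets in increasing lexicographic order and the rows from top to bottom, using that each column of $T$ lists the distinct sets of $\mathcal{P}_j$ in increasing order (Definition~\ref{def:coltab}), to conclude that any column permutation preserving the row-wise multisets of underlying sets fixes the underlying set in every cell and hence is trivial. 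Your third paragraph in fact corrects the paper's second assertion: the stabiliser $H$ of $\{T\}$ in $C(t_\lambda)$ need not be trivial, since entries in the same row but different columns of $T$ may share an underlying $m$-set, so the coefficient is $\sum_{\tau\in H}\sgn(\tau)=|H|$ rather than $1$; for instance in the running example of Section~\ref{subsec:ex} it equals $2$, because $(\oa,\od)(\pa,\pb)$ fixes $\{T\}$. Your observation that every $\tau\in H$ has sign $+1$ precisely because $m$ is even is what makes the count positive, and, as you say, is exactly what fails for odd $m$ and necessitates the row-based construction of Section~\ref{sec:oddProof}. So: same approach as the paper, but your write-up supplies the missing justification for discarding the terms with $\pi\neq 1$ and gives the correct value of the coefficient.
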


\begin{proof}
Since $b_{t_\lambda}$ 
permutes the indices attached to numbers,
while leaving the numbers fixed, it is clear
that the coefficient of the set-tabloid $\{T\}$ in $\{T\}b_{t_\lambda}$ is~$1$.
This is also the coefficient of $\{T\}$ in $e_T b_{t_\lambda}$.
\end{proof}

We summarize the results proved so far in the following corollary.
We show in Section~\ref{subsec:further} that this corollary gives
constituents of $\phi^{(m^n)}_\nu$ beyond those predicted by Theorem~\ref{thm:min}.

\begin{corollary}\label{cor:even}
Let $m$ be even and let $\nu$ be a partition of $n$ with first part~$k$. 
If there is a closed  set family tuple $(\mathcal{P}_1,\ldots,\mathcal{P}_k)$
of type $\lambda$ such that $\mathcal{P}_i$ has shape $(m^{\nu_i'})$ for each $i$,
then $\langle \phi^{(m^n)}_\nu, \chi^\lambda \rangle \ge 1$. 
\end{corollary}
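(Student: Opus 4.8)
The plan is simply to assemble the pieces already established. Given the closed set family tuple $(\mathcal{P}_1,\ldots,\mathcal{P}_k)$ of type $\lambda$ with each $\mathcal{P}_i$ of shape $(m^{\nu_i'})$, I would form the column set-tabloid $|T|$ of Definition~\ref{def:coltab} (closedness is used here, and again in Proposition~\ref{prop:evenHoms}, and guarantees the type is well defined) and the associated homomorphism $f_{(\mathcal{P}_1,\ldots,\mathcal{P}_k)} : \widetilde{M}^\lambda \to P(\widetilde{M}^\nu)$ determined by $|t_\lambda| \mapsto |T| b_{t_\lambda}$. By Proposition~\ref{prop:evenHoms} every $t_\lambda$-Garnir element lies in $\ker f_{(\mathcal{P}_1,\ldots,\mathcal{P}_k)}$, and by the discussion in Section~\ref{subsec:Garnir} these elements generate the kernel of the canonical surjection $\widetilde{M}^\lambda \twoheadrightarrow S^\lambda$. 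Hence $f_{(\mathcal{P}_1,\ldots,\mathcal{P}_k)}$ factors through $S^\lambda$, and composing with the canonical quotient $P(\widetilde{M}^\nu) \twoheadrightarrow P(S^\nu)$ yields $\bar f_{(\mathcal{P}_1,\ldots,\mathcal{P}_k)} : S^\lambda \to P(S^\nu)$, acting on the generator by $e_{t_\lambda} \mapsto e_T b_{t_\lambda}$.

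Next I would invoke Lemma~\ref{lemma:evenHoms}: since $b_{t_\lambda}$ only permutes the indices attached to numbers, the coefficient of the set-tabloid $\{T\}$ in $e_T b_{t_\lambda}$ is $1$, so $\bar f_{(\mathcal{P}_1,\ldots,\mathcal{P}_k)}$ is non-zero. Finally, recall that over $\Q$ the Specht module $S^\lambda$ is absolutely irreducible and affords $\chi^\lambda$, while $P(S^\nu)$ affords $\phi^{(m^n)}_\nu$ by definition. A non-zero $\Q S_{mn}$-homomorphism out of the irreducible module $S^\lambda$ embeds $S^\lambda$ as a submodule of $P(S^\nu)$, so
\[
\langle \phi^{(m^n)}_\nu, \chi^\lambda \rangle = \dim_\Q \Hom_{\Q S_{mn}}\bigl(S^\lambda, P(S^\nu)\bigr) \ge 1,
\]
which is the assertion.

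There is no substantial obstacle left at this stage: the corollary is a direct consequence of Proposition~\ref{prop:evenHoms} and Lemma~\ref{lemma:evenHoms}. The genuine difficulty has already been dispatched in the proof of Proposition~\ref{prop:evenHoms} — namely producing the sign-reversing involution $\tau \mapsto \tau^\star$ on $C(t_\lambda)$ that kills the Garnir elements, where it is essential that $m$ is even so that $\sgn(\tau) = -\sgn(\tau^\star)$, and essential that each $\mathcal{P}_j$ is closed so that the required partner entry $A_{(i+1)_1}$ exists in the relevant column. Everything else here is bookkeeping with the standard Specht-module presentation and the exactness of the functor $P$.
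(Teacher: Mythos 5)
Your proposal is correct and follows the paper's own route exactly: the paper deduces the corollary immediately from Proposition~\ref{prop:evenHoms} and Lemma~\ref{lemma:evenHoms}, with the factorization of $f_{(\mathcal{P}_1,\ldots,\mathcal{P}_k)}$ through $S^\lambda$ and the composition with $P(\widetilde{M}^\nu) \twoheadrightarrow P(S^\nu)$ carried out in the surrounding text, just as you describe. Your closing observation that a non-zero homomorphism from the absolutely irreducible $S^\lambda$ forces $\langle \phi^{(m^n)}_\nu, \chi^\lambda \rangle \ge 1$ is the same (implicit) final step the paper relies on.
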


\begin{proof}
This follows immediately from Proposition~\ref{prop:evenHoms} and
Lemma~\ref{lemma:evenHoms}
\end{proof}

The final ingredient in the proof of Theorem~\ref{thm:min} in the case when $m$ is even is a result that goes in the opposite direction
to Corollary~\ref{cor:even}.

\begin{proposition}\label{prop:characterToSetFamily}
Let $m$ be even and let $\nu$ be a partition of $n$ with first part~$k$.
If $\chi^\mu$ is a  constituent of $\phi^{(m^n)}_\nu$ 
then there is a  set family tuple $(\mathcal{R}_1,\ldots,
\mathcal{R}_k)$ of type $\mu$ such
that $\mathcal{R}_j$ has shape $(m^{\nu_j'})$ for each $j$.
\end{proposition}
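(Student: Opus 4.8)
\textit{Proof proposal.}

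The plan is to produce a non-zero homomorphism $S^\mu\to P(\widetilde M^\nu)$ and to read the required set family tuple off a column set-tabloid occurring in the image of the generator $e_{t_\mu}$. Since $\Q S_{mn}$ is semisimple, the canonical surjection $P(\widetilde M^\nu)\twoheadrightarrow P(S^\nu)$ splits, so $P(S^\nu)$ is a direct summand of $P(\widetilde M^\nu)$; as $\chi^\mu$ is a constituent of $\phi^{(m^n)}_\nu$, the character of $P(S^\nu)$, there is a non-zero $\Q S_{mn}$-homomorphism $\Theta\colon S^\mu\to P(S^\nu)\subseteq P(\widetilde M^\nu)$. Identifying $S_{mn}$ with $\Sym(\Omega^\mu)$ and using the tableau $t_\mu$ as in Section~\ref{subsec:symbols}, set $v=e_{t_\mu}\Theta$. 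Since $e_{t_\mu}\sigma=\sgn(\sigma)e_{t_\mu}$ for every $\sigma\in C(t_\mu)$, we have $v\sigma=\sgn(\sigma)v$. Expand $v$ in the column set-tabloid basis of $P(\widetilde M^\nu)$, indexed by set-tableaux $T$ of shape $\nu$ whose entries partition $\Omega^\mu$, taken up to column equivalence; as $v\ne 0$, some $|T|$ occurs in $v$ with non-zero coefficient $c_T$.

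The first key step is to show such a $T$ is well behaved with respect to numbers. Suppose two symbols $i_x,i_y$ lie in a common entry of $T$. Then $(i_x,i_y)\in C(t_\mu)$ preserves every block of $T$, so it lies in the base group of the copy of $S_m\wr S_n$ stabilising the block-system of $T$; this base group acts trivially after inflation, so $|T|(i_x,i_y)=|T|$. Since $(i_x,i_y)$ is an involution fixing $|T|$, no other basis vector occurring in $v$ is sent to a multiple of $|T|$, so the coefficient of $|T|$ in $v(i_x,i_y)$ is $c_T$, whereas in $v(i_x,i_y)=-v$ it is $-c_T$; hence $c_T=0$. Next suppose two distinct entries $E_1\ne E_2$ in a common column of $T$ have the same set of numbers $a_1<\dots<a_m$. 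Being distinct entries of a set-tableau, $E_1$ and $E_2$ are disjoint, so the product $\pi$ of the $m$ transpositions pairing off $E_1$ and $E_2$ number by number lies in $C(t_\mu)$ and has $\sgn(\pi)=(-1)^m=1$ because $m$ is even. Now $\pi$ transposes the blocks $E_1,E_2$ and preserves the others, so in the copy of $S_m\wr S_n$ stabilising the block-system of $T$ it maps to the transposition $(E_1,E_2)$ of the top group $S_n$, and hence acts on $|T|$ by $\sgn_{S_n}(E_1,E_2)=-1$ (swapping two entries within a single column does not change the column set-tabloid). Comparing the coefficient of $|T|$ on the two sides of $v\pi=v$ gives $c_T=0$ once more.

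Therefore, choosing a column set-tabloid $|T|$ that occurs in $v$, no entry of $T$ contains two symbols with the same number and no column of $T$ contains two entries with the same set of numbers. Replacing each symbol $i_x$ by its number $i$ then turns $T$ into a set-tableau $\bar T$ of shape $\nu$ whose entries are genuine $m$-subsets of $\N$ and whose columns have pairwise distinct entries; let $\mathcal R_j$ be the set of entries in column $j$ of $\bar T$, a set family of shape $(m^{\nu_j'})$, for $1\le j\le k$ (recall $k=\nu_1$). Since $\Omega^\mu$ contains exactly $\mu_i'$ symbols of number $i$ and these lie in distinct entries of $T$, exactly $\mu_i'$ entries of $\bar T$ — equivalently, exactly $\mu_i'$ of the sets in $\mathcal R_1,\dots,\mathcal R_k$ — contain $i$. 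Hence $(\mathcal R_1,\dots,\mathcal R_k)$ is a set family tuple of type $\mu$ with each $\mathcal R_j$ of shape $(m^{\nu_j'})$, as required.

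I expect the delicate point to be the second reduction in the middle step. It uses essentially that $m$ is even, so that transposing two equal-sized blocks is an even permutation, and that we argue inside the signed module $\widetilde M^\nu$ rather than $M^\nu$: the sign $\sgn_{S_n}$ carried by $\widetilde M^\nu$ is precisely the sign that $\sgn(\pi)$ fails to cancel, and it is what annihilates the ``column-improper'' tabloids. For odd $m$ this step genuinely breaks down — as it must, since the odd case of Theorem~\ref{thm:min} replaces $\nu$ by $\nu'$ — and the necessary modifications are carried out in Section~\ref{sec:oddProof}.
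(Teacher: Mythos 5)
Your proposal is correct and follows essentially the same route as the paper: obtain a non-zero homomorphism $S^\mu \to P(\widetilde{M}^\nu)$ (the paper via the character inequality $\langle \zeta,\chi^\mu\rangle \ge \langle \phi^{(m^n)}_\nu,\chi^\mu\rangle$, you via semisimplicity, which is the same point), pick a column set-tabloid $|T|$ with non-zero coefficient in $e_{t_\mu}f$, rule out a repeated number in an entry by a transposition in $C(t_\mu)$ and a repeated entry in a column by an even product of $m$ transpositions (using $m$ even), and read off the set family tuple of type $\mu$. Your coefficient-comparison justification of the two contradictions is just a more explicit version of the paper's argument, so no further comment is needed.
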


\begin{proof}
Let $\zeta$ be the character of $P(\widetilde{M}^\nu)$. We have
$\langle \zeta, \chi^\mu \rangle \ge \langle \phi^{(m^n)}_\nu, \chi^\mu \rangle \ge 1$.
Hence there is a non-zero $\Q S_{mn}$-module homomorphism
$f : S^\mu \rightarrow P(\widetilde{M}^\nu)$. Identify $S_{mn}$ with the symmetric group on the symbols $\Omega^\mu$.  Let $T$ be a set-tableau
such that the coefficient of $|T|$ in $e_{t_\mu} f$ is non-zero.
Let $i_j$ and~$i_{j'}$ be symbols appearing in $t_\mu$. If
there is an entry in $T$ containing both $i_j$ and $i_{j'}$ then we have
$|T| (i_j, i_{j'}) = |T|$, whereas $e_{t_\mu} (i_j, i_{j'}) = -e_{t_\mu}$,
a contradiction.
Now suppose that there is a column of $T$ containing entries 
$\{ c(1)_{a(1)}, \ldots, c(m)_{a(m)} \}$ and $\{ c(1)_{b(1)}, \ldots, c(m)_{b(m)} \}$
that are equal up to the indices attached to numbers. Let
\[ \rho = (c(1)_{a(1)}, c(1)_{b(1)}) \ldots (c(m)_{a(m)}, c(m)_{b(m)}). \] 
Since $\rho$ swaps two entries in the same column
of $|T|$, we have $|T|\rho = - |T|$. But since $\rho$ is even, $e_{t_\mu} \rho = 
e_{t_\mu}$, so again we have a contradiction. It follows that removing
the indices attached to the numbers in
column $j$ of $|T|$ gives a set family $\mathcal{R}_j$ of shape  $(m^{\nu_j'})$. 
Since the union of the entries in $|T|$ is $\Omega^\mu$,
the set family tuple $(\mathcal{R}_1,\ldots,\mathcal{R}_k)$ has type $\mu$, as required.
\end{proof}

We are now ready to prove Theorem~\ref{thm:min} for even values of $m$.
Suppose that  $(\mathcal{P}_1,\ldots,
\mathcal{P}_k)$ is a minimal set family tuple 
 of type $\lambda$ such that each $\mathcal{P}_j$ 
has shape $(m^{\nu_j'})$. We saw in Section~\ref{subsec:closed} that any minimal
 set family tuple   is closed. Hence, by Corollary~\ref{cor:even},
$\langle \phi^{(m^n)}_\nu, \chi^\lambda \rangle \ge 1$. 
If $\mu$ is a partition of~$mn$ such that $\lambda \unrhd \mu$
and $\langle \phi^{(m^n)}_\nu, \chi^\mu \rangle \ge 1$ then,
by Proposition~\ref{prop:characterToSetFamily}, there
is a set family tuple   $(\mathcal{R}_1,\ldots,
\mathcal{R}_k)$  of type $\mu$
such that $\mathcal{R}_j$ has shape $(m^{\nu_j'})$ for each $j$.
But $(\mathcal{P}_1,\ldots,
\mathcal{P}_k)$ is minimal, so we must have $\lambda = \mu$.
Hence $\chi^\lambda$ is a minimal constituent of $\phi^{(m^n)}_\nu$.

Conversely suppose that $\chi^\lambda$ is a minimal constituent
of $\phi^{(m^n)}_\nu$. By Proposition~\ref{prop:characterToSetFamily}
there is a  set family tuple 
 $(\mathcal{R}_1,\ldots,
\mathcal{R}_k)$  of type $\lambda$
such that $\mathcal{R}_j$ has shape $(m^{\nu_j'})$ for each $j$.
Hence there is a minimal set family tuple 
$(\mathcal{P}_1,\ldots,
\mathcal{P}_k)$ of type $\mu$ where $\lambda \unrhd \mu$
such that $\mathcal{P}_i$  has shape $(m^{\nu_j'})$ for 
each $j$. Once again we have
$\langle \phi^{(m^n)}_\nu, \chi^\mu \rangle \ge 1$.
But $\chi^\lambda$ is a minimal constituent of $\phi^{(m^n)}_\nu$
so we must have $\lambda = \mu$. Hence  $(\mathcal{R}_1,\ldots,
\mathcal{R}_k)$ is a minimal  set family tuple. This completes
the proof.

\section{Proof of Theorem~\ref{thm:min} for $m$ odd}\label{sec:oddProof}

Theorem~\ref{thm:min} can be proved for odd values of $m$ by modifying the proof in the case of $m$ even,
following the same
logical structure of Section~\ref{sec:evenProof}. We give the required changes in detailed outline.

 Let $\nu$ be a partition of $n$ 
with precisely $k$ parts
 and let $(\mathcal{P}_1, \ldots,
\mathcal{P}_k)$ be a  set family tuple of type $\lambda$ such
that $\mathcal{P}_j$ has shape $(m^{\nu_j})$ for each $j$. 
Define the totally ordered 
set $\mathcal{A}(\mathcal{P}_1,\ldots,\mathcal{P}_k)$ of pairs $(j,X)$ with
$1 \le j \le k$ and $X \in \mathcal{P}_j$ as before. We define the \emph{row set-tableau} $T$
and the \emph{set-tabloid} $\{T\}$ corresponding to  $(\mathcal{P}_1, \ldots, \mathcal{P}_k)$
by analogy with Definition~\ref{def:coltab}. Thus $T$ and $\{T\}$ have shape $\nu$, the entries in row $j$ of $T$ and $\{T\}$
are determined by the order on $\mathcal{A}(\mathcal{P}_1,\ldots,\mathcal{P}_k)$, 
and the union of all the entries in $T$ or $\{T\}$ is $\Omega^\lambda$.

Let $(\mathcal{P}_1, \ldots, \mathcal{P}_k)$ be a closed  set family tuple  of 
type $\lambda$ as above and let $\{T\} \in
P(M^\nu)$ be the corresponding set-tabloid. 
We define
\[ g_{(\mathcal{P}_1, \ldots, \mathcal{P}_k)} : \widetilde{M}^\lambda \rightarrow
P(M^\nu) \]
by $|t_\lambda| g_{(\mathcal{P}_1, \ldots, \mathcal{P}_k)}
= \{T \} b_{t_\lambda}$. We now follow Section~\ref{sec:evenProof}, making the following changes.

\begin{itemize}
\item[(1)] \emph{Proposition 5.2.}
The proof of the analogue of Proposition~\ref{prop:evenHoms}
 goes through almost unchanged.
Now swapping two entries in the same row
of a  set tabloid $\{T\}$ leaves the sign unchanged, but
the permutation $\pi$ is even. The pattern of cancellation in
$\bigl( \{T\}\tau \sgn(\tau) + \{T\}\tau^\star \sgn(\tau^\star) \bigr)
G_{X \scup \{(i+1)_1\}}$ is therefore the same.

\smallskip
\item[(2)] \emph{Definition of homomorphisms into $P(S^\nu)$.}
Let $\{u \} \in M^\nu$ be a fixed tabloid. By \cite[Equation (6.8)]{James}, there is a surjective $\Q S_n$-homomorphism
$M^\nu \rightarrow \sgn_{S_n} \otimes S^{\nu'}$ defined 
on the generator $\{u\}$ by $\{u\} \mapsto w \otimes e_{u'}$, where $u'$ is the tableau conjugate to $u$
and $w$ generates $\sgn_{S_n}$.
Applying $P$ gives a canonical quotient map $P(M^\nu) \rightarrow P(\sgn_{S_n} \otimes S^{\nu'})$.
Composing the map induced by $g_{(\mathcal{P}_1, \ldots, \mathcal{P}_k)}$ on
$S^\lambda$ 
with this surjection gives a homomorphism
$\bar{g}_{(\mathcal{P}_1, \ldots, \mathcal{P}_k)} : S^\lambda
\rightarrow P(\sgn_{S_n}\otimes  S^{\nu'} )$ 
sending $e_{t_\lambda}$ to $(w \otimes e_{T'})b_{t_\lambda}$,
where $T'$ is the conjugate set-tableau to $T$. 
The isomorphisms $\sgn_{S_n} \otimes S^{\nu'} \cong (S^\nu)^* \cong S^\nu$ 
seen in Equation~\eqref{eqn:Specht_sign_twist} and the following remark in Section~\ref{subsec:Q} 
now identify the codomain
of $\bar{g}_{(\mathcal{P}_1, \ldots, \mathcal{P}_k)}$ with $P(S^\nu)$. 

\smallskip

\item[(3)] \emph{Lemma~\ref{lemma:evenHoms}.}
Thinking of the codomain of $\bar{g}_{(\mathcal{P}_1, \ldots, \mathcal{P}_k)}$
as a submodule of $P(\sgn_{S_n} \otimes M^{\nu'})$ it follows by looking at 
the coefficient
of $\{T'\}$ in $e_{t_\lambda} \bar{g}_{(\mathcal{P}_1, \ldots, \mathcal{P}_k)}$  that 
$\bar{g}_{(\mathcal{P}_1, \ldots, \mathcal{P}_k)}$ is non-zero. 

\smallskip

\item[(4)] \emph{Corollary~\ref{cor:even}.} The analogous result holds with the same proof.

\smallskip

\item[(5)] \emph{Proposition~\ref{prop:characterToSetFamily}.} 
The use of characters at the start of the proof can be avoided as follows:
given a non-zero homomorphism $f : S^\mu \rightarrow P(S^\nu)$,
composing with the map induced by the
canonical inclusion $S^\nu \rightarrow M^\nu$ gives a non-zero homomorphism
$f : S^\mu \rightarrow P(M^\nu)$. Then take a set-tabloid $\{T\}$ with non-zero
coefficient in the image of $e_{t_\mu}$, as before. 
The proof goes through changing columns to rows. Observe that swapping two entries in a row of $\{T\}$ leaves $\{T\}$ unchanged but the permutation  $\rho$ is now odd, so $e_{t_\mu} \rho=-e_{t_\mu}$.
\end{itemize}

The end of the proof goes through essentially unchanged.

\section{Proof of Theorem~\ref{thm:max}}\label{sec:psi}

In this section we prove the following theorem
which determines the minimal constituents of the characters $\psi^{(m^n)}_\nu$ 
defined in Section~\ref{subsec:Q}.

\begin{theorem}\label{thm:psiMin}
Let $\nu$ be a partition of $n$ and $\lambda$ be a partition of $mn$.  Set $\kappa = \nu$ if $m$ is even and $\kappa = \nu'$ if $m$ is odd. Let $k$ be the first part of $\kappa$.
Then $\chi^\lambda$ is a minimal constituent
of $\psi_\nu^{(m^n)}$ if and only if there is a minimal multiset family tuple
$(\mathcal{Q}_1, \ldots, \mathcal{Q}_k)$ of type $\lambda$ such that
each $\mathcal{Q}_j$ has shape $(m^{\kappa_j'})$.
\end{theorem}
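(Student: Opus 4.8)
The plan is to run the two‑step argument of Section~\ref{sec:evenProof} with the plethysm functor $P$ replaced by the signed plethysm functor $Q$ of Section~\ref{subsec:Q}, and with set families replaced by multiset families. First I would set up the model. By \eqref{eqn:QP}, together with \eqref{eqn:Specht_sign_twist} and the self‑duality of Specht modules over $\Q$, one has $Q(S^\nu) \cong \sgn_{S_{mn}} \otimes P(S^\kappa)$, so $\psi^{(m^n)}_\nu$ is the character of $\sgn_{S_{mn}} \otimes P(S^\kappa)$; since $\sgn_{S_{mn}} \otimes P(-)$ is exact, applying it to $\widetilde{M}^\kappa \twoheadrightarrow S^\kappa \hookrightarrow M^\kappa$ gives canonical maps $\sgn_{S_{mn}} \otimes P(\widetilde{M}^\kappa) \twoheadrightarrow \sgn_{S_{mn}} \otimes P(S^\kappa) \hookrightarrow \sgn_{S_{mn}} \otimes P(M^\kappa)$. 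Their underlying spaces are spanned, as in Section~\ref{subsec:P}, by column set‑tabloids, resp.\ set‑tabloids, whose entries partition $\Omega^\lambda$, but the $S_{mn}$‑action now carries an extra overall factor $\sgn(g)$. The one structural consequence is exactly the one flagged at the end of Section~\ref{subsec:ex}: if two symbols $i_j$ and $i_k$ lie in a common entry of a set‑tableau $T$, then in the twisted module $|T|(i_j,i_k) = -|T|$, which now agrees with $e_{t_\lambda}(i_j,i_k) = -e_{t_\lambda}$; thus entries may repeat a number, i.e.\ they may be genuine multisets.

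Next I would do the forward direction. Given a closed multiset family tuple $(\mathcal{Q}_1,\ldots,\mathcal{Q}_k)$ of type $\lambda$ with each $\mathcal{Q}_j$ of shape $(m^{\kappa_j'})$, one forms, exactly as in Definition~\ref{def:coltab} but attaching indices to the (now possibly repeated) numbers occurring in the multisets of $\mathcal{Q}_j$, the column set‑tableau $T$ of shape $\kappa$; its entries partition $\Omega^\lambda$, so $|T| \in \sgn_{S_{mn}} \otimes P(\widetilde{M}^\kappa)$. Define $f_{(\mathcal{Q}_1,\ldots,\mathcal{Q}_k)} : \widetilde{M}^\lambda \rightarrow \sgn_{S_{mn}} \otimes P(\widetilde{M}^\kappa)$ by $|t_\lambda| \mapsto |T| b_{t_\lambda}$. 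The analogue of Proposition~\ref{prop:evenHoms} — that every $t_\lambda$‑Garnir element lies in the kernel — is proved by the very same involution $\tau \mapsto \tau^\star = \tau\pi$ on $C(t_\lambda)$: closedness of $\mathcal{Q}_j$ again supplies, for the entry $B_{(i+1)_1}$ of $|T|\tau$ containing $(i+1)_1$, a partner entry $A_{(i+1)_1}$ in the same column and the matching permutation $\pi$. The only change is in the sign ledger: the transposition $(i_u,(i+1)_1)$, which in Proposition~\ref{prop:evenHoms} was balanced against the parity of $\pi$ (forcing the split between Sections~\ref{sec:evenProof} and~\ref{sec:oddProof}), is here balanced instead against the overall $\sgn(g)$‑twist, so properties (a) and (b) hold for every $m$. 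Composing the induced map on $S^\lambda$ with $\sgn_{S_{mn}} \otimes P(\widetilde{M}^\kappa) \twoheadrightarrow \sgn_{S_{mn}} \otimes P(S^\kappa)$ gives $\bar{f}_{(\mathcal{Q}_1,\ldots,\mathcal{Q}_k)} : S^\lambda \rightarrow \sgn_{S_{mn}} \otimes P(S^\kappa)$ with $e_{t_\lambda} \mapsto e_T b_{t_\lambda}$, and this is non‑zero by the analogue of Lemma~\ref{lemma:evenHoms} (the coefficient of $\{T\}$ in $e_T b_{t_\lambda}$ is $1$). Hence, by the analogue of Corollary~\ref{cor:even}, if a closed — in particular a minimal — multiset family tuple of type $\lambda$ of the stated shapes exists, then $\langle \psi^{(m^n)}_\nu, \chi^\lambda \rangle \ge 1$.

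Then I would do the backward direction, mirroring Proposition~\ref{prop:characterToSetFamily}. If $\chi^\mu$ is a constituent of $\psi^{(m^n)}_\nu$, then it is a constituent of the character of $\sgn_{S_{mn}} \otimes P(\widetilde{M}^\kappa)$, so there is a non‑zero $\Q S_{mn}$‑homomorphism $f : S^\mu \rightarrow \sgn_{S_{mn}} \otimes P(\widetilde{M}^\kappa)$; choose a set‑tableau $T$ with $|T|$ appearing in $e_{t_\mu} f$. Two symbols of the same number in one entry of $T$ now cause no contradiction, since both $|T|$ and $e_{t_\mu}$ are negated by the relevant transposition, so each entry of $T$, with indices deleted, is an $m$‑element multiset; whereas if two entries in one column of $T$ agreed up to indices, the product $\rho$ of the $m$ transpositions matching them would act on $|T|$ by $(-1)^{m+1}$ (a sign $-1$ from swapping two entries of a column set‑tabloid, times $\sgn(\rho) = (-1)^m$ from the twist) and on $e_{t_\mu}$ by $\sgn(\rho) = (-1)^m$, forcing the coefficient of $|T|$ in $e_{t_\mu} f$ to vanish — a contradiction, so the multisets in each column are distinct. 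This yields a multiset family tuple $(\mathcal{R}_1,\ldots,\mathcal{R}_k)$ of type $\mu$ with $\mathcal{R}_j$ of shape $(m^{\kappa_j'})$. Finally, the closing argument of Section~\ref{sec:evenProof} carries over verbatim — using that minimal multiset family tuples are closed, as in Section~\ref{subsec:closed} — to conclude that $\chi^\lambda$ is a minimal constituent of $\psi^{(m^n)}_\nu$ if and only if there is a minimal multiset family tuple $(\mathcal{Q}_1,\ldots,\mathcal{Q}_k)$ of type $\lambda$ with $\mathcal{Q}_j$ of shape $(m^{\kappa_j'})$ for each $j$. Combining this with \eqref{eqn:sign_twist}, which converts minimal constituents of $\psi^{(m^n)}_\bullet$ into maximal constituents of $\phi^{(m^n)}_\bullet$, then gives Theorem~\ref{thm:max}.

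The main obstacle will be the sign bookkeeping in the analogues of Proposition~\ref{prop:evenHoms} and Proposition~\ref{prop:characterToSetFamily}: one must check carefully that the overall $\sgn(g)$‑twist of the $Q$‑modules absorbs precisely the parity of the matching permutations $\pi$ and $\rho$, so that a single column‑set‑tableau construction works uniformly in $m$ and the even/odd dichotomy of Sections~\ref{sec:evenProof}--\ref{sec:oddProof} disappears. A secondary technical point is to formulate the index‑attachment rule of Definition~\ref{def:coltab} when a number can occur several times in one multiset and several times in one column, so that the resulting set‑tableau $T$ still has union $\Omega^\lambda$ and all the maps above are well defined.
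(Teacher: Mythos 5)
Your overall architecture coincides with the paper's own proof in Section~\ref{sec:psi}: realize $\psi^{(m^n)}_\nu$ on $\sgn_{S_{mn}}\otimes P(S^\kappa)$ via Equations~\eqref{eqn:QP} and~\eqref{eqn:Specht_sign_twist}, send $|t_\lambda|$ to a twisted column set-tabloid times $b_{t_\lambda}$, kill the Garnir elements by an involution on $C(t_\lambda)$, prove nonvanishing by a coefficient count, and extract a multiset family tuple from any constituent for the converse. However, the Garnir step as you describe it would fail. You assert that ``the very same involution $\tau\mapsto\tau^\star=\tau\pi$'' works with only the sign ledger changed, but two things break. First, the fixed-point case of Proposition~\ref{prop:evenHoms} (an entry of $|T|\tau$ containing both some $i_x\in X$ and $(i+1)_1$) is no longer available: under the twist one gets $(v\otimes|T|\tau)\bigl(1-(i_x,(i+1)_1)\bigr)=2\,(v\otimes|T|\tau)$ rather than $0$ --- and indeed such terms must not be killed, since entries repeating the number $i$ are exactly the legitimate multiset configurations. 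Second, when the entry $B_{(i+1)_1}$ contains $s\ge 1$ further symbols of number $i$, the matching with the partner entry $A_{(i+1)_1}$ supplied by closedness is not ``$\pi$ times the single transposition $(i_u,(i+1)_1)$'': it requires a product $\theta$ of $s+1$ transpositions inside $S_{X\cup\{(i+1)_1\}}$ together with $\pi\in C(t_\lambda)$, and one must guarantee $\pi\neq 1$ so that $\tau^\star=\tau\pi$ is a genuine pairing; $\pi=1$ occurs precisely when $A$ and $B$ are both contained in $X\cup\{(i+1)_1\}$. This is why Proposition~\ref{prop:multisetHoms} introduces a new fixed-point case --- a column of $|T|\tau$ with two entries contained in $X\cup\{(i+1)_1\}$, disposed of by an entry-swapping $\theta$ and the twist --- and a four-term cancellation with $1+\sgn(\theta)\theta$ in the moving case. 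Your sketch contains neither the new fixed-point criterion nor the multi-transposition matching; ``balancing the twist against the parity of $\pi$'' does not supply them.

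The nonvanishing step is also wrong as stated: the coefficient of $\{T\}$ in $e_T b_{t_\lambda}$ is not $1$ in the multiset setting. Any repeated number inside an entry (for instance the entry realizing the multiset $\{1,1\}$ as $\{1_1,1_2\}$) gives a nontrivial subgroup of $C(t_\lambda)$ stabilizing $\{T\}$; in the untwisted module the signed sum over this subgroup can vanish, and it is only the sign twist that forces every stabilizing element to contribute $+1$, so that the coefficient of $v\otimes\{T\}$ equals the order of that subgroup, as in Lemma~\ref{lemma:psiHoms} --- positive, but in general larger than $1$. Your justification therefore has to be replaced by this count, although the conclusion survives. The remaining ingredients of your proposal --- the identification of the codomain with $Q(S^\nu)$, the converse (your computation with $\rho$ matches Proposition~\ref{prop:characterToMultisetFamily}), the closing minimality argument using that minimal multiset family tuples are closed, and the deduction of Theorem~\ref{thm:max} from Equation~\eqref{eqn:sign_twist} --- agree with the paper and are sound.
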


Theorem~\ref{thm:max} then follows at once by Equations~\eqref{eqn:Specht_sign_twist} and~\eqref{eqn:sign_twist} 
in Section~\ref{subsec:Q}.

The proof of Theorem~\ref{thm:psiMin} again follows the same structure as that of Theorem~\ref{thm:min}, 
although this time we are usually able to treat the even and odd cases together. We give full details since there
are several places where the change from sets to multisets means that 
new ideas are required.

Let $\nu$, $\kappa$ and $k$ be as in Theorem~\ref{thm:psiMin}. 
Let $(\mathcal{Q}_1, \ldots, \mathcal{Q}_k)$ be a closed
multiset family tuple of type $\lambda$ such that $\mathcal{Q}_j$ has shape $(m^{\kappa'_j})$ for each $j$.
We define the \emph{column multiset-tableau} $T$ and \emph{column multiset-tabloid} $|T|$ corresponding to 
$(\mathcal{Q}_1, \ldots, \mathcal{Q}_k)$ by replacing sets with multisets in Definition~\ref{def:coltab}. Note
that~$T$ and $|T|$ both have shape $\kappa$.
Let $v$ span $\sgn_{S_{mn}}$.
Let 
\[h_{(\mathcal{Q}_1, \ldots, \mathcal{Q}_k)} : \widetilde{M}^\lambda \rightarrow
 \sgn_{S_{mn}} \cotimes P(\widetilde{M}^\kappa)\]
 be the unique $\Q S_{mn}$-homomorphism such that
\[ |t_\lambda| h_{(\mathcal{Q}_1, \ldots, \mathcal{Q}_k)} = (v \otimes |T|) b_{t_\lambda}. \]

\begin{proposition}\label{prop:multisetHoms}
The kernel of $h_{(\mathcal{Q}_1, \ldots, \mathcal{Q}_k)}$ contains every $t_\lambda$-Garnir
element.
\end{proposition}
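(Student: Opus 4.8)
The strategy is to mirror the proof of Proposition~\ref{prop:evenHoms}, building an involution $\tau \mapsto \tau^\star$ on $C(t_\lambda)$ that forces the image of each $t_\lambda$-Garnir element to vanish. Fix $1 \le i < \lambda_1$ and let $X = \{i_1, \ldots, i_{\lambda_i'}\}$ be the set of entries in column $i$ of $t_\lambda$. Applying $h_{(\mathcal{Q}_1, \ldots, \mathcal{Q}_k)}$ to $|t_\lambda| G_{X \scup \{(i+1)_1\}}$ gives $(v \otimes |T|) \sum_{\tau \in C(t_\lambda)} \tau\, G_{X \scup \{(i+1)_1\}} \sgn(\tau)$, and it suffices to exhibit an involution satisfying the same two properties (a) and (b) as before, now for the element $v \otimes |T|$ of $\sgn_{S_{mn}} \cotimes P(\widetilde{M}^\kappa)$.

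The case analysis runs as follows. First, if for some symbol $i_x \in X$ there is an entry of $(v \otimes |T|)\tau$ containing both $i_x$ and $(i+1)_1$, then in $|T|\tau$ that transposition $(i_x,(i+1)_1)$ fixes the column multiset-tabloid (a multiset is unchanged by permuting equal-to-be entries — here the two symbols lie in a common entry of a column multiset-tabloid and swapping them leaves it fixed), but on the $\sgn$ factor $v(i_x,(i+1)_1) = -v$, so $(v \otimes |T|)\tau\,(1 - (i_x,(i+1)_1)) = 0$, and coset representatives for $\langle(i_x,(i+1)_1)\rangle$ in $S_{X \scup \{(i+1)_1\}}$ give (a) with $\tau^\star = \tau$. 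This is exactly the sign-twist phenomenon flagged at the end of Section~\ref{subsec:ex}: because we are now tensoring with $\sgn_{S_{mn}}$, the two signs that previously \emph{agreed} now \emph{disagree}, which is why multisets rather than sets are the right objects. Otherwise, let $B_{(i+1)_1} = \{c(1)_{b(1)}, \ldots, c(m-1)_{b(m-1)}, (i+1)_1\}$ be the entry of $|T|\tau$ containing $(i+1)_1$, lying in some column $j$; since $\mathcal{Q}_j$ is closed there are unique symbols realizing a companion entry $A_{(i+1)_1} = \{c(1)_{a(1)}, \ldots, c(m-1)_{a(m-1)}, i_u\}$ in column $j$ of $|T|\tau$. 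Set $\pi = (c(1)_{a(1)}, c(1)_{b(1)}) \cdots (c(m-1)_{a(m-1)}, c(m-1)_{b(m-1)})$ and $\tau^\star = \tau\pi$; then $\tau^{\star\star} = \tau$ since $|T|\tau$ and $|T|\tau^\star$ differ only in indices on numbers other than $i$ and $i+1$, and $\pi(i_u,(i+1)_1)$ swaps two entries in column $j$ of $|T|\tau$, giving $|T|\tau^\star(i_u,(i+1)_1) = -|T|\tau$ in $P(\widetilde{M}^\kappa)$. Eliminating $\tau^\star$ and summing over $S_{X \scup \{(i+1)_1\}}$ yields (b); the point is that here I use only the $P(\widetilde{M}^\kappa)$ factor for the sign, and crucially I do \emph{not} need $\sgn(\tau) = -\sgn(\tau^\star)$ (which would fail when $m$ is odd), so the $v$ factor plays no role in case (b) and the argument is uniform in the parity of $m$.

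The main obstacle is getting case (a) exactly right: one must check carefully that a multiset entry of a column multiset-tabloid really is fixed by a transposition of two of its symbols — here $i_x$ and $(i+1)_1$ — where now, unlike the set case, those symbols may already be numerically equal in neighbouring multisets, and that no spurious sign creeps in from the $P(\widetilde{M}^\kappa)$ side (it does not, since the two symbols sit in the same entry, hence the same column, so no column-swap sign appears). With (a) and (b) established, the sum $(v \otimes |T|) \sum_\tau \tau\, G_{X \scup \{(i+1)_1\}} \sgn(\tau)$ cancels in pairs together with the fixed points, so it is zero, and since the $t_\lambda$-Garnir elements of this form generate the kernel of $\widetilde{M}^\lambda \twoheadrightarrow S^\lambda$ (Section~\ref{subsec:Garnir}), the kernel of $h_{(\mathcal{Q}_1, \ldots, \mathcal{Q}_k)}$ contains every $t_\lambda$-Garnir element, as claimed.
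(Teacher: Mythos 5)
The gap is in your case (a). After tensoring with $\sgn_{S_{mn}}$ the two signs behave in exactly the opposite way to what you claim: if an entry of $|T|\tau$ contains both $i_x$ and $(i+1)_1$ then $|T|\tau\,(i_x,(i+1)_1) = |T|\tau$, while $v\,(i_x,(i+1)_1) = -v$, so $(v \otimes |T|\tau)\bigl(1 - (i_x,(i+1)_1)\bigr) = 2\,(v \otimes |T|\tau) \neq 0$. Since the signed Garnir sum factors as $\bigl(1 - (i_x,(i+1)_1)\bigr)$ times a signed sum over coset representatives, it does \emph{not} annihilate $v \otimes |T|\tau$; equivalently, the sign twist turns the Garnir antisymmetrizer into an unsigned symmetrizer on the $P(\widetilde{M}^\kappa)$ factor, so a transposition acting inside a single entry produces no cancellation at all. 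The remark at the end of Section~\ref{subsec:ex} says that in the twisted setting the two signs \emph{agree} (both are $-1$), which is precisely why entries whose numbers form genuine multisets --- including entries containing $(i+1)_1$ together with symbols numbered $i$ --- are allowed and cannot be discarded term by term as you do; you have read that remark backwards and built the fixed-point case of your involution on the false identity above.

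As a result, the permutations $\tau$ you place in case (a) must instead be cancelled against other summands, and your case (b), which is the verbatim set-family pairing, only covers the situation where the entry $B$ containing $(i+1)_1$ has no symbol numbered $i$. The paper's proof takes for its fixed-point case the configuration in which some column of $|T|\tau$ has \emph{two entries entirely contained in} $X \scup \{(i+1)_1\}$: swapping those two whole entries is an element $\theta \in S_{X \scup \{(i+1)_1\}}$ with $|T|\tau\theta = -|T|\tau$, whence $(v \otimes |T|\tau)(1 + \sgn(\theta)\theta) = 0$ and the Garnir sum kills the term. In the remaining case it allows $B_{(i+1)_1} = \{i_{e(1)},\ldots,i_{e(s)},(i+1)_1,c(1)_{b(1)},\ldots,c(m-s-1)_{b(m-s-1)}\}$ with $s \ge 0$: closure of $\mathcal{Q}_j$ supplies the companion entry $A_{(i+1)_1}$, the permutation $\pi \in C(t_\lambda)$ matches only the symbols with numbers different from $i$ and $i+1$, and $\theta \in S_{X \scup \{(i+1)_1\}}$ is now a product of $s+1$ transpositions matching the $i$-symbols of $B$ with those of $A$ as well; the case hypothesis guarantees $\pi \neq 1$, so $\tau^\star = \tau\pi \neq \tau$. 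Your proposal has no valid mechanism for either of these genuinely multiset phenomena (try $m = 2$ and a column containing the entry $\{i_a,(i+1)_1\}$: your argument asserts a vanishing that simply does not hold), so the proof as written fails; your $s = 0$ pairing is correct but is only a fragment of the paper's case (b).
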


\begin{proof}
As before, let $1 \le i < \lambda_1$ and let $X = \{i_1, \ldots, i_{\lambda_i'} \}$ 
be the set of entries in column~$i$ of $t_\lambda$.
We have
 \[ |t_\lambda| G_{X\scup \{(i+1)_1\}} h_{(\mathcal{Q}_1, \ldots, \mathcal{Q}_k)}
 = 
\sum_{\tau \in C(t_\lambda)} (v \otimes  |T|\tau)  G_{X \scup \{(i+1)_1\}} \sgn(\tau). \] 
To show the right-hand side is zero, it suffices to 
construct an involution on $C(t_\lambda)$, denoted 
$\tau \mapsto \tau^\star$, with the following two properties:
\begin{itemize}
\item[(a)] if $\tau = \tau^\star$ then $ (v \otimes |T| \tau)  G_{X \scup \{(i+1)_1\}}= 0$, 
\item[(b)] if $\tau \not= \tau^\star$ then $ (v \otimes |T| \tau + v \otimes |T| \tau^*)  G_{X \scup \{(i+1)_1\}}= 0$.
\end{itemize}

 Let $\tau \in C(t_\lambda)$. Consider $|T| \tau$. 
Suppose that $|T|\tau$ has a column with two entries both entirely contained in $X \scup \{ (i+1)_1\}$. Let these entries
be $\{ i_{d(1)} , i_{d(2)}, \ldots, i_{d(m)} \}$ 
and $\{ (i+1)_1 , i_{e(2)}, \ldots, i_{e(m)} \}$. Set
\[ \theta=(i_{d(1)}, (i+1)_1)   ( i_{d(2)}, i_{e(2)}) \cdots (i_{d(m)}, i_{e(m)}) \in S_{X \scup \{ (i+1)_1 \}}.\]
 We have $|T| \tau \theta = - |T| \tau $ since $\theta$ swaps two entries in the same column of~$|T|\tau$.
 Since $v \sgn(\theta) \theta = v$, we get
 \[(v \otimes |T| \tau)(1 + \sgn(\theta) \theta) = v \otimes  |T| \tau + v \otimes |T| \tau\theta =0.\]
Taking coset representatives for 
 $\langle \theta \rangle \le S_{X \scup \{ (i+1)_1 \}}$, it follows 
 that $(v \otimes |T| \tau)G_{X \scup \{(i+1)_1\}} = 0$.
  Hence if we define $\tau^*=\tau$ in this case then (a) holds.
 
 We  now assume that each column of $|T|\tau$ has at most one entry contained in $X \scup \{(i+1)_1\}$.
 Let the entry of $|T|\tau$ containing $(i+1)_1$ be
\begin{align*} 
 B_{(i+1)_1} &= 
 \{  i_{e(1)}, \ldots, i_{e(s)},   (i+1)_1, c(1)_{b(1)}, \ldots, c(m-s-1)_{b(m-s-1)} \}, 
\intertext{where $s\in \N_0$ and $c(1), \ldots, c(m-s-1) \ne i$.
Suppose that $B_{(i+1)_1}$ lies in column $j$ of $|T|\tau$.
This column is defined using the multiset family $\mathcal{Q}_j$. Since~$\mathcal{Q}_j$ is closed, there exist
unique symbols $i_{d(1)}$, \ldots, $i_{d(s)}$, $i_{d(s+1)}$, \hbox{$c(1)_{a(1)}$}, \ldots, $c(m-s-1)_{a(m-s-1)}$ such that the multiset} 
A_{(i+1)_1} &= \{  i_{d(1)}, \ldots, i_{d(s)},   i_{d(s+1)}, c(1)_{a(1)}, \ldots, c(m-s-1)_{a(m-s-1)} \} 
\end{align*}
is also an entry in column $j$ of $|T|\tau$.
Define 
\[ \theta =
(i_{d(1)}, i_{e(1)}) \cdots(i_{d(s)}, i_{e(s)})(i_{d(s+1)}, (i+1)_1 ) \in S_{X \scup \{ (i+1)_1 \}}, \] 
and
\[ \pi = (c(1)_{a(1)}, c(1)_{b(1)}) \cdots(c(m-s-1)_{a(m-s-1)}, c(m-s-1)_{b(m-s-1)}) \in C(t_\lambda). \]
Our assumption ensures that $\pi$ is not the identity. Set $\tau^*=\tau \pi$. 
 Since the column set-tabloids $|T|\tau$ and $|T|\tau^\star$ differ only in indices attached to numbers other than $i$ and $i+1$, we have $\tau^{\star\star} = \tau$.
Since $\pi \theta$ swaps two entries in column~$j$ of $|T|$ we have $|T| \tau \pi \theta= -|T| \tau$.
Hence
\[
\begin{split}
 (v \otimes  |T| \tau+ v \otimes  {}&{} |T| \tau^*) (1+\sgn(\theta) \theta) = \\ &
v \otimes |T| \tau+ v \otimes |T| \tau\theta + v \otimes |T| \tau \pi + v \otimes |T| \tau\pi \theta =0.\end{split} \]
It follows that $ (v \otimes |T| \tau + v \otimes |T| \tau^*)  G_{X \scup \{(i+1)_1\}} = 0$, as required in (b).
\end{proof}

Therefore
$h_{(\mathcal{Q}_1, \ldots, \mathcal{Q}_k)}$ induces a homomorphism
$S^\lambda \rightarrow \sgn_{S_{mn}} \otimes P(\widetilde{M}^\kappa)$,
sending $e_{t_\lambda}$ to $(v \otimes |T|)b_{t_\lambda}$.
Let $\bar{h}_{(\mathcal{Q}_1,\ldots, \mathcal{Q}_k)} : S^\lambda \rightarrow
\sgn_{S_{mn}} \otimes P(S^\kappa)$ denote the composition
of this homomorphism with the canonical quotient map $\sgn_{S_{mn}} \otimes P(\widetilde{M}^\kappa)
\rightarrow \sgn_{S_{mn}} \otimes P(S^\kappa)$. Thus
$e_{t_\lambda} \bar{h}_{(\mathcal{Q}_1,\ldots, \mathcal{Q}_k)} = (v \otimes e_T)b_{t_\lambda}$.

We now obtain the analogues of Lemma~\ref{lemma:evenHoms}, Corollary~\ref{cor:even}
and Proposition~\ref{prop:characterToSetFamily}.

\begin{lemma}\label{lemma:psiHoms}
The homomorphism $\bar{h}_{(\mathcal{Q}_1, \ldots, \mathcal{Q}_k)} : S^\lambda
\rightarrow \sgn_{S_{mn}} \otimes P(S^\kappa)$ is non-zero. 
\end{lemma}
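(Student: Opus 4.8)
The plan is to show that the image of the generator $e_{t_\lambda}$ is non‑zero, which is enough since $S^\lambda$ is cyclic on $e_{t_\lambda}$. Because $P$ is exact, the canonical inclusion $S^\kappa \hookrightarrow M^\kappa$ induces an inclusion $\sgn_{S_{mn}} \otimes P(S^\kappa) \hookrightarrow \sgn_{S_{mn}} \otimes P(M^\kappa)$, so it suffices to prove that $e_{t_\lambda}\bar{h}_{(\mathcal{Q}_1,\ldots,\mathcal{Q}_k)} = (v \otimes e_T)b_{t_\lambda}$ is non‑zero in the latter module, whose basis consists of the elements $v \otimes \{U\}$ over multiset-tabloids $U$ of shape $\kappa$ whose entries union to $\Omega^\lambda$. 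The first point is that the sign-twist collapses the signed column sum $b_{t_\lambda}$ into an \emph{unsigned} one: since $v$ spans $\sgn_{S_{mn}}$ we have $v\eta = \sgn(\eta)v$ for every $\eta \in S_{mn}$, whence
\[ (v \otimes e_T)b_{t_\lambda} = \sum_{\sigma \in C(t_\lambda)} \sgn(\sigma)\,(v\sigma \otimes e_T\sigma) = \sum_{\sigma \in C(t_\lambda)} (v \otimes e_{T\sigma}) = v \otimes \Bigl(\, \sum_{\sigma \in C(t_\lambda)} e_{T\sigma} \Bigr). \]
This is the structural reason why multiset families, whose entries may repeat a number, are the combinatorial objects relevant here, as foreshadowed at the end of Section~\ref{subsec:ex}.

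Since $v$ spans a one‑dimensional space, it remains to show that $\sum_{\sigma \in C(t_\lambda)} e_{T\sigma} \neq 0$ in $P(M^\kappa)$, and I would do this by computing the coefficient of the multiset-tabloid $\{T\}$ in this sum. Writing each polytabloid in the tabloid basis, $e_{T\sigma} = \sum_{\rho} \sgn(\rho)\,\{T\rho\sigma\}$, where $\rho$ runs over the permutations of the $m$-subset entries of $T$ within columns and $\sgn(\rho)$ is the sign of $\rho$ as such a permutation, the coefficient of $\{T\}$ equals $\sum \sgn(\rho)$ over all pairs $(\sigma,\rho)$ with $\sigma \in C(t_\lambda)$ and $\{T\rho\sigma\} = \{T\}$. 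The key claim is that every such pair has $\rho = 1$; granting this, the coefficient equals $|\{\sigma \in C(t_\lambda) : \{T\sigma\} = \{T\}\}|$, a positive integer since $\sigma = 1$ qualifies, so the sum is non‑zero. Observe the contrast with the even case of Theorem~\ref{thm:min}: there the analogous coefficient was exactly $1$, whereas here $C(t_\lambda)$ genuinely stabilises $\{T\}$ — a transposition of two indices attached to a repeated number inside one entry of $T$ fixes $\{T\}$ — so the coefficient is only $\ge 1$; crucially, no cancellation occurs, because all surviving contributions carry the sign of $\rho = 1$.

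The main obstacle is precisely the claim that $\{T\rho\sigma\} = \{T\}$ forces $\rho = 1$. This is where the closedness of the $\mathcal{Q}_j$ and the canonical choice of indices in the column multiset-tableau $T$ — the total order on $\mathcal{A}(\mathcal{Q}_1,\ldots,\mathcal{Q}_k)$ of Definition~\ref{def:coltab} — enter: the indices attached to a fixed number increase down each column, and each column, read from the top, is downward closed under majorization. If $\rho$ moved the entries of some column non‑trivially, then — after the index-relabelling $\sigma$ is required to restore the row structure — one could locate, exactly as in the cancellation step of the proof of Proposition~\ref{prop:multisetHoms} but run in reverse, an entry whose relabelled image cannot lie in the same column, contradicting that $\rho$ is a column permutation. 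The genuinely new difficulty, absent from Section~\ref{sec:evenProof}, is that here both $\sigma$ and $\rho$ may rearrange whole $m$-subsets at once, so the bookkeeping must track the numbers and their indices simultaneously; carrying this out carefully is the heart of the argument, after which the proof concludes as before.
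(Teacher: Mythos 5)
Your proposal is correct and is essentially the paper's own argument: the paper likewise uses $v\sigma=\sgn(\sigma)v$ to turn $b_{t_\lambda}$ into an unsigned sum over $C(t_\lambda)$ and then reads off the coefficient of $v\otimes\{T\}$ as a positive count (a subgroup order in $C(t_\lambda)$), so no cancellation can occur. The step you single out as the ``heart of the argument'' --- that $\{T\rho\sigma\}=\{T\}$ forces $\rho=1$ --- is precisely what the paper leaves implicit in asserting that the coefficients of $v\otimes\{T\}$ in $(v\otimes e_T)b_{t_\lambda}$ and $(v\otimes\{T\})b_{t_\lambda}$ agree, and it is easier than your sketch suggests: closedness plays no role, since each column of $T$ lists \emph{distinct} multisets in lexicographic order, so a column permutation of entries preserving the multiset of entries in every row (which $\{T\rho\sigma\}=\{T\}$ forces, as $\sigma$ fixes numbers) must fix every entry, by comparing the top row with the column minima and inducting down the rows.
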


\begin{proof}
The coefficients of $v \otimes \{T\}$ in $(v \otimes e_T)b_{t_\lambda}$
and $(v \otimes \{T\})b_{t_\lambda}$ agree. Since $(v \otimes \{T\})\sigma \sgn(\sigma) =
v \otimes \{T\}\sigma$, this coefficient is the order of the subgroup
of $C(t_\lambda)$ that permutes amongst themselves the indices appearing in each entry of $T$. In particular this coefficient is non-zero.
\end{proof}

%

\begin{corollary}{ \ } \label{cor:phipsi}
\begin{thmlist}
\item If there is a closed  multiset family tuple $(\mathcal{Q}_1,\ldots,\mathcal{Q}_k)$
of type $\lambda$ such that~$\mathcal{Q}_i$ has shape $(m^{\kappa_i'})$ for each $i$,
then $\langle \psi^{(m^n)}_\nu, \chi^\lambda \rangle \ge 1$. 
\item If there is a closed  multiset family tuple $(\mathcal{Q}_1,\ldots,\mathcal{Q}_\ell)$
of type $\lambda$ such that~$\mathcal{Q}_i$ has shape $(m^{\nu_i'})$ for each $i$,
then $\langle \phi^{(m^n)}_\nu, \chi^{\lambda'} \rangle \ge 1$. 
\end{thmlist}
\end{corollary}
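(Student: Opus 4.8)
The plan is to read off part~(i) directly from the non-zero homomorphism $\bar{h}_{(\mathcal{Q}_1,\ldots,\mathcal{Q}_k)} : S^\lambda \to \sgn_{S_{mn}} \otimes P(S^\kappa)$ just constructed (Proposition~\ref{prop:multisetHoms} and Lemma~\ref{lemma:psiHoms}), after identifying its codomain as a module affording $\psi^{(m^n)}_\nu$; and then to obtain part~(ii) from part~(i) by the sign-twist identity~\eqref{eqn:sign_twist}. This is the same pattern by which Corollary~\ref{cor:even} followed from Proposition~\ref{prop:evenHoms} and Lemma~\ref{lemma:evenHoms}.

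For part~(i): given such a closed multiset family tuple, Lemma~\ref{lemma:psiHoms} provides a non-zero $\Q S_{mn}$-homomorphism $S^\lambda \to \sgn_{S_{mn}} \otimes P(S^\kappa)$. I would then invoke Section~\ref{subsec:Q} to recognise $\sgn_{S_{mn}} \otimes P(S^\kappa)$ as affording $\psi^{(m^n)}_\nu$: if $m$ is even then $\kappa = \nu$ and Equation~\eqref{eqn:QP} gives $\sgn_{S_{mn}} \otimes P(S^\nu) \cong Q(S^\nu)$; if $m$ is odd then $\kappa = \nu'$ and Equation~\eqref{eqn:QP} gives $\sgn_{S_{mn}} \otimes P(S^{\nu'}) \cong Q(\sgn_{S_n} \otimes S^{\nu'})$, which is $Q(S^\nu)$ by Equation~\eqref{eqn:Specht_sign_twist} and the self-duality of rational Specht modules. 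Since $S^\lambda$ is irreducible with character $\chi^\lambda$, the existence of a non-zero map out of it forces $\langle \psi^{(m^n)}_\nu, \chi^\lambda\rangle \ge 1$.

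For part~(ii): I would twist by $\sgn_{S_{mn}}$. Using $\sgn_{S_{mn}} \times \chi^{\lambda'} = \chi^\lambda$ we get $\langle \phi^{(m^n)}_\nu, \chi^{\lambda'}\rangle = \langle \sgn_{S_{mn}} \times \phi^{(m^n)}_\nu, \chi^\lambda\rangle$, which by Equation~\eqref{eqn:sign_twist} equals $\langle \psi^{(m^n)}_\nu, \chi^\lambda\rangle$ when $m$ is even and $\langle \psi^{(m^n)}_{\nu'}, \chi^\lambda\rangle$ when $m$ is odd. It then suffices to observe that the hypothesis of part~(ii)---a closed multiset family tuple $(\mathcal{Q}_1,\ldots,\mathcal{Q}_\ell)$ of type $\lambda$ with $\mathcal{Q}_i$ of shape $(m^{\nu_i'})$, $\ell$ being the first part of $\nu$---is exactly the hypothesis of part~(i) applied to $\nu$ when $m$ is even (so $\kappa = \nu$, its first part is $\ell$, and $\kappa_i' = \nu_i'$) and applied to $\nu'$ when $m$ is odd (so the partition playing the role of $\kappa$ is $(\nu')' = \nu$, again with first part $\ell$ and shapes $(m^{\nu_i'})$). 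Part~(i) then gives the claimed inequality.

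The representation theory here is entirely carried by the earlier results, so the only real care needed is the index bookkeeping in part~(ii): checking that replacing $\nu$ by $\nu'$ in the odd case makes the hypotheses line up, and in particular that the number $\ell$ of multiset families matches the $k$ of part~(i). I do not anticipate any genuine obstacle.
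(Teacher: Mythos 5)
Your proposal is correct and follows essentially the same route as the paper: part~(i) is read off from the non-zero homomorphism of Lemma~\ref{lemma:psiHoms} after identifying $\sgn_{S_{mn}} \otimes P(S^\kappa)$ with $Q(S^\nu)$ via Equations~\eqref{eqn:QP} and~\eqref{eqn:Specht_sign_twist}, and part~(ii) follows from part~(i) by the sign-twist identity~\eqref{eqn:sign_twist}. Your index bookkeeping for part~(ii) (replacing $\nu$ by $\nu'$ in the odd case so that the relevant $\kappa$ is again $\nu$, with $k=\ell$) is exactly the check the paper leaves implicit, and it is correct.
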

  
\begin{proof}
If $m$ is even then, by Equation~\eqref{eqn:QP} in Section~\ref{subsec:Q}, 
the codomain of $\bar{h}_{(\mathcal{Q}_1, \ldots, \mathcal{Q}_k)}$ 
is isomorphic to $Q(S^\nu)$.
If $m$ is odd then the codomain of 
$\bar{h}_{(\mathcal{Q}_1, \ldots, \mathcal{Q}_k)}$ is
$\sgn_{S_{mn}} \otimes P(S^{\nu'})$,
and  by Equations~\eqref{eqn:QP} and~\eqref{eqn:Specht_sign_twist}, we
have isomorphisms
$\sgn_{S_{mn}} \otimes P(S^{\nu'}) 
\cong Q(\sgn_{S_n} \otimes S^{\nu'}) \cong Q\bigl( (S^\nu)^\star \bigr)
\cong Q(S^\nu)$. Since $Q(S^\nu)$ affords the character $\psi_\nu^{(m^n)}$, 
part (i) now follows from Lemma~\ref{lemma:psiHoms}.
Part (ii) then follows
from part (i) using Equation~\eqref{eqn:sign_twist} in Section~\ref{subsec:Q}.
\end{proof}

\begin{proposition}\label{prop:characterToMultisetFamily}
If $\chi^\mu$ is a  constituent of $\psi^{(m^n)}_\nu$ 
then there is a   multiset family tuple  $(\mathcal{R}_1,\ldots,
\mathcal{R}_k)$ of type $\mu$ such
that $\mathcal{R}_j$ has shape $(m^{\kappa_j'})$ for each $j$.
\end{proposition}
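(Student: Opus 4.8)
The plan is to mirror the proof of Proposition~\ref{prop:characterToSetFamily}, making the changes dictated by the passage from sets to multisets and by the presence of the sign-twist $\widetilde{\sgn}$ built into the functor $Q$. Since $\chi^\mu$ is a constituent of $\psi_\nu^{(m^n)}$, the character of $Q(S^\kappa)$, and $Q$ is exact, $\chi^\mu$ is also a constituent of the character of $\sgn_{S_{mn}}\otimes P(\widetilde M^\kappa)$; equivalently, by Frobenius reciprocity there is a non-zero $\Q S_{mn}$-homomorphism $f : S^\mu \rightarrow \sgn_{S_{mn}}\otimes P(\widetilde M^\kappa)$. Identifying $S_{mn}$ with the symmetric group on the symbols $\Omega^\mu$, I would pick a column multiset-tableau $T$ of shape $\kappa$ such that the coefficient of $v \otimes |T|$ in $e_{t_\mu} f$ is non-zero, where $v$ spans $\sgn_{S_{mn}}$.

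The heart of the argument is to extract the multiset family tuple from the single tableau $T$. First I would rule out two entries in the \emph{same column} of $T$ that agree up to the indices attached to their numbers: if $\{c(1)_{a(1)},\ldots,c(m)_{a(m)}\}$ and $\{c(1)_{b(1)},\ldots,c(m)_{b(m)}\}$ both occur in a column, then $\rho = (c(1)_{a(1)},c(1)_{b(1)})\cdots(c(m)_{a(m)},c(m)_{b(m)})$ is even, so $v\rho = v$ and $(v\otimes|T|)\rho = v\otimes|T|\rho = -v\otimes|T|$, whereas $e_{t_\mu}\rho = e_{t_\mu}$ since $\rho$ is a product of transpositions of entries in the same column of $t_\mu$ — a contradiction. (Note that the sign-twist $v$ supplies exactly the extra $-1$ that, in Proposition~\ref{prop:characterToSetFamily}, came instead from the within-entry argument; this is why we do \emph{not} get a contradiction from two symbols $i_j, i_{j'}$ lying in the same entry of $T$, and hence why multisets, not sets, appear.) Having shown each column of $T$ has distinct entries after forgetting indices, I strip the indices from column $j$ to obtain a multiset family $\mathcal{R}_j$ consisting of $\kappa_j'$ distinct multisets each of cardinality $m$, i.e. of shape $(m^{\kappa_j'})$. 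Since $|T|$ is an element of the model for $P(\widetilde M^\kappa)$, the union of all its entries (as a multiset of symbols) is $\Omega^\mu$, which forces the type of the multiset family tuple $(\mathcal{R}_1,\ldots,\mathcal{R}_k)$ to be exactly $\mu$: the number of occurrences of the symbol-number $i$ across all entries equals the number of indices $j$ with $i_j \in \Omega^\mu$, namely $\mu_i'$.

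I anticipate the main obstacle is bookkeeping around the sign-twist: one must be careful that the extra $v$ does not also kill the within-column cancellation (it does not, since $\rho$ is even) and does not interfere with reading off $\mu$ from the union of entries. A secondary subtlety, as in item~(5) of Section~\ref{sec:oddProof}, is that we should run the argument directly with the homomorphism rather than via characters when $m$ is odd, to avoid any circularity; but here the character statement is clean because $Q$ is exact and $\langle \text{char}(\sgn_{S_{mn}}\otimes P(\widetilde M^\kappa)),\chi^\mu\rangle \ge \langle \psi_\nu^{(m^n)},\chi^\mu\rangle \ge 1$, so the existence of the non-zero homomorphism $f$ is immediate. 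The rest is the same index-manipulation as in the proof of Proposition~\ref{prop:characterToSetFamily}, with ``column'' in place of nothing and ``multiset'' in place of ``set''.
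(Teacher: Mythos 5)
Your overall strategy is the paper's: obtain a non-zero homomorphism $f : S^\mu \rightarrow \sgn_{S_{mn}} \otimes P(\widetilde M^\kappa)$, choose a tableau $T$ so that $v \otimes |T|$ has non-zero coefficient in $e_{t_\mu} f$, rule out two entries in the same column of $T$ that agree after forgetting indices, and read off the multiset family tuple column by column; your remark that the sign-twist is precisely what permits repeated numbers inside a single entry (hence multisets rather than sets) is also exactly the paper's point. However, one step as written fails when $m$ is odd: $\rho$ is a product of exactly $m$ disjoint transpositions, so it is even only when $m$ is even. For odd $m$ one has $v\rho = -v$ and $e_{t_\mu}\rho = -e_{t_\mu}$, so neither of the identities you invoke ($v\rho = v$ and $e_{t_\mu}\rho = e_{t_\mu}$) holds. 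The contradiction does survive, but you must carry the sign along, as the paper does: since $\rho \in C(t_\mu)$ we have $e_{t_\mu}\rho = \sgn(\rho)\, e_{t_\mu}$, while $(v \otimes |T|)\rho = \sgn(\rho)\, v \otimes (-|T|) = -\sgn(\rho)(v \otimes |T|)$ because $\rho$ swaps two whole entries in a column of $|T|$; comparing the coefficient $c$ of $v \otimes |T|$ on both sides of $(e_{t_\mu}f)\rho = \sgn(\rho)\, e_{t_\mu}f$ gives $-\sgn(\rho)c = \sgn(\rho)c$, so $c = 0$, a contradiction valid for both parities.

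Two smaller inaccuracies, neither fatal. First, $\psi^{(m^n)}_\nu$ is by definition the character of $Q(S^\nu)$, not of $Q(S^\kappa)$; to conclude that $\chi^\mu$ occurs in the character of $\sgn_{S_{mn}} \otimes P(\widetilde M^\kappa)$ you need the identification $Q(S^\nu) \cong \sgn_{S_{mn}} \otimes P(S^\kappa)$ coming from Equations~\eqref{eqn:QP} and~\eqref{eqn:Specht_sign_twist} (as in the proof of Corollary~\ref{cor:phipsi}), together with exactness of $P$, which makes $\sgn_{S_{mn}} \otimes P(S^\kappa)$ a quotient of $\sgn_{S_{mn}} \otimes P(\widetilde M^\kappa)$. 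Second, the existence of the non-zero homomorphism $f$ follows from semisimplicity of $\Q S_{mn}$ (multiplicity equals $\Hom$-dimension in characteristic zero), not from Frobenius reciprocity. With these repairs your argument coincides with the paper's proof.
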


\begin{proof}
Arguing as in the proof of Proposition~\ref{prop:characterToSetFamily} if $m$ is even and 
as in Remark~(5) in Section~\ref{sec:oddProof} if $m$ is odd,  
there a non-zero $\Q S_{mn}$-module homomorphism $f:  S^\mu \rightarrow \sgn_{S_{mn}}\otimes P(\widetilde{M}^\kappa)$.  
Let $T$ be a set-tableau such that the coefficient of $v \otimes |T|$ in $e_{t_\mu} f$ is non-zero. 
Suppose that there is a column of $T$ containing entries 
$\{ c(1)_{a(1)}, \ldots, c(m)_{a(m)} \}$ and $\{ c(1)_{b(1)}, \ldots, c(m)_{b(m)} \}$
that are equal up the indices attached to numbers. Let
\[ \rho = (c(1)_{a(1)}, c(1)_{b(1)}) \ldots (c(m)_{a(m)}, c(m)_{b(m)}). \] 
Then $e_{t_\mu}\rho =\sgn(\rho) e_{t_\mu}$, whereas 
\[ (v\otimes|T|)\rho=\sgn(\rho)v \otimes(- |T|)=-\sgn(\rho)(v \otimes |T|), \]
since $\rho$ swaps two entries in a column of $T$. 
 It follows that removing
the indices attached to the numbers in
column $j$ of $|T|$ gives a multiset family of shape  $(m^{\kappa_j'})$. The  multiset family tuple obtained has type $\mu$ since the union of the entries in $|T|$ is $\Omega^\mu$.
\end{proof}

The proof of Theorem~\ref{thm:psiMin} is completed in exactly the same manner as that of Theorem~\ref{thm:min}.

\section{Corollaries}\label{sec:corollaries}

In this section we present a number of corollaries of Theorems~\ref{thm:min} and~\ref{thm:max}.
These include a description of the lexicographically least 
 partitions labelling
an irreducible constituent of $\phi^{(m^n)}_\nu$ or $\psi^{(m^n)}_\nu$, confirming two conjectures of Agaoka \cite{Agaoka}.

\subsection{The conjectures of Agaoka}\label{subsec:Agaoka}
Let $\nu$ be a partition of $n$
and set $\kappa = \nu$ if $m$ is even and $\kappa = \nu'$ if $m$ is odd.
Let $k$ be the first part of $\kappa$.
It follows from Theorem~\ref{thm:min}  that
the lexicographically least 
partition $\lambda$ labelling
an irreducible constituent of $\phi^{(m^n)}_\nu$ is the lexicographically
least type of a  set family tuple $(\mathcal{P}_1, \ldots, \mathcal{P}_k)$ such 
that each $\mathcal{P}_j$ has   shape $(m^{\kappa'_j})$. 
We draw an analogous conclusion from Theorem~\ref{thm:psiMin} regarding $\psi^{(m^n)}_\nu$.
We therefore have the following corollary, which was 
conjectured by Agaoka in \cite[Conjecture~2.1]{Agaoka}.

\begin{corollary}\label{cor:lexleast1}
The lexicographically least  
partition labelling an irreducible
constituent of $\phi^{(m^n)}_\nu$ (respectively $\psi^{(m^n)}_\nu\!$)  
is obtained by taking the join of the lexicographically
least partitions labelling an irreducible constituent of each 
$\phi^{(m^{\kappa'_j})}_{(\kappa'_j)}$ (respectively $\psi^{(m^{\kappa'_j})}_{(\kappa'_j)}\!$).
\end{corollary}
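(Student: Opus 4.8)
The plan is to deduce Corollary~\ref{cor:lexleast1} directly from Theorem~\ref{thm:min} and Theorem~\ref{thm:psiMin} by a purely combinatorial analysis of lexicographically least types of set family tuples (and, respectively, multiset family tuples). Since the two cases are formally identical, I shall carry out the argument for $\phi^{(m^n)}_\nu$ and then remark that replacing ``set'' by ``multiset'' everywhere gives the statement for $\psi^{(m^n)}_\nu$. By the displayed consequence of Theorem~\ref{thm:min} already recorded above, the lexicographically least partition $\lambda$ labelling an irreducible constituent of $\phi^{(m^n)}_\nu$ equals the lexicographically least type, over all set family tuples $(\mathcal{P}_1,\ldots,\mathcal{P}_k)$ such that $\mathcal{P}_j$ has shape $(m^{\kappa_j'})$ for each $j$. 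So the entire content of the corollary is the claim that this minimum is achieved by choosing each $\mathcal{P}_j$ independently to be a set family of shape $(m^{\kappa_j'})$ of lexicographically least type, and that the resulting overall type is the join of those individual least types.

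First I would fix notation: write $\mu^{(j)}$ for the lexicographically least type of a set family of shape $(m^{\kappa_j'})$; by Theorem~\ref{thm:min} applied with $\nu$ replaced by the one-row partition $(\kappa_j')$ (for which the relevant parameter $k$ is $1$), $\mu^{(j)}$ is precisely the lexicographically least partition labelling a constituent of $\phi^{(m^{\kappa_j'})}_{(\kappa_j')}$. The key structural observation is additivity of types: if $(\mathcal{P}_1,\ldots,\mathcal{P}_k)$ has type $\lambda$ and each $\mathcal{P}_j$ individually has type $\mu^{(j)}$, then the conjugate $\lambda'$ is the coordinate-wise sum of the conjugates $(\mu^{(j)})'$, which is exactly the statement $\lambda = \mu^{(1)} \sqcup \cdots \sqcup \mu^{(k)}$ in the notation of the paper (the join of partitions, i.e.\ conjugate of the sum of conjugates). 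Thus the tuple formed by juxtaposing lexicographically least set families has type equal to the claimed join, which gives one inequality: the lexicographically least type is $\preceq_{\mathrm{lex}}$ the join.

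For the reverse inequality I need to show no set family tuple can do better. Here I would argue by a ``first column of disagreement'' / exchange argument on the conjugate partitions. Suppose $(\mathcal{P}_1,\ldots,\mathcal{P}_k)$ is an arbitrary admissible tuple with type $\lambda$; since $\lambda' = \sum_j (\tau^{(j)})'$ where $\tau^{(j)}$ is the type of $\mathcal{P}_j$, and since for each $j$ we have $\mu^{(j)} \unlhd \tau^{(j)}$ cannot be strengthened to lex-comparison directly, the cleanest route is: lexicographic order on $\lambda$ is reverse-lexicographic order on $\lambda'$ read suitably, and coordinate-wise sums are monotone for this order. Concretely, each $(\tau^{(j)})'$ is coordinate-wise $\geq$-dominated in the relevant partial sense by $(\mu^{(j)})'$ in the way that makes the total sum lexicographically not smaller; summing these term by term shows $\lambda$ is lexicographically $\geq$ the join. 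I expect this monotonicity bookkeeping — translating ``lexicographically least type of a set family'' into a statement about its conjugate that is stable under addition — to be the main obstacle, since one must be careful that lexicographic minimality of each piece genuinely survives summation rather than merely dominance minimality. Once that is in place, the two inequalities combine to give the corollary, and the parenthetical $\psi$-version follows verbatim with Theorem~\ref{thm:psiMin} in place of Theorem~\ref{thm:min} and multiset families in place of set families.
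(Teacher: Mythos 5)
Your first step coincides with the paper's entire published justification: Theorem~\ref{thm:min}, together with the fact that dominance refines the lexicographic order, identifies the lexicographically least constituent with the lexicographically least type of a set family tuple of shapes $(m^{\kappa_j'})$, and the authors then treat the splitting into individual families as immediate. The part you try to supply is exactly where your argument has gaps. Your reverse inequality rests on the claim that each $(\tau^{(j)})'$ is ``coordinate-wise $\ge$-dominated'' by $(\mu^{(j)})'$ in a way that survives summation; as stated this is false, since lexicographic minimality of $\mu^{(j)}$ gives no coordinatewise comparison of conjugates (for instance $(3,3) <_{\mathrm{lex}} (4,1,1)$, but the conjugates $(2,2,2)$ and $(3,1,1,1)$ are coordinatewise incomparable). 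The statement you actually need is: for partitions of equal size, $\alpha <_{\mathrm{lex}} \beta$ if and only if $\alpha'_x < \beta'_x$ at the \emph{largest} $x$ with $\alpha'_x \ne \beta'_x$; from this, adding a fixed conjugate to both sides preserves lex comparisons, and replacing the families one at a time yields $\lambda \ge_{\mathrm{lex}}$ the join. You flag this ``monotonicity bookkeeping'' as the main obstacle and leave it unproved, so the lower bound is missing. There is a second omission in the same direction: for an arbitrary admissible tuple the individual families need not have well-defined types (their multiplicity vectors need not be weakly decreasing), so the identity $\lambda' = \sum_j (\tau^{(j)})'$ is not available. The fix is in the paper's own toolkit: the lex-least tuple type is dominance-minimal, hence attained by a minimal tuple, which is closed by Section~\ref{subsec:closed}; in a closed tuple each family is closed, so its multiplicity vector is weakly decreasing and equals the conjugate of its type.

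Separately, your identification of $\mu^{(j)}$ (the lex-least type of a set family of shape $(m^{\kappa_j'})$) with the lex-least constituent of $\phi^{(m^{\kappa_j'})}_{(\kappa_j')}$ is justified by ``the relevant parameter $k$ is $1$'', which holds only when $m$ is odd. When $m$ is even and the inner partition is the one-row $(r)$, Theorem~\ref{thm:min} gives $\kappa = (r)$ and $k = r$, a tuple of $r$ families of shape $(m^1)$, and the lex-least constituent of $\phi^{(m^r)}_{(r)}$ is the rectangle $(m^r)$ (compare Corollary~\ref{cor:uniqueminimal}(i)), not $\mu^{(j)}$. Read literally, your even-$m$ argument would then produce the join of rectangles, which contradicts the paper's own example: for $\phi^{(2^4)}_{(2,1,1)}$ that join is $(2,2,2,2)$, whereas the lex-least constituent is $(3,3,2) = (3,3) \sqcup (2)$, the join of the lex-least constituents of $\phi^{(2^3)}_{(1^3)}$ and $\phi^{(2^1)}_{(1)}$. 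The characters whose lex-least constituents really are the $\mu^{(j)}$, and which are the ones evaluated in Corollary~\ref{cor:lexleast2}, are $\phi^{(m^{\kappa_j'})}_{(1^{\kappa_j'})}$ for $m$ even and $\phi^{(m^{\kappa_j'})}_{(\kappa_j')}$ for $m$ odd; your proof needs this parity bookkeeping made explicit (and the corollary read accordingly) before the even case is established. The same corrections are needed, verbatim, in the multiset/$\psi$ version.
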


The lexicographically least set families are given by the
colexicographic order on $m$-subsets on $\N$. This order
is defined on distinct $m$-sets $A$ and $B$ by 
$A < B$ if and only if $\max (A \backslash B) < \max (B \backslash A)$.
Given an $m$-subset $B$ of~$\N$, let $B^{\le}$ denote the initial
segment of the colexicographic order ending at~$B$; that is, $B^{\le} =
\{ A \subseteq \N : |A| = m, A \le B \}$.
If $A$ is an $m$-subset of $\N$, and $r$ is minimal
such that  $r \in A$ and $r+1 \not\in A$, then the successor to $A$
in the colexicographic is the set $B = \{1,\ldots, s\} \cup \{r+1\} \cup (A \backslash \{1,\ldots,r\})$
where~$s$ is chosen so that $|B| = m$. Thus the colexicographic
order minimizes the size of the largest element in $B \backslash A$. It
follows that if $B$ is an $m$-subset of $\N$ then $B^{\le}$ is the lexicographically
least set family of its shape. 

An explicit construction of the lexicographically least set family of shape $(m^n)$
follows from the basic results on the colexicographic order 
in \cite[Chapter 5]{Bollobas}.
Pick $p_1$ such that
$\binom{p_1}{m} \le n < \binom{p_1+1}{m}$ and let $\mathcal{P}^{(1)}$ be the set of all
$m$-subsets of $\{1,2,\ldots, p_1\}$. Then, for each $i \in \{2,\ldots,m\}$ 
such that $n > \sum_{j=1}^{i-1} \binom{p_j}{m+1-j}$,
pick $p_i$ such that 
\[ \binom{p_i}{m+1-i} \le n-\sum_{j=1}^{i-1} \binom{p_j}{m+1-j} < \binom{p_{i}+1}{m+1-i}\] 
and let $\mathcal{P}^{(i)}$ be the union of $\mathcal{P}^{(i-1)}$ with the 
set of all sets of the form
$X \cup \{p_{i-1}+1, \ldots p_1+1\}$ 
where $X$ is a $(m+1-i)$-subset of $\{1,2, \ldots p_i\}$. The process terminates
with $p_1>p_2>\cdots >p_r > 0$ 
such that $n= \sum_{j=1}^{r} \binom{p_j}{m+1-j}$. The final set family
$\mathcal{P}^{(r)}$ has shape $(m^n)$. 

The construction of the lexicographically least multiset family of shape $(m^n)$ is entirely analogous.
Let $\multibinom{q}{m}$ denote the number $\binom{q+m-1}{m}$
of multisets of cardinality $m$ with elements taken from $\{1,\ldots,q\}$.
We adapt the above construction and express $n$ as $\sum_{j=1}^{s} \multibinom{q_j}{m+1-j}$
for $q_1\ge q_2 \ge \cdots  \ge q_s > 0$, with weak inequalities since repetitions are allowed.

\begin{corollary}\label{cor:lexleast2}
Set $\kappa = (1^n)$ if $n$ is even and $\kappa = (n)$ if $m$ is odd.
\begin{thmlist}
\item Let $p_1, \ldots, p_r$ be as just defined.
The lexicographically least partition labelling an irreducible constituent
of $\phi^{(m^n)}_\kappa$ is
\[\bigl( (p_1+1)^{a_1}, p_1^{b_1-a_1}, (p_2+1)^{a_2}, p_2^{b_2-a_2}, \ldots, 
(p_{r-1}+1)^{a_{r-1}}, 
p_{r-1}^{b_{r-1}-a_{r-1}}, p_r^{b_r}\bigr),\] 
where $a_i= n - \sum_{j=1}^i \binom{p_j}{m+1-j}$ 
and $b_i= \binom{p_i-1}{m-i}$ for each $i \in \{1,\ldots, r\}$. 
\item Let $q_1, \ldots, q_s$ be as just defined.
The lexicographically least partition labelling an irreducible constituent
of $\psi^{(m^n)}_\kappa$ is
\[\bigl( (q_1+1)^{c_1}, q_1^{d_1-c_1}, (q_2+1)^{c_2}, q_2^{d_2-c_2}, \ldots, 
(q_{s-1}+1)^{c_{s-1}}, q_{s-1}^{d_{s-1}-c_{s-1}}, q_s^{d_s}\bigr),\] 
where $c_i= n - \sum_{j=1}^i \multibinom{q_j}{m+1-j}$ 
and $d_i= \multibinom{q_i+1}{m-i}$ for each $i \in \{1,\ldots, s\}$.
\end{thmlist}
\end{corollary}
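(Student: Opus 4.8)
The plan is to use Theorems~\ref{thm:min} and~\ref{thm:psiMin} to rewrite the lexicographically least constituent of each character as the type of an explicit initial segment of the colexicographic order, and then to read off that type by a binomial count.

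First I would note that, for the partition $\kappa$ fixed in the statement, the partition called $\kappa$ inside Theorem~\ref{thm:min} (respectively Theorem~\ref{thm:psiMin}) is $(1^n)$ in every case: if $m$ is even it is $\kappa=(1^n)$ directly, and if $m$ is odd it is $\kappa'=(n)'=(1^n)$. Its first part is $1$ and its conjugate is $(n)$, so the set family tuples (respectively multiset family tuples) occurring in those theorems consist of a single member of shape $(m^n)$. Since a lexicographically least constituent is automatically a dominance-minimal constituent — because $\mu\lhd\lambda$ forces $\mu<_{\mathrm{lex}}\lambda$ — Theorem~\ref{thm:min} (respectively Theorem~\ref{thm:psiMin}) identifies the lexicographically least partition labelling a constituent of $\phi^{(m^n)}_\kappa$ (respectively $\psi^{(m^n)}_\kappa$) with the lexicographically least type attained by a set family (respectively multiset family) of shape $(m^n)$; and since every such family's type dominates the type of a minimal one, this least type is attained by the family $\mathcal{P}^{(r)}$ (respectively by the multiset analogue) described in the discussion preceding the corollary. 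So the whole content of the corollary is the computation of the type of $\mathcal{P}^{(r)}$, and of its multiset counterpart.

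For part~(i) I would write $\mathcal{P}^{(r)}$ as the disjoint union of $\mathcal{P}^{(1)}$ with the sets $\mathcal{P}^{(i)}\setminus\mathcal{P}^{(i-1)}$ for $2\le i\le r$, and count, for each $t\in\N$, how many members of each piece contain $t$. A member of $\mathcal{P}^{(1)}$ is an $m$-subset of $\{1,\dots,p_1\}$, so $\binom{p_1-1}{m-1}$ of them contain $t$ once $t\le p_1$; a member of $\mathcal{P}^{(i)}\setminus\mathcal{P}^{(i-1)}$ is $X\cup\{p_1+1,\dots,p_{i-1}+1\}$ with $X$ an $(m+1-i)$-subset of $\{1,\dots,p_i\}$, so $\binom{p_i-1}{m-i}$ of them contain $t$ when $t\le p_i$, all $\binom{p_i}{m+1-i}$ of them contain $t$ when $t=p_j+1$ for some $j<i$, and none otherwise. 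Summing over the pieces, and using that $p_1>\dots>p_r$, the type $\lambda$ of $\mathcal{P}^{(r)}$ satisfies
\[
\lambda'_t \;=\; \sum_{j\,:\,p_j\ge t}\binom{p_j-1}{m-j}\;+\;\Bigl[\,\exists\, l:\ p_l=t-1\,\Bigr]\Bigl(n-\sum_{i=1}^{l}\binom{p_i}{m+1-i}\Bigr).
\]
It is then a direct check that the partition displayed in the statement has exactly this conjugate: from the multiplicities $a_i$ and $b_i-a_i$ one computes that the number of its parts that are $\ge t$ is $\sum_{j:p_j\ge t}b_j+a_l$ when $t-1=p_l$ and $\sum_{j:p_j\ge t}b_j$ otherwise, and $b_j=\binom{p_j-1}{m-j}$, $a_l=n-\sum_{i=1}^l\binom{p_i}{m+1-i}$. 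One also verifies from the defining inequalities of the $p_i$ that $0\le a_i\le b_i$ and that the displayed string of parts is weakly decreasing, so that it really is a partition.

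Part~(ii) is entirely parallel. Under the standard bijection sending a size-$m$ multiset with entries in $\{1,\dots,q\}$ to an $m$-subset of $\{1,\dots,q+m-1\}$, the colexicographic order on multisets goes to the colexicographic order on sets, so the initial-segment multiset family of shape $(m^n)$ has a level-by-level description just like that of $\mathcal{P}^{(r)}$ but with $\multibinom{\;}{\;}$ in place of $\binom{\;}{\;}$ throughout; repeating the occurrence count and conjugating gives the stated partition in terms of the $q_j$, $c_i$, $d_i$. The only real labour is the binomial bookkeeping in the display above, and in particular keeping track of the ``carry'' elements $p_j+1$ (or $q_j+1$) contributed at each level of the cascade; I expect that to be the step most in need of care.
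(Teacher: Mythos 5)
Your overall route is exactly the paper's: by Theorem~\ref{thm:min} (resp.\ Theorem~\ref{thm:psiMin}) the lexicographically least constituent of $\phi^{(m^n)}_\kappa$ (resp.\ $\psi^{(m^n)}_\kappa$) is labelled by the lexicographically least type of a single set family (resp.\ multiset family) of shape $(m^n)$; this is attained by the colexicographic initial segment $\mathcal{P}^{(r)}$ (resp.\ its multiset analogue) constructed just before the corollary; and the corollary reduces to computing that type by counting, for each $x$, its occurrences level by level in the cascade. Your formula for $\lambda'_t$ and the comparison with the number of parts $\ge t$ of the displayed partition agree with the paper's count.

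One step fails as stated, namely the claim that the defining inequalities give $0\le a_i\le b_i$. This is false whenever $p_i=p_{i+1}+1$: for example $m=3$, $n=8$ gives $p_1=4$, $p_2=3$, $p_3=1$, so $a_1=4$ while $b_1=\binom{3}{2}=3$, i.e.\ $b_1-a_1=-1$ (the type here is $(5,5,5,5,3,1)$). The paper handles this by explicitly allowing $b_i-a_i<0$ and decreeing that $(\ldots,p_i^{\,b_i-a_i},(p_{i+1}+1)^{a_{i+1}},\ldots)$ be read as $(\ldots,p_i^{\,b_i-a_i+a_{i+1}},\ldots)$; since your ``number of parts $\ge t$'' identity is really a signed-multiplicity computation, it survives under this convention, so the gap is repairable, but the verification you propose cannot be carried out literally. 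Two smaller points about (ii): the paper likewise notes that $q_i=q_{i+1}$ can occur, after which the displayed parts must be rearranged to be weakly decreasing; and the passage from sets to multisets is not a verbatim substitution of $\multibinom{\cdot}{\cdot}$ for $\binom{\cdot}{\cdot}$, because the type of a multiset family counts occurrences with multiplicity --- the needed identity is $\sum_{\ell\ge 1}\multibinom{q_i}{m+1-i-\ell}=\multibinom{q_i+1}{m-i}=d_i$, which your phrase ``occurrence count'' gestures at but does not supply.
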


\begin{proof} 
Let $\lambda$ be the partition in (i). 
We note that it is possible that $p_i = p_{i+1} + 1$ for one or more indices $i$; in this case
 $b_i-a_i$ may be negative, and
\[ (\ldots ,p_i^{b_i-a_i}, (p_{i+1}+1)^{a_{i+1}}, \ldots ) \] 
should be
interpreted as $(\ldots, p_i^{b_i-a_i+a_{i+1}}, \ldots )$. By Theorem~\ref{thm:min}, it 
is sufficient to prove
that $\lambda$ is the type of the lexicographically least set family
of shape~$(m^n)$, as constructed above. Of course this also shows that $\lambda$
is a well-defined partition.


Let $1 \le x \le p_1+1$.
Note that if $x \le p_j$ then $x$ is contained in exactly~$b_j$ sets in
$\mathcal{P}^{(j)} \backslash \mathcal{P}^{(j-1)}$.
It follows that if $p_{j+1} + 1 < x \le p_{j}$ then $x$ lies
in $b_1 + \cdots + b_{j}$ sets in $\mathcal{P}^{(j)}$ and in
no other sets in $\mathcal{P}^{(r)}$. This is the number of parts of
$\lambda$ not less than~$x$.
If $x = p_{i}+1$ then $x \le p_{i-1}$ and so $x$ lies in $b_1 + \cdots + b_{i-1}$ sets
in $\mathcal{P}^{(i-1)}$ and also in all $a_i$ sets in $\mathcal{P}^{(r)} \backslash
\mathcal{P}^{(i)}$. Hence the total multiplicity of $x$ is $b_1 + \cdots + b_{i-1} + a_i$,
which is again the number of parts of $\lambda$ not less than~$x$.


The proof of (ii) is similar, replacing sets with multisets,
and noting that if $x \in \{1, \ldots, q_1\}$ then the number of multisubsets of
$\{1,\ldots,q_1\}$
 of cardinality $m$ that contain $x$ with multiplicity at least $\ell$ 
is $\multibinom{q_1}{m-\ell}$, and so 
the number of occurrences of $x$ in
all multisubsets of $\{1,\ldots, q_1\}$ of cardinality $m$ is given by 
$\smash{\sum_{\ell=1}^{m}\multibinom{q_1}{m-\ell} = \multibinom{q_1+1}{m-1}}$.
We note that it is possible that $q_{i}=q_{i+1}$ for one or more indices and, in this case, it will be necessary to rearrange the 
parts in the expression given in (ii) to ensure that it is weakly decreasing.
\end{proof}

This result was conjectured by Agaoka in \cite[Conjecture~4.2]{Agaoka}.

Agaoka also conjectured the form of the  lexicographically greatest 
Schur function appearing in $s_\nu \circ s_\mu$ in \cite[Conjecture 1.2]{Agaoka}. This was proved by was Iijima in \cite[Theorem 4.2]{Iijima}. 
Our results provide an alternative proof in the cases $\mu=(n)$ and $(1^n)$. 
Suppose that $\nu$
has exactly $k$ parts and largest part~$\ell$. 
By Theorem~\ref{thm:max}, the lexicographically greatest 
constituent of $\phi^{(m^n)}_\nu$ is $\chi^{((m-1)n+\nu_1, \nu_2, \ldots, \nu_k)}$,  
corresponding to the closed  multiset family tuple with lexicographically greatest 
 conjugate type, namely
$(\mathcal{Q}_1, \ldots, \mathcal{Q}_\ell)$ where
\[ \mathcal{Q}_i=\bigl\{ \{1, \ldots, 1,1\}, \{1, \ldots, 1,2\}, \ldots, \{1, \ldots, 1,\nu_i'\}\bigr\} \] 
for each $i$.
Similarly, by Theorem~\ref{thm:min} and Equation~\eqref{eqn:sign_twist}, 
the lexicographically greatest 
 constituent of $\psi^{(m^n)}_\nu$ is $\chi^{(n^{m-1},\nu_1, \nu_2, \ldots, \nu_k)}$, 
  corresponding to the  set family tuple $(\mathcal{P}_1, \ldots, \mathcal{P}_\ell)$ where 
\[ \mathcal{P}_i=\bigl\{ \{1,2, \ldots, m-1,m\}, \{1,2, \ldots, m-1,m+1\}, \ldots, \{1,2, \ldots, m-1,m+\nu_i'\} \bigr\} \] 
for each $i$.

\subsection{Unique minimal or maximal constituents}
It is natural to ask when $\phi^{(m^n)}_\nu$ has a unique minimal or maximal constituent. This
is easily answered using our results.

\begin{corollary}\label{cor:uniqueminimal}
Let $\nu$ be a partition of $n$. If $m=1$ then $\phi^{(m^n)}_\nu = \chi^\nu$.
If $m > 1$ then $\phi^{(m^n)}_\nu$ has $\chi^\lambda$ as a unique minimal constituent
if and only if 
\begin{itemize}
\item[(i)] $m$ is even, $\nu = (n)$ and $\lambda = (m^n)$;
\item[(ii)] $m$ is even, $\nu = (n-r,r)$ 
and $\lambda = \bigl((m+1)^r,m^{n-2r},(m-1)^r\bigr)$ where $1 \le r \le n/2$;
\item[(iii)] $m$ is odd, $\nu = (1^n)$ and $\lambda = (m^n)$;
\item[(iv)] $m$ is odd, $\nu = (2^r,1^{n-2r})$
and $\lambda = \bigl((m+1)^r,m^{n-2r},(m-1)^r\bigr)$ where $1 \le r \le n/2$.
\end{itemize}
\end{corollary}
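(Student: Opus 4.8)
The plan is to derive the corollary from Theorem~\ref{thm:min}. The claim for $m=1$ is trivial, since then $S_m\wr S_n=S_n=S_{mn}$ and $\phi^{(1^n)}_\nu=\chi^\nu$. Assume $m>1$; let $\kappa$ and $k=\kappa_1$ be as in Theorem~\ref{thm:min}, and write $[r]=\{1,\dots,r\}$. By Theorem~\ref{thm:min}, $\chi^\lambda$ is a minimal constituent of $\phi^{(m^n)}_\nu$ exactly when $\lambda$ is $\unlhd$-minimal among the types realizable by set family tuples $(\mathcal{P}_1,\dots,\mathcal{P}_k)$ with $\mathcal{P}_j$ of shape $(m^{\kappa_j'})$. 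Since the type $\lambda$ of such a tuple is recorded by its conjugate $\lambda'$, whose $i$th part is the number of sets in the tuple containing $i$, and dominance reverses under conjugation, $\phi^{(m^n)}_\nu$ has a unique minimal constituent if and only if the set of realizable \emph{multiplicity vectors} $c=(c_1\ge c_2\ge\cdots)$ has a unique $\unrhd$-maximal element. Finally one checks that cases (i)--(iv) are precisely the cases in which $\kappa$ has at most two parts (using $\kappa=\nu$ for $m$ even, $\kappa=\nu'$ for $m$ odd); equivalently, in which every shape $(m^{\kappa_j'})$ equals $(m)$ or $(m^2)$.

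Next I would introduce the candidate. Let $\ell$ be the number of parts of $\kappa$ and set $v=(n^{m-1},\kappa_1,\kappa_2,\dots,\kappa_\ell)$. Its parts sum to $(m-1)n+|\kappa|=mn$, and a direct check (splitting into the two possible shapes of the $\mathcal{P}_j$) shows that $v'$ is exactly the partition $\lambda$ in (i)--(iv). Moreover $v$ is realized by the ``all stars'' tuple, in which $\mathcal{P}_j=\{[m],\,[m-1]\cup\{m+1\},\dots,[m-1]\cup\{m+\kappa_j'-1\}\}$. The crucial claim is that $v$ is always $\unrhd$-maximal. Indeed, suppose $c$ is realizable with $c\unrhd v$. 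The whole tuple has only $\sum_j\kappa_j'=n$ sets, so $c_i\le n$ for all $i$, which together with $c\unrhd v$ forces $c_1=\dots=c_{m-1}=n$; hence every set in every $\mathcal{P}_j$ is $[m-1]\cup\{x_S\}$ for some $x_S\notin[m-1]$, the elements $x_S$ with $S\in\mathcal{P}_j$ are pairwise distinct, and the remaining parts of $c$ are the sorted degree sequence of the set system $(D_1,\dots,D_k)$ with $D_j=\{x_S:S\in\mathcal{P}_j\}$ and $|D_j|=\kappa_j'$. Since the nested system $D_j=[\kappa_j']$ maximizes the degree sequence in dominance (Gale--Ryser), with degree sequence $(\kappa_1,\dots,\kappa_\ell)$, we obtain $(c_m,c_{m+1},\dots)\unlhd(\kappa_1,\dots,\kappa_\ell)$, so $c\unlhd v$ and thus $c=v$. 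In particular $\lambda=v'$ is always a minimal constituent of $\phi^{(m^n)}_\nu$.

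Suppose now that $\kappa$ has at most two parts, so $\kappa_1+\dots+\kappa_\ell=n$ and $v$ is $(n^{m-1},\kappa_1)$ or $(n^{m-1},\kappa_1,\kappa_2)$; its first $j$ parts sum to $jn$ for $j\le m-1$, to $(m-1)n+\kappa_1$ for $j=m$, and to $mn$ for $j\ge m+1$. For \emph{any} realizable $c$ one has: the sum of the first $j$ parts is at most $jn$ for $j\le m-1$ (as $c_i\le n$); it is at most $mn$ for $j\ge m+1$; and it is at most $(m-1)n+\kappa_1$ for $j=m$, by a deficiency count: if $E$ is a set of $m$ elements of largest multiplicity, each $\mathcal{P}_j$ contributes $\sum_{S\in\mathcal{P}_j}|S\cap E|\le m+(\kappa_j'-1)(m-1)=\kappa_j'(m-1)+1$, because at most one of its $\kappa_j'$ distinct $m$-sets can equal $E$, and these contributions sum to $(m-1)n+k=(m-1)n+\kappa_1$. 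Hence $c\unlhd v$ for every realizable $c$, so $v$ is the unique $\unrhd$-maximal multiplicity vector and $\lambda$ is the unique minimal constituent. Reading off $\kappa=\nu$ or $\nu'$ according to the parity of $m$ recovers exactly cases (i)--(iv).

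It remains to treat $\kappa$ with at least three parts; then $\kappa_1'=\ell\ge3$, so $\mathcal{P}_1$ has shape $(m^N)$ with $N\ge3$, and I would produce a second minimal constituent. Take $\mathcal{P}_2,\dots,\mathcal{P}_k$ as in the tuple above, and let $\mathcal{P}_1$ consist of the three sets $[m-2]\cup\{m-1,m\}$, $[m-2]\cup\{m-1,m+1\}$, $[m-2]\cup\{m,m+1\}$ together with the $N-3$ sets $[m-1]\cup\{m+2\},\dots,[m-1]\cup\{m+N-2\}$. One computes that, for the resulting multiplicity vector $c^\star$, the sum of the first $m-1$ parts equals $(m-2)n+(n-1)<(m-1)n$ (at most $m-2$ elements lie in all $n$ sets, since the three displayed sets intersect in $[m-2]$), while the largest $m+1$ parts of $c^\star$ are $n$ (with multiplicity $m-2$), then $n-1$, $\kappa_1+1$, $\kappa_2+1$ — weakly decreasing because $\ell\ge3$ forces $\kappa_1\le n-2$ — summing to $(m-1)n+\kappa_1+\kappa_2+1$, which exceeds the sum $(m-1)n+\kappa_1+\kappa_2$ of the first $m+1$ parts of $v$. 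Thus $c^\star$ is incomparable with $v$; as $v$ is $\unrhd$-maximal and $c^\star\not\unlhd v$, any $\unrhd$-maximal multiplicity vector dominating $c^\star$ is different from $v$, and so yields a minimal constituent of $\phi^{(m^n)}_\nu$ other than $\lambda$. I expect the main obstacle to be the $\unrhd$-maximality of $v$ (the Gale--Ryser step and the deficiency count), and the bookkeeping — uniform in $m\ge2$ and $N\ge3$ — establishing the incomparability of $c^\star$ with $v$.
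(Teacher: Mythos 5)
Your proposal is correct, and every step I checked goes through: the reduction via Theorem~\ref{thm:min} to uniqueness of a dominance-maximal realizable multiplicity vector, the identification of $v'=\lambda$ with the partitions in (i)--(iv), the Gale--Ryser-type bound $\sum_{x\in F}d_x\le\sum_j\min(\kappa_j',t)=\kappa_1+\cdots+\kappa_t$ giving maximality of $v$, the deficiency count $\sum_{S\in\mathcal{P}_j}|S\cap E|\le\kappa_j'(m-1)+1$ giving $c\unlhd v$ when $\kappa$ has at most two parts, and the incomparability of the perturbed tuple's vector $c^\star$ with $v$ when $\kappa$ has at least three parts (the only implicit step is relabelling elements of $\N$ so that a sorted multiplicity vector becomes a bona fide type, which is harmless). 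However, your route differs from the paper's. The paper's proof is a few lines: it invokes the closure property of Section~\ref{subsec:closed} (minimal set family tuples are closed, componentwise), observes that for shapes $(m^1)$ and $(m^2)$ the closed set family is \emph{unique} -- which immediately forces a unique minimal tuple, hence a unique minimal constituent, when every $\kappa_j'\le 2$ -- and, for a shape $(m^r)$ with $r\ge 3$, exhibits two minimal set families of different types (the colexicographic initial segment and the ``star'' family $\{X\cup\{m\},\ldots,X\cup\{m+r-1\}\}$ with $X=\{1,\ldots,m-1\}$) to kill uniqueness. You never use closedness: instead you prove dominance inequalities directly, which costs more work (your argument is substantially longer than the paper's) but is self-contained at this point and yields a stronger intermediate fact not recorded in the paper's proof, namely that the type $v'$ of the ``all stars'' tuple is a minimal constituent of $\phi^{(m^n)}_\nu$ for \emph{every} $\nu$, together with an explicit quantitative reason (the position-$m$ partial-sum bound $(m-1)n+\kappa_1$) why no other type can compete when $\kappa$ has at most two parts.
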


\begin{proof}
Suppose that $m > 1$ and $r \ge 3$. 
Let $\mathcal{P}$ be the lexicographically
least 
set family of shape $(m^r)$. Let $X = \{1,\ldots,m-1\}$ and let 
\[ \mathcal{R} = \bigl\{ X \cup \{m\}, X \cup \{m+1\}, \ldots, X \cup \{m+r-1\} \bigr\}. \]
It is easily seen that $\mathcal{R}$ is a 
minimal set family and that $\mathcal{P}$ and $\mathcal{R}$ have different types.
If $r \le 2$ then there is a unique closed set family of shape $(r^2)$.
It now follows from Theorem~\ref{thm:min} that if $m$ is even
then  $\phi^{(m^n)}_\nu$ has a unique minimal constituent, of
the type claimed in (i) and (ii), if and only
if $\nu'_1 \le 2$. The proof is similar when $m$ is odd.
\end{proof}

\begin{corollary}\label{cor:uniqueMaximal}
Let $\nu$ be a partition of $n$.
If $m > 1$ then $\phi^{(m^n)}_\nu$ has
a unique maximal constituent  if and only if $\nu$ has at most two rows. The unique maximal constituent of 
$\phi^{(m^n)}_{(n)}$ is $\chi^{(mn)}$ and the unique maximal constituent of
\smash{$\phi^{(m^n)}_{(n-r,r)}$ is $\chi^{(mn-r,r)}$}.
\end{corollary}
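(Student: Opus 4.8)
The plan is to reduce the statement, via Theorem~\ref{thm:max}, to a question about the dominance order on "occurrence partitions" of multiset family tuples. Write $\ell = \nu_1$, so the relevant shape-tuple is $\bigl((m^{\nu_1'}), \dots, (m^{\nu_\ell'})\bigr)$; given a multiset family tuple $\mathcal{T}$ of this shape-tuple, let $o(\mathcal{T})$ be the partition whose parts are the occurrence counts of the natural numbers appearing in $\mathcal{T}$, so that the type of $\mathcal{T}$ is the conjugate $o(\mathcal{T})'$. Since conjugation reverses dominance, Theorem~\ref{thm:max} says that $\chi^\lambda$ is a maximal constituent of $\phi^{(m^n)}_\nu$ exactly when $\lambda = o(\mathcal{T})$ for some tuple $\mathcal{T}$ whose occurrence partition is $\unrhd$-maximal among the finitely many realizable ones. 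First I would observe that a finite poset has a unique maximal element if and only if it has a greatest element, and that the lexicographically greatest occurrence partition — which is automatically $\unrhd$-maximal — equals $\lambda^{\max} := \bigl((m-1)n + \nu_1, \nu_2, \dots, \nu_k\bigr)$, as already computed in Section~\ref{subsec:Agaoka} for the "chain" tuple $(\mathcal{Q}_1, \dots, \mathcal{Q}_\ell)$ with $\mathcal{Q}_i = \{\{1^m\}, \{1^{m-1},2\}, \dots, \{1^{m-1}, \nu_i'\}\}$. Thus the corollary reduces to showing that $\lambda^{\max} \unrhd o(\mathcal{T})$ for every realizable $o(\mathcal{T})$ if and only if $\nu$ has at most two rows; and in that case the unique maximal constituent is $\chi^{\lambda^{\max}}$, which unwinds to $\chi^{(mn)}$ and $\chi^{(mn-r,r)}$.

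For the "if" direction, suppose $\nu = (n-r,r)$ with $0 \le r \le n/2$ (the case $r=0$ being $\nu=(n)$). A single multiset of size $m$ has occurrence partition $\unlhd (m)$, and a pair of distinct size-$m$ multisets has occurrence partition $\unlhd (2m-1,1)$: the first part cannot be $2m$ by distinctness, and a partition of $2m$ with first part at most $2m-1$ is automatically dominated by $(2m-1,1)$. Since dominance is preserved under componentwise addition of partitions, and the shape-tuple consists of $r$ copies of $(m^2)$ together with $n-2r$ copies of $(m^1)$, every realizable $o(\mathcal{T})$ is dominated by $r(2m-1,1) + (n-2r)(m) = (mn-r,r) = \lambda^{\max}$, as required.

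For the "only if" direction I would argue the contrapositive. If $\nu$ has at least three rows then $s := \nu_1' \ge 3$; I would then take the tuple $\mathcal{T}$ obtained from the chain tuple by replacing only its first component by
\[ \mathcal{C} = \bigl\{ \{1^m\}, \{1^{m-1},2\}, \{1^{m-2},2,2\}, \{1^{m-1},3\}, \dots, \{1^{m-1}, s-1\} \bigr\}, \]
a closed multiset family of shape $(m^s)$ (valid because $m > 1$ and $s \ge 3$). Compared with the chain family of shape $(m^s)$, the family $\mathcal{C}$ loses exactly one $1$ and gains two extra $2$s, so a direct count gives that the first two parts of $o(\mathcal{T})$ are $(m-1)n + \nu_1 - 1$ and $\nu_2 + 2$, whence the second partial sum of $o(\mathcal{T})$ is $(m-1)n + \nu_1 + \nu_2 + 1$, strictly larger than the corresponding partial sum $(m-1)n + \nu_1 + \nu_2$ of $\lambda^{\max}$. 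Hence $o(\mathcal{T}) \not\unlhd \lambda^{\max}$, so $\lambda^{\max}$ is not the greatest realizable occurrence partition, and $\phi^{(m^n)}_\nu$ has more than one maximal constituent.

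I expect the main obstacle to be getting the reduction step entirely clean: one has to track carefully the (conjugate) relationship between a tuple's "type" and its occurrence partition, and establish that having a \emph{unique} maximal constituent is the same as $\lambda^{\max}$ being a $\unrhd$-\emph{greatest} (not merely maximal) realizable occurrence partition, using finiteness of the poset. After that, both directions are short counting arguments; the only minor thing to verify in the "only if" direction is that $\mathcal{C}$ is genuinely closed and of shape $(m^s)$ in all cases, including the boundary cases $m = 2$ and $s = 3$.
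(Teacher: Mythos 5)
Your proposal is correct, but it takes a genuinely different route from the paper's. The paper proves this corollary by mirroring its proof of Corollary~\ref{cor:uniqueminimal}: when $\nu_1' \le 2$ there is a unique closed multiset family of each shape $(m^1)$ and $(m^2)$, hence a unique closed (so unique minimal) multiset family tuple, and its type is computed directly to give $\chi^{(mn)}$ or $\chi^{(mn-r,r)}$; when $\nu_1' \ge 3$ the lexicographically least multiset family of shape $(m^{\nu_1'})$ and the family $\bigl\{\{1,\ldots,1,1\},\ldots,\{1,\ldots,1,\nu_1'\}\bigr\}$ are two minimal multiset families of different types, yielding two maximal constituents via Theorem~\ref{thm:max}. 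You instead recast Theorem~\ref{thm:max} in terms of occurrence partitions, observe that a finite poset has a unique maximal element exactly when it has a greatest element, and then check directly whether the lexicographically greatest realizable occurrence partition $\lambda^{\max}=((m-1)n+\nu_1,\nu_2,\ldots,\nu_k)$ from Section~\ref{subsec:Agaoka} dominates every realizable one. Your version buys an explicit dominance bound in the two-row case (and re-derives $(mn-r,r)$ without classifying closed families of shape $(m^2)$), at the cost of more bookkeeping in the reduction; the paper's version is shorter because it leans on the uniqueness of small closed multiset families.

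Two points need tightening, neither fatal. First, in the two-row case your inequality $o(\mathcal{T}) \unlhd r(2m-1,1)+(n-2r)(m)$ uses more than monotonicity of componentwise addition: the occurrence partition of the tuple is the \emph{sorted pointwise sum} of the components' occurrence vectors, and the elements carrying the large counts in different components need not coincide; you need the (easy, one-line, but unstated) fact that this sorted sum is dominated by the componentwise sum of the components' sorted occurrence partitions, via ``the $j$ largest entries of a sum are at most the sum of the $j$ largest entries of each summand''. Second, replacing the chain family by $\mathcal{C}$ loses one occurrence of $1$, gains two occurrences of $2$, \emph{and} loses one occurrence of $\nu_1'$ (as stated, your tally would change the total size $m\nu_1'$); this does not affect the conclusion, since the two largest occurrence counts of your tuple are still $(m-1)n+\nu_1-1$ and $\nu_2+2$, so the second partial sum still exceeds that of $\lambda^{\max}$. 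Also, closedness of $\mathcal{C}$ is not actually needed for your argument --- mere realizability of its occurrence partition suffices.
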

\begin{proof}
Let $\mathcal{P}$ be the  lexicographically
least  
multiset family of shape $(m^r)$, 
let $\mathcal{R} = \bigl\{ \{1,1, \ldots,1,  1\},\{1,1, \ldots,1,  2\}, \ldots, \{1,1, \ldots,1,  r\}
 \bigr\}$,
and argue as in Corollary~\ref{cor:uniqueminimal}, replacing Theorem~\ref{thm:min} with Theorem~\ref{thm:max}.
\end{proof}

\subsection{Further constituents}\label{subsec:further} 
We remark  that since there are closed set families and closed multiset families that are not minimal,
Corollary~\ref{cor:even} is not implied by Theorem~\ref{thm:min} and neither is Corollary~\ref{cor:phipsi} implied
by Theorem~\ref{thm:max}.
For example, let $\mathcal{P}_1$ denote those 2-sets majorized by $\{2,4\}$
 and let $\mathcal{P}_2$ be those  majorized by $\{1,5\}$. Let $\mathcal{R}_1$ be obtained from $\mathcal{P}_1$
 by replacing $\{2,4\}$ with $\{1,5\}$, and let $\mathcal{R}_2$ be obtained from $\mathcal{P}_2$ by replacing
 $\{1,5\}$ with $\{2,3\}$.
  Then the set family tuple $(\mathcal{P}_1 ,  \mathcal{P}_2)$ is closed but not minimal 
  since  $(\mathcal{R}_1, \mathcal{R}_2)$
has strictly smaller type.

\subsection{Rectangular partitions}\label{subsec:rectangular}

As in Section~\ref{subsec:Schur},
let $\Delta^\lambda$ be the Schur functor corresponding to the partition $\lambda$. 
Let $a$, $b \in \N$. By Section~\ref{subsec:Schur},
$\chi^{(a^b)}$ is a constituent of $\phi^{(m^n)}_\nu$
if and only if $\Delta^{(a^b)}(E)$ appears in $\Delta^\nu \bigl( \Sym^m \! E \bigr)$, where
$E$ is a rational vector space of dimension at least~$b$. If~$E$ has dimension exactly $b$
then $\Delta^{(a^b)}(E) \cong (\bigwedge^b E)^{\otimes a}$ and so $\Delta^{(a^b)}(E)$ affords
the polynomial representation $g \mapsto \det(g)^a$ of $\GL(E)$. It follows that
there is a non-zero $\SL(E)$-invariant subspace of $\Delta^\nu \bigl( \Sym^m \! E \bigr)$. 
This observation motivates the following result.

\begin{corollary}\label{cor:rectangular}
Let $a \in \N$ be such that $a \ge m$. 
\begin{thmlist}
\item   If $m$ is odd let $\nu$ denote the partition $(\binom{a}{m}, \ldots, \binom{a}{m})$ where there are exactly
$k$ parts and, if $m$ is even, let $\nu$ denote the conjugate of this partition.
Set $ n = k\binom{a}{m}$ and $b = k\binom{a-1}{m-1}$. 
Then 
\[ \langle \phi^{(m^n)}_\nu, \chi^{(a^b)} \rangle \ge 1. \]
\item Let $\nu$ denote $(\multibinom{a}{m}, \ldots, \multibinom{a}{m})$ where there are exactly $k$ parts. Set $n = k \multibinom{a}{m}$ and $b = k\multibinom{a+1}{m-1}$.  
Then 
\[ \langle \phi^{(m^n)}_\nu, \chi^{(b^a)} \rangle \ge 1. \]
\end{thmlist}
\end{corollary}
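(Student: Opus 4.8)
\textbf{Proof proposal for Corollary~\ref{cor:rectangular}.}
The plan is to apply Corollary~\ref{cor:even} (and its odd analogue, listed as item (4) of Section~\ref{sec:oddProof}) for part (i), and Corollary~\ref{cor:phipsi}(ii) for part (ii). In each case it suffices to exhibit a \emph{closed} set family tuple (respectively multiset family tuple) of the prescribed type, where the shapes of the component families are dictated by the partition $\nu$ and the plethysm functor. First I would treat the single-block case $k=1$, so that $\nu=(\binom{a}{m})$ in part (i) with $m$ odd. Here the natural candidate is the full family $\mathcal{P}=\binom{[a]}{m}$ of all $m$-subsets of $\{1,\ldots,a\}$; this family is visibly closed under majorization, since if $A\preceq B$ and $B\subseteq[a]$ then $A\subseteq[a]$ too. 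One then computes its type: an element $i\in[a]$ lies in exactly $\binom{a-1}{m-1}$ of the $m$-subsets of $[a]$, so the conjugate-type partition has its first $\binom{a-1}{m-1}$ parts equal to $\binom{a}{m}$ (one for each $i\in[a]$), i.e.\ the type is $(a^{\binom{a-1}{m-1}})$. Taking $k$ disjoint copies (on disjoint ground sets, or rather a single tuple of $k$ such families, each of shape $(m^{\binom{a}{m}})$) gives a closed set family tuple of type $(a^{k\binom{a-1}{m-1}})=(a^b)$, and Corollary~\ref{cor:even} (or its odd analogue) yields $\langle\phi^{(m^n)}_\nu,\chi^{(a^b)}\rangle\ge 1$. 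One must double-check that the shape bookkeeping matches: with $\kappa=\nu$ when $m$ even and $\kappa=\nu'$ when $m$ odd, the hypothesis of the corollary requires $\mathcal{P}_j$ to have shape $(m^{\kappa'_j})$; the partition $\nu$ in the statement has been chosen precisely so that $\kappa=(\binom{a}{m}^k)$ in both parities, whence $\kappa'_j=k$ for $1\le j\le\binom{a}{m}$ — wait, that is the transpose count — so each of the $k$ families should have shape $(m^{\binom{a}{m}})$, which is exactly what the full family $\binom{[a]}{m}$ provides.

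For part (ii) the argument is the multiset analogue, invoking Corollary~\ref{cor:phipsi}(ii): if there is a closed multiset family tuple $(\mathcal{Q}_1,\ldots,\mathcal{Q}_\ell)$ of type $\lambda$ with $\mathcal{Q}_i$ of shape $(m^{\nu'_i})$, then $\langle\phi^{(m^n)}_\nu,\chi^{\lambda'}\rangle\ge 1$. Here I would take $\mathcal{Q}$ to be the family of \emph{all} size-$m$ multisubsets of $\{1,\ldots,a\}$, which is closed under majorization for the same reason, and which has $\multibinom{a}{m}$ elements. The number of occurrences of a fixed $i\in[a]$ among all such multisets is $\sum_{\ell\ge 1}\multibinom{a}{m-\ell}=\multibinom{a+1}{m-1}$ by the hockey-stick identity for multiset coefficients already used in the proof of Corollary~\ref{cor:lexleast2}(ii). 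So the conjugate type of $\mathcal{Q}$ is $(a^{\multibinom{a+1}{m-1}})$, and a tuple of $k$ copies has conjugate type $(a^{k\multibinom{a+1}{m-1}})=(a^b)$; that is, the type $\lambda$ in Corollary~\ref{cor:phipsi}(ii) is $(a^b)$, so $\lambda'=(b^a)$ and we conclude $\langle\phi^{(m^n)}_\nu,\chi^{(b^a)}\rangle\ge 1$. Again the shape condition forces $\nu'_i=\multibinom{a}{m}$ for each of the $\ell$ relevant $i$, matching the stated $\nu=(\multibinom{a}{m}^k)$.

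The conceptually light parts are the closure of the full (multi)set family — immediate from the definition of majorization — and the counting identities, which are standard. The only genuine care needed is the bookkeeping in the previous two paragraphs: matching which of $\nu$ or $\nu'$ plays the role of $\kappa$ under the parity convention of Corollary~\ref{cor:even}/Corollary~\ref{cor:phipsi}, and checking that "shape $(m^{\kappa'_j})$" (resp.\ "shape $(m^{\nu'_i})$") is exactly the shape of the full family on $[a]$. This is why the statement of the corollary conjugates $\nu$ when $m$ is even in part (i): it ensures $\kappa$ is the rectangle $(\binom{a}{m}^k)$ in both parities, so that the $k$ component families each have shape $(m^{\binom{a}{m}})$. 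I expect the main (and only) obstacle to be presenting this index-matching cleanly; there is no hard analysis, only the risk of an off-by-a-transpose error. The $\SL(E)$-invariance remark preceding the corollary is not needed in the proof — it is motivational — so I would not reprove it, merely note that part (i) with $b=\dim E$ exhibits the invariant line $(\bigwedge^b E)^{\otimes a}$ inside $\Delta^\nu(\Sym^m E)$ concretely.
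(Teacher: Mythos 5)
Your proposal is correct and takes essentially the same route as the paper: both exhibit the tuple of $k$ copies of the full family of all $m$-(multi)subsets of $\{1,\ldots,a\}$, note that it is clearly closed, compute its type to be $(a^{k\binom{a-1}{m-1}})$ (respectively $(a^{k\multibinom{a+1}{m-1}})$, via the same counting as in Corollary~\ref{cor:lexleast2}), and then invoke Corollary~\ref{cor:even} together with its odd analogue for part (i) and Corollary~\ref{cor:phipsi}(ii) for part (ii). Your momentary transpose hesitations in the shape bookkeeping resolve to exactly the identification the paper itself uses, so nothing further is needed.
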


\begin{proof}
Consider the  set family tuple $(\mathcal{P}, \ldots, \mathcal{P})$ where $\mathcal{P}$ consists of all $m$-subsets of $\{1,\ldots, a\}$. The shape of $\mathcal{P}$ is $(m^{\binom{a}{m}})$ and the type of the  set family tuple is $(a^{k\binom{a-1}{m-1}})$. Since $\mathcal{P}$ is clearly closed,
the first statement in the corollary now follows from Corollary~\ref{cor:even}, and its analogue for $m$ odd.
Replacing~$\mathcal{P}$ with the set of all multisets of cardinality $m$ with entries taken from $\{1,\ldots, a\}$,
the counting argument in the proof of Corollary~\ref{cor:lexleast2} shows that
we obtain a multiset family tuple of type $(a^{k\multibinom{a+1}{m-1}})$.
 The second statement now
follows similarly from Corollary~\ref{cor:phipsi}(ii).
\end{proof}

For example, $\phi^{(3^8)}_{(4,4)}$ contains $\chi^{(4^6)}$; the corresponding
set family is $\mathcal{P} = \bigl\{ \{1,2,3\}, \{1,2,4\}, \{1,3,4\}, \{2,3,4\} \bigr\}$.
In fact, by Section~\ref{subsec:Agaoka},
every closed  set family tuple or closed multiset family tuple whose type is a rectangular partition
arises from the construction in Corollary~\ref{cor:rectangular}. We note
that in general there are further constituents of $\phi^{(m^n)}_\nu$ 
labelled by rectangular partitions that are not given by this construction.
For example, $\chi^{(4^2)}$ appears in $\phi^{(2^4)}_{(2,2)}$.

\subsection{The decomposition of $\phi^{(2^n)}_{(1^n)}$.}
Let $\theta_n = \phi^{(2^n)}_{(1^n)}$. Remarkably every constituent of $\theta_n$
is both minimal and maximal. We end by proving this as part of the following corollary, which gives
a new proof of the decomposition
of $\phi^{(2^n)}_{(1^n)}$. A notable feature of this proof is
that each constituent is determined by an explicitly defined homomorphism.
For an earlier proof of Corollary~\ref{cor:hook1} using
symmetric functions see \cite[I.~8, Exercise~6(d)]{Macdonald}. 

Given a partition $\alpha$ of $n$ with distinct parts $(\alpha_1,\ldots,\alpha_r)$,
let $2[\alpha]$ denote the partition $\lambda$ of $2n$ such that the leading diagonal
hook-lengths of $\lambda$ are $2\alpha_1,\ldots,2\alpha_r$ and $\lambda_i = 
\alpha_i + i$ for $1 \le i \le r$. 

\begin{corollary}\label{cor:hook1}
For any $n \in \N$ we have
\[ \theta_n = \sum_\alpha \chi^{2[\alpha]} \]
where the sum is over all such partitions $\alpha$ of $n$ with distinct parts.
\end{corollary}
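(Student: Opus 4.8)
The plan is to apply Theorems~\ref{thm:min} and~\ref{thm:max} with $m=2$ and $\nu=(1^n)$, so that (since $m$ is even) $\kappa=\nu=(1^n)$, $k=\kappa_1=1$, and $\kappa'=(n)$; thus Theorem~\ref{thm:min} asserts that $\chi^\lambda$ is a minimal constituent of $\theta_n$ precisely when there is a minimal set family $\mathcal{P}$ of shape $(2^n)$ and type $\lambda$, while Theorem~\ref{thm:max} (with $\ell=1$, $\nu'=(n)$) asserts that $\chi^\lambda$ is a maximal constituent precisely when there is a minimal multiset family $\mathcal{Q}$ of shape $(2^n)$ and type $\lambda'$. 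The first genuine task is to identify the minimal set families $\mathcal{P}$ of shape $(2^n)$. Such a $\mathcal{P}$ is closed under majorization of $2$-sets; writing each $2$-set as $\{a,b\}$ with $a<b$, the closed families of size $n$ are exactly the down-sets of size $n$ in the poset of $2$-subsets ordered by $\preceq$. I expect to show that the \emph{minimal} ones (those whose type is minimal among down-sets of the same cardinality) are a very restricted list: for a partition $\alpha=(\alpha_1>\alpha_2>\cdots>\alpha_r)$ of $n$ with distinct parts, take $\mathcal{P}_\alpha$ to consist, for each $i$, of the $\alpha_i$ sets $\{i,i+1\},\{i,i+2\},\ldots,\{i,i+\alpha_i\}$ minus those already listed (or more cleanly: the ``staircase'' family $\bigl\{\{i,j\}: 1\le i\le r,\ i<j\le i+\alpha_i\bigr\}$, checking the counts work out). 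I will verify by a direct combinatorial argument that these $\binom{\text{something}}{}$ families are closed, pairwise of distinct type, exhaust the minimal ones, and that the type of $\mathcal{P}_\alpha$ is exactly $2[\alpha]$: counting, the number of sets in $\mathcal{P}_\alpha$ containing the integer $v$ gives $\lambda'_v$, and one reads off that the leading-diagonal hook lengths are $2\alpha_1,\ldots,2\alpha_r$ with $\lambda_i=\alpha_i+i$.

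The next step is the symmetric computation for maximal constituents. A minimal multiset family $\mathcal{Q}$ of shape $(2^n)$ of type $\mu$ corresponds, under conjugation, to a maximal constituent $\chi^{\mu'}$. The multisets of cardinality $2$ are $\{a,a\}$ and $\{a,b\}$; the analogous ``staircase'' multiset family for a distinct-parts partition $\alpha$ is $\mathcal{Q}_\alpha=\bigl\{\{i,j\}:1\le i\le r,\ i\le j\le i+\alpha_i-1\bigr\}$ (now including the diagonal multiset $\{i,i\}$), and I will check it is closed, that these are exactly the minimal multiset families of shape $(2^n)$, and that the type of $\mathcal{Q}_\alpha$ has conjugate $2[\alpha]$ as well — the point being that the operation $\lambda\mapsto\lambda'$ on $2[\alpha]$ fixes the set $\{2\alpha_i\}$ of doubled hook lengths but would in general change the partition, yet here the self-conjugate-like structure of the construction makes the two lists of partitions coincide. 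Thus the set of minimal constituents of $\theta_n$ equals the set of maximal constituents of $\theta_n$, namely $\{\chi^{2[\alpha]}:\alpha\vdash n,\ \alpha\text{ has distinct parts}\}$.

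To finish I must upgrade ``the minimal constituents coincide with the maximal constituents'' to ``\emph{every} constituent appears, each with multiplicity one, and these are all of them.'' The key structural input is that for any character $\psi$ of a symmetric group, if $\lambda$ is maximal in dominance among constituents and $\lambda$ is also minimal, then by definition nothing strictly below and nothing strictly above $\lambda$ (among constituents) exists, but that alone does not pin down multiplicities or force $\psi$ to have no other constituents in an antichain. So the real content is a dimension/degree count: I will compute $\phi^{(2^n)}_{(1^n)}(1)=\binom{2n}{2}\binom{2n-2}{2}\cdots\binom{2}{2}/n!$ — equivalently the number of perfect matchings of $2n$ points, which is $(2n-1)!!$ — and compare it to $\sum_\alpha \chi^{2[\alpha]}(1)=\sum_\alpha f^{2[\alpha]}$, the sum of the numbers of standard Young tableaux over distinct-parts partitions $\alpha$ of $n$. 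The identity $(2n-1)!!=\sum_{\alpha} f^{2[\alpha]}$ is the known RSK-type correspondence between fixed-point-free involutions in $S_{2n}$ and pairs... actually more directly it is the branching/Littlewood identity $\sum_{\lambda:\lambda\text{ has even columns (or the appropriate shape)}} f^\lambda = (2n-1)!!$; I will cite or reprove the precise form, matching the shapes $2[\alpha]$ to the partitions of $2n$ with all parts having the same parity as dictated by the hook structure. Once the degrees match, minimality-cum-maximality of each $2[\alpha]$ forces multiplicity exactly one (a constituent that is simultaneously a minimal and a maximal element of the constituent poset, in a character whose degree is exactly accounted for by the $f^{2[\alpha]}$, cannot have multiplicity $\ge 2$ nor can any further constituent exist), completing the proof.

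The main obstacle I anticipate is the combinatorial classification of the \emph{minimal} closed families (set and multiset) of shape $(2^n)$ and the verification that their types are exactly the partitions $2[\alpha]$: one must show both that the staircase families are minimal and that there are no others, which amounts to an extremal-set-theory argument about down-sets in the majorization poset on $2$-element (multi)sets, together with a careful hook-length bookkeeping to recognize $2[\alpha]$. The degree identity $(2n-1)!!=\sum_{\alpha} f^{2[\alpha]}$ is classical (it is essentially \cite[I.8, Exercise~6]{Macdonald}) and should be quotable rather than an obstacle.
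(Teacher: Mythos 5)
Your route is essentially the paper's: you invoke Theorems~\ref{thm:min} and~\ref{thm:max} with $m=2$, $\nu=(1^n)$, classify the closed (equivalently, down-set) families of $2$-sets and of $2$-multisets of shape $(2^n)$ as the staircase families indexed by distinct-part partitions $\alpha$, compute that their types are $2[\alpha]$ and $2[\alpha]'$ respectively, and conclude that the minimal and maximal constituents coincide and are the $\chi^{2[\alpha]}$. Where you diverge is the finishing step, and here your worry is misplaced: because Theorems~\ref{thm:min} and~\ref{thm:max} are \emph{characterizations} (the ``only if'' directions, via Proposition~\ref{prop:characterToSetFamily} and its multiset analogue), the set of minimal constituents is \emph{exactly} $\{2[\alpha]\}$, and every constituent $\chi^\mu$ dominates some minimal constituent $\chi^{2[\alpha]}$; since that $\chi^{2[\alpha]}$ is also maximal, $\mu = 2[\alpha]$. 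So ``minimal constituents $=$ maximal constituents'' does rule out any further constituents, antichains included --- this is precisely how the paper concludes, with no degree count.

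Your dimension-count alternative is legitimate and in fact buys something the dominance argument does not, namely multiplicity one (the poset argument only pins down the support, and the paper's proof leaves the multiplicity statement to this kind of counting). But be careful with the provenance of the identity $\sum_\alpha f^{2[\alpha]} = (2n-1)!!$: Macdonald \cite[I.8, Exercise~6(d)]{Macdonald} \emph{is} the decomposition you are proving, so citing it would be circular, and the RSK statement you reach for instead ($\sum f^\mu = (2n-1)!!$ over shapes with all columns even) concerns a different set of shapes than the $2[\alpha]$, so it does not apply directly. You would need an independent source or proof of the identity in the stated form (it does follow, for instance, from the classical Littlewood identity for $\prod_{i<j}(1+x_ix_j)$ together with the $\omega$-involution, or from a degree evaluation of that expansion), and this should be made precise; as written, this last step is the one genuine soft spot in an otherwise correct plan.
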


\begin{proof}
By Theorem~\ref{thm:min},
the minimal constituents of $\theta_n$ are given by the types of the minimal set families $\mathcal{P}$ of shape~$(m^n)$.
By Theorem~\ref{thm:max},
the maximal constituents of~$\theta_n$ are given by the conjugates of the types of the minimal multiset 
families~$\mathcal{Q}$ of shape $(m^n)$. The closed set families of shape~$(2^n)$ are
\[\bigcup_{i=1}^r \bigl\{ \{i,i+1\}, \ldots, \{i, i+\alpha_i\} \bigr\}\]
for any $\alpha_1>\alpha_2>\cdots >\alpha_r$ with $\sum_{i=1}^r \alpha_i=n$. Such a set family has type $2[\alpha]$. All such partitions $2[\alpha]$ of $2n$ are incomparable in the dominance order and therefore all label minimal constituents of $\theta_n$. However, $2[\alpha]'$ 
 is the type of the closed multiset family
\[\bigcup_{i=1}^r \bigl\{ \{i,i \}, \ldots, \{i, i+\alpha_i-1\} \bigr\}\]
and hence every minimal constituent is also maximal. We conclude that $\theta_n$ has no further constituents.
\end{proof}

\def\cprime{$'$} \def\Dbar{\leavevmode\lower.6ex\hbox to 0pt{\hskip-.23ex
  \accent"16\hss}D} \def\cprime{$'$}
\providecommand{\bysame}{\leavevmode\hbox to3em{\hrulefill}\thinspace}
\providecommand{\MR}{\relax\ifhmode\unskip\space\fi MR }
\providecommand{\MRhref}[2]{%
  \href{http://www.ams.org/mathscinet-getitem?mr=#1}{#2}
}
\providecommand{\href}[2]{#2}

\end{document}